\numberwithin{equation}{section}
\DeclareFontFamily{U}{mathx}{\hyphenchar\font45}
\DeclareFontShape{U}{mathx}{m}{n}{
      <5> <6> <7> <8> <9> <10>
      <10.95> <12> <14.4> <17.28> <20.74> <24.88>
      mathx10
      }{}
\DeclareSymbolFont{mathx}{U}{mathx}{m}{n}
\DeclareMathAccent{\widecheck}{0}{mathx}{"71}
\title{$C^\ast$-blocks and crossed products for classical $p$-adic groups}
\author{Alexandre Afgoustidis}
\address{CNRS, Institut Élie Cartan de Lorraine, Nancy \& Metz, France}
\email{alexandre.afgoustidis@math.cnrs.fr}
\author{Anne-Marie Aubert}
\address{CNRS, Sorbonne Universit\'e, Universit\'e de Paris, Institut de Math\'ematiques de Jussieu – Paris Rive Gauche, 75005 Paris, France}
\email{anne-marie.aubert@imj-prg.fr}
\begin{document}

\frontmatter
\begin{abstract} Let $G$ be a real or $p$-adic reductive group. We consider the tempered dual of $G$, and its connected components. For real groups, Wassermann proved in 1987, by noncommutative-geometric methods, that each connected component has a simple geometric structure which encodes the reducibility of induced representations. For $p$-adic groups, each connected component of the tempered dual comes with a compact torus equipped with a finite group action, and we prove that a version of Wassermann's theorem holds true under a certain geometric assumption on the structure of stabilizers for that action. We then focus on the case where $G$ is a quasi-split  symplectic, orthogonal or unitary group, and explicitly determine the connected components for which the geometric assumption is satisfied. 
\end{abstract}


\maketitle

\section{Introduction}

\subsection*{The tempered dual and its topology} Let $F$ be a local field of characteristic zero. Let $G$ be the group of $F$-points of a connected reductive algebraic group defined over $F$, and let $\widehat{G}_{\mathrm{temp}}$ denote the tempered dual of $G$. 

Fix a Levi subgroup $M$ of $G$. We shall write $\mathcal{E}_2(M)$ for the set of equivalence classes of {irreducible representations in the discrete series of $M$}, and $\mathcal{X}_u(M)$ for the abelian group of unitary unramified characters of $M$.

Now fix $\sigma \in \mathcal{E}_2(M)$. Write $\mathcal{O} \subset \mathcal{E}_2(M)$ for the orbit of $\sigma$ under $\mathcal{X}_u(M)$: this is the set of equivalence classes of representations of $M$ of the form $\sigma \otimes \chi$, $\chi \in \mathcal{X}_u(M)$.
Let $\Theta$ denote the $G$-conjugacy class of the pair  $(M,\mathcal{O})$.  We will write $\Theta=(M,\mathcal{O})_G$. Then $\Theta$ determines a connected component $\widehat{G}_\Theta$ of $\widehat{G}_{\mathrm{temp}}$: if $P$ is a parabolic subgroup of $G$ with Levi factor $M$, then $\widehat{G}_\Theta$ is the set of irreducible representations of $G$ that occur in one of the induced representations $\mathrm{Ind}_P^G(\sigma \otimes \chi)$,  $\chi \in \mathcal{X}_u(M)$. 

{The unitary dual $\widehat{G}$ of $G$ admits a canonical topology, which is usually not Hausdorff. Its failure to be Hausdorff can be traced to the reducibility of some of  the induced representations mentioned above. }

{A traditional approach to the topology of $\widehat{G}$, and of the tempered dual $\widehat{G}_{\mathrm{temp}}$, accordingly uses noncommutative-geometric methods.  Let $C^\ast_r(G)$ be the reduced $C^\ast$-algebra of $G$. We equip $\widehat{G}_{\mathrm{temp}}$ with the topology obtained by transport of structure from the Jacobson topology on the spectrum of $C^\ast_r(G)$ (see \cite[\S 3.1, \S 18.1.1]{Dixmier}), and $\widehat{G}_\Theta$ with the induced topology. }

{Attached to $\Theta$ is the connected component $\widehat{G}_\Theta$ of $\widehat{G}_{\mathrm{temp}}$, and there is a natural subalgebra $C^\ast_r(G;\Theta)$ of $G$ with spectrum $\widehat{G}_\Theta$ (see \S \ref{c_theta} below, as well as \cite[Theorem 2.5]{Plymen1} and \cite[\S 2]{AATopo2}).}

When $F = \mathbb{R}$,  Wassermann gave in \cite{Wassermann} a complete and strikingly simple determination of $C^\ast_r(G;\Theta)$ up to strong Morita equivalence. It is our intention here to study the possibility that there could exist a related result when $F$ is $p$-adic, to prove such a result under a certain geometric assumption which we shall be describing shortly, and later to assess the validity of that assumption for quasi-split classical groups. 

We first introduce a bit of notation and recall Wassermann's theorem.

Let us attach to $\Theta=(M,\mathcal{O})_G$ a Weyl-like group $W_{\Theta}$, as in \cite[\S 2.4]{Plymen1}. We begin by setting 
\[ N_G(\Theta) = \left\{ \ \dot{w} \in N_G(M) \ | \ \exists \eta \in \mathcal{X}_u(M) \ : \ ^{\dot{w}}\!\sigma \simeq \sigma \otimes \eta \ \right\} \]
where $\sigma \in \mathcal{O}$ and, for $\dot{w} \in N_G(M)$, $^{\dot{w}}\!\sigma$  stands for the representation $m \mapsto \sigma(\dot{w}^{-1}m\dot{w})$ of $M$. We then define
\[ W_{\Theta} = N_G(\Theta)/M.\]
For $\tau \in \mathcal{O}$, we denote by $W_\tau$ the stabilizer of $\tau$ in $N_G(M)/M$:
\[W_\tau= \left\{ \ \dot{w} \in N_G(M) \ | \ \ ^{\dot{w}}\!\tau \simeq \tau \ \right\}/M.\]  
The group $W_{\Theta}$ acts on $\mathcal{O}$ and  $W_\tau$ is the stabilizer of $\tau$ in that action.

\subsection*{Wassermann's theorem} When $F = \mathbb{R}$, the abelian group $\mathcal{X}_u(M)$ is noncompact and contractible: it is isomorphic with the unitary dual $\widehat{A}$ of the split component $A$ of the center of $M$. (This is a consequence of the ``Langlands decomposition'' of $M$ as a direct product $M^0 A$, where $M^0$ is the subgroup of $M$ generated by all compact subgroups.) Moreover, and crucially, the action of $W_{\Theta}$ on $\mathcal{O}$ always admits a fixed point: by an appropriate twist we can find a point of $\mathcal{O}$ that corresponds to a representation $\sigma \in \mathcal{E}_2(M)$ whose restriction to $A$ is trivial, and  then $\sigma$ is a fixed point for the action of $W_\Theta$ on $\mathcal{O}$.  

The reducibility of the induced representation $\mathrm{Ind}_P^G(\sigma)$ can then be studied through the Knapp-Stein theory of intertwining operators, of which more later. Wassermann's result uses the semidirect product decomposition $W_{\Theta}=W_\sigma = W'_\sigma \rtimes R_\sigma$, where $R_\sigma$ is the Knapp-Stein $R$-group and $W'_\sigma$ is the normal subgroup of elements giving rise to scalar intertwining operators. His theorem can be stated as follows.

\begin{theo}[Wassermann] \label{th_wass} Assume $F = \mathbb{R}$. Let $\sigma$ be the above-described fixed point for the action of $W_\Theta$ on $\mathcal{O}$. There exists a strong Morita equivalence
\begin{equation} \label{wass_reel} C^\ast_r(G,\Theta) \underset{\text{Morita}}{\sim} \mathcal{C}_0(\mathcal{O}/W'_{\sigma}) \rtimes R_{\sigma},\end{equation}
 where $\mathcal{C}_0(\dots)$ denotes the space of continuous functions that vanish at infinity, and $\rtimes$ denotes the $C^\ast$-algebraic crossed-product. \end{theo}

An immediate corollary is that the connected component $\widehat G_\Theta$ is homeomorphic with the ``spectral extended quotient'' $\left[(\mathcal{O}/W'_\sigma)//R_\sigma\right]_{\mathrm{spec}}$ of \cite{ABPSTakagi}. (See \cite{ABPSTakagi} for spectral extended quotients, \cite[\S 4]{EchterhoffEmerson} for a simple description of the topology on the right-hand side{, and \cite[Chapter 2, Appendix A]{Connes} for the fact  that Morita-equivalent $C^\ast$-algebras have homeomorphic spectra).} We note that, in this formula, we have a rare instance of the co-existence of two different \emph{quotients}: we have the spectral extended quotient by $R_\sigma$ of the ordinary quotient  of $\mathcal{O}$ by $W'_\sigma$. This has significant consequences, and encodes at the same time:
\begin{enumerate}[(i)]
\item a parametrization of the irreducible factors of  $\mathrm{Ind}_{P}^G(\tau)$ for $\tau \in \mathcal{O}$: they are indexed by (irreducible representations of) the $R$-group\footnote{\label{footnote_tau} Note that $R_{\tau}$ arises, on the right-hand side of \eqref{wass_reel}, as the stabilizer in $R_\sigma$ of the coset~$\tau W'_{\sigma}$.} $R_\tau$,
\item and a determination of the Fell topology on $\widehat G_\Theta$, with the information that the subtle phenomena entailed by the reducibility of induced representations fit into a simple geometric structure.  The multiple points in the (possibly) non-Hausdorff topology correspond exactly to the number of irreducible constituents in the corresponding induced representations.
\end{enumerate}
Since the result is true for all components $\widehat G_\Theta$, Wassermann's theorem is a noncommutative-geometric way to encode both a parametrization of $\widehat{G}_{\mathrm{temp}}$ (conditional on the determination of the discrete series of Levi subgroups), and a simple description of its topology.

\subsection*{The $p$-adic case: previous work} When $F$ is $p$-adic,  the group $\mathcal{X}_u(M)$ is a compact torus, and no longer isomorphic with $\widehat{A}$ in general. The finite group $W_{\Theta}$ acts on the torus $\mathcal{O}$.

In Wassermann's proof of Theorem \ref{th_wass}, an important role is played by the fact that $\mathcal{X}_u(M) \simeq \widehat{A}$  is $W_{\Theta}$-equivariantly contractible. This is of course no longer true for $p$-adic groups, and makes it seem challenging to prove general results about the structure of $C^\ast_r(G)$ up to Morita equivalence. 

In spite of this difference, Roger Plymen and his students have identified, from the 1990s onwards, a number of examples in which $p$-adic analogues of Wassermann's theorem hold true. 

Plymen proved in 1990 \cite{Plymen1} that if the induced representations $\mathrm{Ind}_{P}^{G}(\tau)$, $\tau \in \mathcal{O}$, are all irreducible, then $C^\ast(G, \Theta)$ is Morita-equivalent with $\mathcal{C}(\mathcal{O}/W_{\Theta})$ $-$ ensuring, in particular, that the connected component $\widehat G_\Theta$ is Hausdorff in that case. That result gives, in particular, a complete description of the reduced $C^\ast$-algebra of $\mathrm{GL}(n,F)$ up to Morita equivalence. 

When $G$ is a $p$-adic Chevalley group, Plymen and his students gave several examples of Levi subgroups $M$ and representations $\sigma\in \mathcal{E}_2(M)$ for which ${W_{\Theta}} = R_\sigma$, for which  $W'_{\sigma}$ is accordingly trivial, and for which $C^\ast_r(G,\Theta)$ is Morita-equivalent with $\mathcal{C}(\mathcal{O})\rtimes R_\sigma$. See \cite{LeungPlymen} for certain principal series representations of $p$-adic Chevalley groups, \cite{JawdatPlymen} for certain elliptic representations of $\mathrm{SL}(n,F)$, and \cite{ChaoPlymen} for a remarkable example in $\mathrm{SL}(4,F)$. 

More recently, Opdam and Solleveld proved in \cite{OpdamSolleveld} that a \emph{localized version} of Wassermann's result holds true in a much more general setting. They proved that a Morita equivalence similar to that of Wassermann always holds when one restricts everything, using localized algebras and germs, to a neighborhood of $\sigma$ in $\mathcal{O}$. Their result necessitates, quite impressively in view of other studies, no hypothesis whatsoever on the $p$-adic group $G$ or on the Levi subgroup $M$ and discrete series representation $\sigma \in \mathcal{E}_2(M)$. Their work brings in, alongside other (and deeper) ingredients, the use of central extensions of the $R$-group to which we will come back later. 

Combining the very general (but local) work of Opdam and Solleveld with the very special (but non-local) examples of Plymen and his collaborators, it is tempting to speculate that Wassermann's theorem, and the attached geometric structure on the connected components of the tempered dual, may hold  for $p$-adic groups in a number of general situations. This hope can be connected with the conjectures of \cite{ABPSTakagi} on the structure of the admissible dual, which postulate the existence of a simple geometric structure governing the questions of reducibility for representations induced from supercuspidal representations of Levi subgroups, although a precise relationship is far from clear.

\subsection*{Our results} We will prove that a version of Wassermann's theorem holds under certain natural assumptions on the action of $W_{\Theta}$ on $\mathcal{O}$, and later study these assumptions in concrete examples. The first hypothesis is the existence of a fixed point for all of $W_{\Theta}$: 

\begin{enonce}{Assumption} \label{hyp_pt_fixe} The action of $W_{\Theta}$ on $\mathcal{O}$ has a fixed point. \end{enonce}

We shall of course discuss, in Part \ref{part2} of this paper and at the end of this Introduction, the plausibility of that assumption. For the moment, let us state our findings about the structure of $C^\ast_r(G; \Theta)$ when Assumption \ref{hyp_pt_fixe} is satisfied.

We may as well assume that $\sigma$ is itself  a fixed point. So we shall now assume that 
\begin{equation} \label{hyp} W_{\Theta} \cdot \sigma = \sigma. \end{equation} 

The Knapp-Stein theory of the $R$-group can be called in as before, and we will use the decomposition $W_{\sigma}=W'_{\sigma} \rtimes R_{\sigma}$ alluded to above (see \S \ref{r_groupe} for details). 

A moment's pause on Theorem \ref{th_wass} reveals that any analogous statement can only be true under a very strong compatibility condition on the $R$-groups $R_\tau$ attached to various points of the orbit $\mathcal{O}$. Indeed, an immediate consequence of point (i) in our discussion of real groups is that Wassermann's result can be true only if the $R$-group $R_\sigma$ attached to the fixed point $\sigma$ can ``see'' all the $R$-groups $R_\tau$, $\tau \in \mathcal{O}$, as subgroups (see the footnote on page \pageref{footnote_tau}). As we shall see in Part \ref{part2}, for $p$-adic groups, it is easy to construct examples where such a favorable situation cannot happen. We shall therefore need a strong additional assumption on the structure of stabilizers $W_{\tau}$, $\tau \in \mathcal{O}$:

\begin{enonce}{Assumption} \label{hyp_inclusions} If $\sigma$ is a fixed point as in \eqref{hyp}, then for every point $\tau \in \mathcal{O}$, the Knapp-Stein decompositions $W_{\tau}=W'_{\tau} \rtimes R_{\tau}$ and $W_{\sigma}=W'_{\sigma} \rtimes R_{\sigma}$ are compatible in the following sense: 
\begin{enumerate}[(a)]
\item we have $W'_\tau \subset W'_\sigma$,
\item and the $R$-group $R_\tau$ is isomorphic with a subgroup of $R_\sigma$.\qedhere
\end{enumerate}\end{enonce}

What we shall prove is that a version of Wassermann's theorem holds true if there exists a fixed point, as in Assumption \ref{hyp_pt_fixe}, that is ``good'' in the sense of Assumption \ref{hyp_inclusions}. We shall also provide examples (for $p$-adic groups) which indicate that one should not expect a reasonable analogue of Wassermann's result  to be true unless Assumptions \ref{hyp_pt_fixe} and \ref{hyp_inclusions} are both satisfied.

{We should mention two facts which can complicate the use of $R_\sigma$  in the $p$-adic case. The first complication is that $R_\sigma$ can be nonabelian when $F$ is $p$-adic  \cite{Keys}, whereas $R_\sigma$ is always an elementary abelian $2$-group when $F = \mathbb{R}$.}

{The second complication, which will play an important role in this paper, is a ``cocycle problem''. Let $\mathcal{H}$ be a carrier space for the induced representation $\mathrm{Ind}_P^G(\sigma)$. For every nontrivial $r$ in $R_\sigma$, the Knapp-Stein theory furnishes a non-scalar self-intertwining operator $R(r, \sigma)\colon \mathcal{H} \to \mathcal{H}$. The map $r \mapsto R(r, \sigma)$ defines a projective representation of $R_\sigma$ on $\mathcal{H}$. The second complication for using the $R$-group in the $p$-adic case is that, in contrast to the situation for real groups, the projective representation cannot always be linearized: attached to it is a $2$-cocycle of $R_\sigma$ which does not always split.}

Arthur has found a way out of this difficulty in  \cite[\S 2]{ArthurActa}: there is a certain central extension $\widetilde{R}_{\sigma}$ of $R_\sigma$ over which the $2$-cocycle splits, and it is often convenient to work with  $\widetilde{R}_{\sigma}$ instead of $R_\sigma$ $-$ at the cost of twisting some of the objects that appear in the theory.

Drawing on previous work of Leung and Plymen \cite{LeungPlymen}, Opdam and Solleveld \cite{OpdamSolleveld}, we prove that under Assumptions \ref{hyp_pt_fixe} and \ref{hyp_inclusions}, Wassermann's theorem holds true after a slight twisting. Recall that the central extension $\widetilde{R}_{\sigma} \to R_\sigma$ comes with a central idempotent $\widetilde{p}$ acting on the group algebra of $\widetilde{R}_{\sigma}$ (see for instance \cite[\S 2.1]{ABPSOrsay}). We can then form the twisted crossed-product algebra $\mathcal{C}(\mathcal{O}/W'_{\sigma}) \rtimes \widetilde{R}_{\sigma}$, and $\widetilde{p}$ defines a central idempotent in it (see \cite{OpdamSolleveld}).

\begin{theo}\label{principal} Assume $F$ is a local field of characteristic zero, and $\sigma$ is a fixed point for the action of $W_\Theta$ on $\mathcal{O}$ (Assumption \ref{hyp_pt_fixe}) satisfying Assumption \ref{hyp_inclusions}. Then we have the strong Morita equivalence
\[ C^\ast_r(G, \Theta)  \underset{\text{Morita}}{\sim} \widetilde{p} \left[\ \mathcal{C}_0(\mathcal{O}/W'_{\sigma}) \rtimes \widetilde{R}_{\sigma}\ \right].\qedhere\]
 \end{theo}

As before, the theorem encodes both a parametrization of $\widehat{G}_\Theta$ (see e.g. \cite[Lemma 2.3]{ABPSOrsay}) and a description of its topology: it identifies $\widehat{G}_\Theta$ with a connected component of the spectral extended quotient $\left[(\mathcal{O}/W'_\sigma)//\widetilde{R}_\sigma\right]_{\mathrm{spec}}$, with the topology induced from that described in \cite[\S 4]{EchterhoffEmerson}.

Our way to Theorem \ref{principal} is one that has been precisely paved by Leung and Plymen in \cite[\S 2-3]{LeungPlymen}, using Plymen's earlier work on the general structure of the reduced $C^\ast$-algebra of $G$ and the role of standard intertwining operators to describe the reducibility of induced representations.

When specialized to $F=\mathbb{R}$, the central extension step is trivial, and we shall see that $\sigma$ may always be chosen so as to satisfy the two assumptions. Then  Theorem \ref{principal} specializes to Wassermann's theorem. Our proof will in fact fill in the details of Wassermann's announcement \cite{Wassermann}.

When specialized to $p$-adic $F$, Theorem \ref{principal} admits as special cases the known results \cite{Plymen1, LeungPlymen, JawdatPlymen, ChaoPlymen} described in the previous paragraph. To the best of our knowledge, all existing results for $C^\ast$-blocks of $p$-adic groups have the property that the Morita equivalence of Theorem \ref{principal} has either $W'_\sigma$ or $R_\sigma$  trivial (and that, in addition, the central extension $\widetilde{R}_\sigma$ of $R_\sigma$ is trivial). For classical $p$-adic groups, we shall see in Part \ref{part2} how to construct infinite families of pairs $(M, \sigma)$ for which Theorem \ref{principal} holds, and which have both  $W'_\sigma$ and $R_\sigma$ nontrivial (see the end of this Introduction for a family of examples in $\mathrm{Sp}(2n)$).

Let us give a quick indication of our strategy for Theorem \ref{principal}. An important ingredient in  our application of the Leung--Plymen method is the fact that the intertwining operators can be normalized so as to satisfy a certain twisted cocycle relation. The normalization which we use here is due to Langlands \cite{Langlands} and Arthur \cite{ArthurActa}, and does not use the $L$-functions of \cite{Shahidi}. We shall devote \S \ref{norma} to recalling the necessary facts on intertwining operators and proving the twisted cocycle relation, which is a simple consequence of remarks by Arthur \cite{ArthurActa} and Goldberg-Herb \cite{GoldbergHerb}. To apply the methods of \cite{LeungPlymen}, the complications related to central extensions of the $R$-groups make it necessary to straighten the operators so that they satisfy a genuine cocycle relation: it is only in \S \ref{r_groupe}, after recalling well-known facts on $R$-groups and Arthur's central extensions, that we will introduce the operators to which we will eventually apply the Leung--Plymen method. 

Once this is done, we shall obtain Theorem \ref{principal} by a rather direct application of the results in \cite{LeungPlymen}, suitably adapted (in the $p$-adic case) by using ideas of Opdam and Solleveld \cite{OpdamSolleveld} concerning some of the algebras that appear in the discussion. We shall devote \S \ref{fin_preuve} to concluding the proof.

\subsection*{On the existence of fixed points and good fixed points} Of course the scope of Theorem \ref{principal} drastically depends on the possibility that Assumption \ref{hyp_pt_fixe} and \ref{hyp_inclusions} could be satisfied in a number of interesting cases. In Part \ref{part2} of this paper, we consider concrete examples of groups $G$, Levi subgroups $M$ and discrete series representations $\sigma \in \mathcal{E}_2(M)$, and discuss the validity of Assumptions \ref{hyp_pt_fixe} and \ref{hyp_inclusions}. 

We shall mainly be concerned with the quasi-split classical groups (meaning symplectic, orthogonal or unitary groups), for then the Levi subgroups and the general shape of their discrete series representations are easily described. In addition, much about the reducibility of induced representations is known from the work of David Goldberg \cite{GoldbergSPN, GoldbergSLN, GoldbergUN}. We shall heavily borrow from his papers to describe the Weyl groups attached to the various Levi subgroups and connected components of $\widehat{G}_{\mathrm{temp}}$, and to perform the  $R$-group calculations necessary to study Assumption \ref{hyp_inclusions}. The classical groups which we will discuss have the further advantage that all $R$-groups are {abelian. In fact, Goldberg proves in  \cite{GoldbergSPN, GoldbergSLN, GoldbergUN} that for the $p$-adic groups we will consider in Part \ref{part2}, all $R$-groups are elementary abelian $2$-groups, just as happens for all real groups. This conveniently excludes more subtle kinds of $R$-groups that one can encounter in the $p$-adic case (and, first and foremost, for $\mathrm{SL}(n,F)$).}

The first result of Part \ref{part2}, to be established in \S \ref{exist_pt_fixe}, is that Assumption \ref{hyp_pt_fixe} turns out to be vacuous for the classical groups under discussion:
\begin{theo} \label{th_pt_fixe} Let $G$ be a quasi-split symplectic, orthogonal or unitary group. Then for every Levi subgroup $M$ of $G$ and for every discrete series representation $\sigma \in \mathcal{E}_2(M)$, the action of $W_{\Theta}$ on $\mathcal{O}$ has a fixed point. \end{theo}

However, we shall see that Assumption \ref{hyp_inclusions} is quite strong, and far from being always satisfied. We will point out in \S \ref{iwahori} that it already fails for the Iwahori-spherical block of the groups under discussion (as it happens, condition (b) also fails for the Iwahori-spherical block of $\mathrm{SL}(n,F)$: this was pointed out to us by Maarten Solleveld). For those components $\Theta$ for which Assumption \ref{hyp_inclusions} fails, we will in fact point out phenomena which make it unlikely that the structure of $C^\ast_r(G,\Theta)$ cannot be given, up to Morita equivalence, by any simple crossed product of the kind exhibited in good cases by Theorem \ref{principal}.

We shall devote \S \ref{bons_pts_fixes} to determining the pairs $(M,\sigma)$ for which Assumption \ref{hyp_inclusions} is satisfied, concluding in Theorem \ref{conclusion_inclu} with a necessary and sufficient condition in terms of the reducibility criteria on the various constituents of $\sigma$. These criteria are due to Goldberg \cite{GoldbergSPN} and ultimately depend on an analysis of the poles of $L$-functions (see the work of Shahidi \cite{Shahidi, ShahidiDuke} and Goldberg-Shahidi \cite{GoldbergShahidi}). 

For the moment, let us simply give an idea of our results, and therefore of the scope of Theorem \ref{principal}, for a simple family of examples. Consider $G=\mathrm{Sp}(2n)$, fix a Levi subgroup  $M$ isomorphic with $\mathrm{GL}(n_1) \times \dots \times \mathrm{GL}(n_r)$, assume that all integers $n_i$ are odd, and assume that $\sigma = \sigma_1 \otimes \dots \otimes \sigma_r$ is supercuspidal. Then

\begin{itemize}
\item Conditions (a) and (b) of Assumption \ref{hyp_inclusions} hold when there is no equivalence among those constituents $\sigma_i$ that are self-dual. For instance, assume that $\sigma_1, \dots, \sigma_k$ are inequivalent self-dual representations, and that $\sigma_{k+1} \simeq \dots \simeq \sigma_r$ are equivalent non-self-dual representations. Then conditions (a) and (b) hold, and we have $R_\sigma = (\mathbb{Z}/2\mathbb{Z})^k$ and $W'_{\sigma} = \mathfrak{S}_{r-k}$ (the symmetric group on $r-k$ letters). This provides infinite families of components $\Theta$ where Theorem \ref{principal} applies, and where both $R$ and $W'$ are non-trivial (and explore all possible $R$-groups). 

\item However, conditions (a) and (b) fail as soon as two among the self-dual $\sigma_i$ are equivalent. As we shall see, a consequence which perhaps gives an indication of the strength of (a)-(b) is that Theorem \ref{principal} can hold only in situations where the semidirect product $W_\sigma = W'_\sigma \rtimes R_\sigma$ is in fact a direct product. 
\end{itemize}

Let us conclude this Introduction with a few words about those blocks $C^\ast_r(G; \Theta)$ for which Assumption \ref{hyp_inclusions} fails. We cannot hope, for those, that a  simple crossed-product statement can elucidate the structure of $C^\ast_r(G;\Theta)$ as in Theorem \ref{principal}. It should be noted, however, that a related crossed-product-like statement is expected, and known in certain cases, to exist at the level of $K$-theory. Each component $C^\ast_r(G;\Theta)$ lies in a ``Bernstein block'' $C^\ast_r(G,\mathfrak{s})$, which comes with its own compact torus $T_{\mathfrak{s}}$ and its own Weyl group $W_{\mathfrak{s}}$ (see \cite[\S 2.2]{ABPSOrsay}). The $K$-theory of $C^\ast_r(G,\mathfrak{s})$ is then conjectured in \cite[Conjecture 5]{ABPSOrsay} to be isomorphic with (a twisted version of) $K_{W_{\mathfrak{s}}}(T_\mathfrak{s})$, the equivariant $K$-theory of the torus $T_{\mathfrak{s}}$ with respect to the finite group $W_{\mathfrak{s}}$. This can be viewed as a cohomological counterpart of Theorem \ref{principal}, and has the feel of a crossed product statement, albeit for the Bernstein component $C^\ast_r(G,\mathfrak{s})$, which is a union of blocks $C^\ast_r(G;\Theta)$, rather than for the individual blocks $C^\ast_r(G;\Theta)$. The $K$-theory statement has been shown by Solleveld (by a combination of results in \cite{Solleveld1, Solleveld2, Solleveld3}) to be valid in many cases where Theorem \ref{principal} badly fails, including the Iwahori-spherical block for split groups. For the spherical principal series of $\mathrm{SL}(n,F)$, Kamran and Plymen studied in \cite{KamranPlymen} the relationship with $C^\ast$-blocks. The related extended quotients are computed quite explicitly in \cite{NPW} for $\mathrm{SL}(n,F)$.

It would probably be enlightening to know if traces of that structure can be detected on the components $C^\ast_r(G;\Theta)$ for which Theorem \ref{principal} fails, in spite of the absence of an obvious analogue of the geometric structure valid for all components in the case of real groups, and for the components here identified in the $p$-adic case.

\subsection*{Acknowledgements} {We are deeply grateful to Roger Plymen and Maarten Solleveld, whose comments, corrections and suggestions were extremely helpful in preparing the current manuscript. We are also much indebted to the referees for providing very constructive feedback, which led to significant improvements and corrections. }

\part{Crossed-product structure in the presence of a good fixed point}\label{part1}

We begin with a local field $F$ of characteristic zero and a reductive group $G$ over $F$, and take up the notation of the Introduction. Along our way towards a proof of Theorem \ref{principal}, we shall keep $F$ arbitrary (real or $p$-adic) for as long as possible. It is only at the very end, in \S \ref{preuve_wass} and \S \ref{preuve_padique}, that we shall specialize either to $F = \mathbb{R}$ (thereby filling in the details of Wassermann's Theorem \ref{th_wass}) or to $p$-adic $F$ (where our main Theorem \ref{principal} is really new). 

\section{Normalized intertwining operators and cocycle relations}\label{norma}

We here collect well-known material concerning the standard intertwining operators between induced representations, and a normalization of these intertwining operators due to Langlands \cite{Langlands} and Arthur \cite{ArthurActa}. We shall point out that the Langlands-Arthur normalization leads to a ``twisted'' cocycle relation (see Proposition \ref{cocycle_twiste}). Many of our remarks follow the detailed discussion of Goldberg and Herb \cite[pp. 114-123]{GoldbergHerb}.

\subsection{Intertwining operators and the Langlands-Arthur normalization} \label{induites}

\paragraph*{Induced representations}

{Consider a Levi subgroup $M$ of $G$, and a discrete series representation $\sigma$ of $M$: the representation $\sigma$ is irreducible, unitary, and all its matrix elements are square-integrable modulo the center of $M$. } Henceforth we shall fix a carrier space $V$ for $\sigma$. Whenever $P=MN$ is a parabolic subgroup of $G$ with Levi factor $M$, we write $\delta_P\colon P \to \mathbb{R}^+ $ for the modular function of $P$ (see \cite[\S II.3.7 and \S V.3.4]{Renard}). {We write $\mathcal{X}(M)$ for the set of unramified characters of $M$, and $\mathcal{X}_u(M)$ for the subset of unitary unramified characters.} For every $\chi \in \mathcal{X}_u(M)$, we introduce the space {$E_{P}(\sigma \otimes \chi)$ of {functions $f\colon G \to V$ that are right-invariant under some compact open subgroup of $G$ if $F$ is $p$-adic, that are smooth if $F = \mathbb{R}$, and which satisfy} 
\[ f(xmn)=\delta^{-\frac{1}{2}}_P(m)(\sigma\otimes \chi)(m)^{-1}f(x), \text{ \quad for $x \in G$, $m \in M$, $n \in N$}. \]
{We consider the representation $\mathcal{I}_P(\sigma \otimes \chi)$ of $G$ on $E_{P}(\sigma \otimes \chi)$ by left translation.}

{Let $K$ be a maximal compact  subgroup of $G$, equipped with the Haar measure of total mass one. For $p$-adic $F$, we may and will assume that $K$ is in fact a good special maximal compact subgroup and that $K$ and $M$ are in ``good relative position'' (see \cite[\S V.5.1]{Renard}). For every parabolic subgroup $P$ of $G$ with Levi factor $M$, we then have $G=KP$. We denote by $\mathcal{H}_P(\sigma \otimes \chi)$ for the Hilbert space completion of $E_P(\sigma \otimes \chi)$ with respect to the inner product $\langle f, f'\rangle = \int_K \langle f(k), f'(k) \rangle_{{V}}dk$. We still write $\mathcal{I}_P(\sigma \otimes \chi)$ for the representation of $G$ on $\mathcal{H}_{P}(\sigma \otimes \chi)$.}

We shall need the realization of induced representations in the ``compact picture''. {Let $E^K_P$ be the space of functions $f: K \to V$ that are right-invariant under some open subgroup of $K$ if $F$ is $p$-adic, that are  smooth if $F=\mathbb{R}$, and which satisfy }
\begin{equation} \label{h_k_p} f(kmn)=\sigma(m)^{-1}f(k), \quad \text{ for $k \in K$, $m \in M\cap K$, $n \in N\cap K$}.\end{equation}
Let us consider the Hilbert completion $\mathcal{H}^K_P$ of  $E^K_P$ for the inner product above. The representation of $G$ on $\mathcal{H}^K_P$ that we will consider is the one obtained by transferring the above representation on $ \mathcal{H}_{P}(\sigma \otimes \chi)$ through the linear isomorphism }
\[ F_{P}^K(\chi)\colon \mathcal{H}_{P}(\sigma \otimes \chi) \to \mathcal{H}^K_P\]
where, given $f \in \mathcal{H}_{P}(\sigma \otimes \chi)$, the map $F_P^K(\chi) f$ is just the restriction of $f$ to $K$. 

The inverse map $F_{P}^K(\chi)^{-1}\colon  \mathcal{H}^K_P \to  \mathcal{H}_{P}(\sigma \otimes \chi)$ is specified by the formula
\[ \forall \varphi \in \mathcal{H}^K_P, \quad \forall x \in G, \quad \left[F_{P}^K(\chi)^{-1}\varphi\right](x)=\delta_P^{-\frac{1}{2}}(\mu(x)) (\sigma \otimes \chi)(\mu(x))^{-1} \varphi(\kappa(x))\]
where, for $x \in G=KMN$, we isolate $\kappa(x) \in K$ and $\mu(x) \in M$ so that $x \in \kappa(x) \mu(x) N$. As a result, the representation $\mathcal{I}_{P}^K(\sigma \otimes \chi)$ of $G$ on $\mathcal{H}^K_P$ has $g \in G$ act on $\mathcal{H}^K_P$ through 
\[ \mathcal{I}_{P}^K(\sigma \otimes \chi)(g)= F^K_P(\chi) \mathcal{I}_P(\sigma \otimes \chi)(g) F^{K}_P(\chi)^{-1}.\]

\paragraph*{Intertwining operators}

When $P=MN$ and $P'=MN'$ are two parabolic subgroups of $G$ with Levi factor $M$, we can consider the standard intertwining operator
\[ J_{P' | P}(\sigma \otimes \chi)  \colon  \mathcal{H}_{P}(\sigma \otimes \chi)  \to   \mathcal{H}_{P'}(\sigma \otimes \chi).\]
{When $\chi$ is in a certain region of $\mathcal{X}(M)$, the definition is} 
\begin{equation} \label{formule_integrale}  J_{P' | P}(\sigma \otimes \chi)\colon f \mapsto \left[ x \mapsto \displaystyle \int_{\overline{N} \cap N'} f(x\overline{n})d\overline{n}\right] \end{equation}
where $M\overline{N}$ is the parabolic opposite to $MN$, and $d\overline{n}$ is the normalized Haar measure over $\overline{N} \cap N'$. {For more general $\chi$, the map $\chi \mapsto J_{P' | P}(\sigma \otimes \chi)$  is  obtained by analytic continuation of the above formula to a rational function $\chi \mapsto J_{P' | P}(\sigma \otimes \chi)$. }

We can use the isomorphism $F_{P}^K(\chi)$ to transfer $J_{P' | P}(\sigma \otimes \chi)$ to an operator acting on $\mathcal{H}^K_P$, setting
\[\begin{array}{ccccc}
J^K_{P'|P}(\sigma \otimes \chi) & : & \mathcal{H}_{P}^K& \to &  \mathcal{H}_{P'}^K \\ & & \varphi & \mapsto & \left[ k \mapsto \displaystyle \int_{\overline{N} \cap N'} \delta_P^{-1/2}(\mu(k\overline{n})) (\sigma \otimes \chi)^{-1}(\mu(k\overline{n})) \varphi(\kappa(k\overline{n}))d\overline{n}\right].
\end{array}
\]

\paragraph*{Normalization}

If $P=MN$ and $P'=MN'$ are two parabolic subgroups with Levi factor $M$ as above,  we recall that Langlands and Arthur introduced scalar normalizing factors 
\[ r_{P'|P}(\sigma \otimes \chi), \quad \chi \in \mathcal{X}_u(M)\]
(see \cite[Theorem 2.1]{ArthurJFA}). We shall use the normalized operators
\begin{align} \label{norm_std} 
R_{P'|P}(\sigma \otimes \chi) & : & \mathcal{H}_{P}(\sigma \otimes \chi) & \to   \mathcal{H}_{P'}(\sigma \otimes \chi)\\ 
& & f & \mapsto    r_{P'|P}(\sigma \otimes \chi)^{-1} J_{P'|P}(\sigma \otimes \chi) f \nonumber
\end{align}
and their ``compact picture'' counterparts
\begin{align} \label{norm_int}
R^K_{P'|P}(\sigma \otimes \chi) & : & \mathcal{H}_{P}^K &  \to   \mathcal{H}^K_{P'}\\
& & \varphi & \mapsto    r_{P'|P}(\sigma \otimes \chi)^{-1} J^K_{P'|P}(\sigma \otimes \chi) \varphi.\nonumber
\end{align}
The two are related through 
\begin{equation} \label{retour_compact} R^K_{P'|P}(\sigma \otimes \chi) = F_{P'}^K(\chi) \circ R_{P'|P}(\sigma \otimes \chi) \circ F_{P}^K(\chi)^{-1}.\end{equation}
{These normalized operators satisfy the conditions of \cite[\S 2]{ArthurJFA}, and we will use the results of  \cite[\S 2]{ArthurJFA} and  \cite[\S 2]{ArthurActa}. The map $\chi \mapsto R^K_{P'|P}$ is holomorphic on $\mathcal{X}_u(M)$ (see e.g. \cite[\S 1.5]{WaldspurgerGGP2}). }

\subsection{The operators $B_P(w)$ and the conjugation relation with the intertwining operators}\label{subsec:reverting}

Recall that we are working under the assumption that $\sigma$ is a fixed point for the action of $W_{\Theta}$ on the orbit $\mathcal{O}$. For each $w \in W_\Theta$ and every $\chi \in \mathcal{X}_u(M)$, we can use the intertwining operators in \eqref{norm_int} to obtain an intertwining operator between $\mathcal{I}_P^K(\sigma \otimes \chi)$ and $\mathcal{I}_{w^{-1}Pw}^K(\sigma \otimes (w\chi))$. In order to take up the Leung--Plymen approach to Wassermann's theorem in \S \ref{fin_preuve}, we will need to convert \eqref{norm_int} into self-intertwining operators, and we recall the classical way to do so (see \cite[\S 2]{ArthurActa} and \cite[\S 4]{GoldbergHerb}).

For each $w \in W_\Theta$, we fix a representative $\dot{w}$ of $w$ in $N_G(M) \cap K$. The fact that the representative can be chosen in $K$ comes from the fact that $K$ and $M$ are in good relative position. Since we have assumed $\sigma$ to be a fixed point, the representation $^{\dot{w}} \sigma: m \to \sigma(\dot{w}^{-1} m w)$ of $M$ is unitarily equivalent with $\sigma$. Therefore, there exists a unitary operator $T_{\dot{w}}: V \to V$ which satisfies
\[ T_{\dot{w}} \ \sigma(\dot{w}^{-1} m \dot{w}) = \sigma(m) \ T_{\dot{w}} \quad \text{ for all $m \in M$}.\]
We then introduce an operator $\sigma^N(\dot{w}): V \to V$, setting 
\begin{equation} \label{sigma_N} \sigma^N(\dot{w}) = T_{\dot{w}}.\end{equation}
We further define $\sigma^N(m)=\sigma(m)$ for all $m$ in $M$, thereby obtaining a map
\begin{equation} \label{rep_projective} \sigma^N: \quad N_G(\Theta) \to \mathrm{End}(V),\end{equation}
where $N_G(\Theta)$ is the subgroup of $N_G(M)$ defined in the first paragraph of the Introduction.
In general, this map does \emph{not} define a representation of $N_G(\Theta)$. But it always defines a projective representation of $W_\Theta$. Indeed, whenever $w_1$ and $w_2$ are elements of $W_\Theta$, the operator $\sigma^N(\dot{w}_1 \dot{w}_2) \sigma^N(\dot{w}_1) \sigma^N(\dot{w}_2)$ is a self-intertwiner for $\sigma$. Since $\sigma$ is assumed to be irreducible, we obtain 
 \begin{equation} \label{schur} \sigma^N(\dot{w}_1\dot{w}_2) = \eta_\sigma(w_1,w_2) \sigma^N(\dot{w}_1) \sigma^N(\dot{w}_2)\end{equation}
where $\eta_\sigma(w_1,w_2)$ is a complex scalar, which depends only on the elements $w_1$, $w_2$ and not on the choices of representatives $\dot{w}_1$, $\dot{w}_2$. As an immediate consequence of \eqref{schur}, we know that the map $\eta_\sigma\colon W_\Theta \times W_\Theta \to \mathbb{C}^\times$ is a $2$-cocycle.

\begin{rema} If the operators  $T_{\dot{w}}$, for $w \in W_\Theta$, can be chosen in such a way that $\sigma^N$ defines a genuine representation of $N_G(\Theta)$, then $\eta_\sigma$ is trivial. This is always the case when $G$ is a quasi-split classical group, as the reader will easily check from the explicit description of Levi subgroups and of groups $W_\Theta$ in Part II. See the remarks in \cite[\S 1.5]{WaldspurgerGGP2}. \end{rema}

Let us now see how the cocycle $\eta_\sigma$ enters the discussion of standard intertwining operators. For every parabolic subgroup $P$ with Levi factor $M$, and for every element $w$ in $W_{\Theta}$, we introduce 
 

%
%
\begin{equation} \label{b_p}\begin{array}{ccccc}
B_P(w) & : & \mathcal{H}^K_{w^{-1}Pw} & \to &  \mathcal{H}^K_{P}  \\ & & \varphi & \mapsto & \left[ k \mapsto \sigma^N(\dot{w}) \varphi(\dot{w}^{-1} k)\right].
\end{array}
\end{equation}

This operator does not depend on the choice of representative $\dot{w}$ for $w$, because of the transformation law \eqref{h_k_p} for elements $\varphi \in \mathcal{H}^K_{P}$. Furthermore, for every $\chi  \in \mathcal{X}_u(M)$, the operator $B_P(w)$ intertwines  $\mathcal{I}^K_{w^{-1}Pw}(\sigma \otimes \chi)$ and $\mathcal{I}^K_{P}\left(\sigma \otimes(w \chi)\right)$.

We can summarize the above discussion in the following statement:

\begin{lemm} \label{retour} The relation $B_P(w_1w_2) = \eta_\sigma(w_1,w_2) B_P(w_1) B_{w_1^{-1}Pw_1}(w_2)$ holds for every $w_1$, $w_2$ in $W_\Theta$. The map $\eta_\sigma: W_\Theta \times W_\Theta \to \mathbb{C}^\times$, introduced in \eqref{schur}, is a $2$-cocycle of the finite group $W_{\Theta}$.\end{lemm}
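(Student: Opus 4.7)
My plan is to verify both assertions by direct calculation from the definitions \eqref{b_p} of $B_P(w)$ and \eqref{schur} of $\eta_\sigma$, using throughout that $B_P(w)$ does not depend on the choice of representative $\dot{w}$ (as noted after \eqref{b_p}). In particular, given $w_1, w_2 \in W_\Theta$, the product $\dot{w}_1\dot{w}_2$ lies in $N_G(M) \cap K$ and is therefore a valid representative of $w_1w_2$ when computing $B_P(w_1w_2)$; this is what will allow \eqref{schur} to produce the scalar $\eta_\sigma(w_1,w_2)$ cleanly rather than an extra factor coming from a mismatch of representatives.

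With that in hand, I would prove the cocycle relation by straight expansion. For $\varphi \in \mathcal{H}^K_{w_2^{-1}w_1^{-1}Pw_1w_2}$ and $k \in K$, definition \eqref{b_p} applied with the representative $\dot{w}_1\dot{w}_2$ yields
\[
[B_P(w_1w_2)\varphi](k) \;=\; \sigma^N(\dot{w}_1\dot{w}_2)\,\varphi(\dot{w}_2^{-1}\dot{w}_1^{-1}k),
\]
and relation \eqref{schur} rewrites $\sigma^N(\dot{w}_1\dot{w}_2)$ as $\eta_\sigma(w_1,w_2)\,\sigma^N(\dot{w}_1)\sigma^N(\dot{w}_2)$. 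On the other hand, two successive applications of \eqref{b_p} to compute $[B_P(w_1)\circ B_{w_1^{-1}Pw_1}(w_2)\varphi](k)$ produce exactly $\sigma^N(\dot{w}_1)\sigma^N(\dot{w}_2)\,\varphi(\dot{w}_2^{-1}\dot{w}_1^{-1}k)$, and the identity follows by comparison.

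The $2$-cocycle property of $\eta_\sigma$ then falls out from the associativity of the operator product $\sigma^N(\dot{w}_1)\sigma^N(\dot{w}_2)\sigma^N(\dot{w}_3)$: evaluating $\sigma^N(\dot{w}_1\dot{w}_2\dot{w}_3)$ by the two admissible bracketings and applying \eqref{schur} twice yields
\[
\eta_\sigma(w_1w_2,w_3)\,\eta_\sigma(w_1,w_2)\,\sigma^N(\dot{w}_1)\sigma^N(\dot{w}_2)\sigma^N(\dot{w}_3) \;=\; \eta_\sigma(w_1,w_2w_3)\,\eta_\sigma(w_2,w_3)\,\sigma^N(\dot{w}_1)\sigma^N(\dot{w}_2)\sigma^N(\dot{w}_3),
\]
and since the common operator factor is invertible the scalars must agree, giving the cocycle identity. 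No serious obstacle arises: the whole lemma is a bookkeeping consequence of \eqref{schur} and the definition of $B_P(w)$, provided one exploits the freedom in the choice of representatives.
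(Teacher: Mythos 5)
Your proof is correct and is exactly the verification the paper leaves to the reader when it introduces Lemma \ref{retour} as a ``summary'' of the preceding discussion: the key device of exploiting the freedom of representatives so that $\dot{w}_1\dot{w}_2$ serves as a representative of $w_1w_2$, letting \eqref{schur} apply cleanly, is the intended route. The associativity argument for the $2$-cocycle property is the standard one, and note that it too uses the paper's remark that $\eta_\sigma(w_1,w_2)$ is independent of the chosen representatives, since you apply \eqref{schur} with the non-preferred representative $\dot{w}_1\dot{w}_2$ of $w_1w_2$.
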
 

We now prove a conjugation relation between the operators $B_P(w)$ of \eqref{b_p} and the intertwining operators \eqref{norm_int}.

%
%
%
%
%
%
%
%
%
%
\begin{lemm} \label{conjretour} Let $w_1, w_2$ be elements of $W_\Theta$. Define three parabolic subgroups with Levi factor $M$ by setting $P_{1} = w_1^{-1}Pw_1$, $P_2 = w_2^{-1}Pw_2$ and $P_{12}=(w_1w_2)^{-1}P(w_1w_2)$. For every $\chi \in \mathcal{X}_u(M)$, we have 
\[ B_{P_1}(w_2) R^K_{P_{12}|P_2}(\sigma \otimes \chi)B_{P}(w_2)^{-1} = R^K_{P_1|P}(\sigma \otimes (w_2 \chi)). \qedhere\]
\end{lemm}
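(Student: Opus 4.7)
My plan is to reduce the stated identity to its analogue for the unnormalized standard intertwining operators $J^K$, and then to verify that analogue by a change of variables in the defining integral.

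For the reduction, I would use that the Langlands-Arthur normalizing factors are Weyl-equivariant. One of the defining axioms of \cite[\S 2]{ArthurJFA} asserts that, for any representative $\dot w \in N_G(M)$ of $w \in W_\Theta$,
\[ r_{\dot w P' \dot w^{-1}\,|\,\dot w P \dot w^{-1}}({}^{\dot w}\sigma \otimes (w\chi)) \;=\; r_{P'|P}(\sigma \otimes \chi), \]
and that each factor depends only on the equivalence class of its representation argument. Applied with $\dot w = \dot w_2$ and $P' = P_1$, and using the hypothesis ${}^{\dot w_2}\sigma \simeq \sigma$, this yields
\[ r_{P_{12}|P_2}(\sigma \otimes \chi) \;=\; r_{P_1|P}(\sigma \otimes (w_2\chi)). \]
Since $R^K_{\bullet|\bullet}$ and $J^K_{\bullet|\bullet}$ differ only by these scalar factors, the stated identity is equivalent to
\[ B_{P_1}(w_2)\, J^K_{P_{12}|P_2}(\sigma \otimes \chi)\, B_{P}(w_2)^{-1} \;=\; J^K_{P_1|P}(\sigma \otimes (w_2\chi)). \]

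For the unnormalized identity, it is cleanest to transport everything to the non-compact picture via \eqref{retour_compact}. There $J_{P'|P}(\sigma \otimes \chi)$ is given by the integral formula \eqref{formule_integrale}, and the operator corresponding to $B_P(w_2)$ under $F^K_\bullet$ implements, up to the vector-space automorphism $\sigma^N(\dot w_2)$, the natural identification of induced representations from conjugate parabolics provided by conjugation by $\dot w_2$. Writing out the integral for $J_{P_{12}|P_2}(\sigma \otimes \chi)$ on the left-hand side and performing the substitution $\bar n \mapsto \dot w_2 \bar n \dot w_2^{-1}$, the identifications $\bar N_{P_2} = \dot w_2^{-1} \bar N_P \dot w_2$ and $N_{P_{12}} = \dot w_2^{-1} N_{P_1} \dot w_2$ (immediate from the definitions of $P_1, P_2, P_{12}$) transform the domain of integration into $\bar N_P \cap N_{P_1}$; simultaneously, the intertwining property \eqref{schur} of $\sigma^N(\dot w_2)$ converts the $(\sigma\otimes\chi)$-twist appearing in the integrand into a $(\sigma \otimes (w_2\chi))$-twist. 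The resulting expression coincides with the integrand of $J_{P_1|P}(\sigma\otimes(w_2\chi))$.

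The main point to be careful about is that the change of variables preserves Haar measure, which is automatic because $\dot w_2$ is chosen in the compact subgroup $K$, so its inner automorphism has trivial modulus on any closed subgroup of $G$ that it stabilizes. I note that only a single Weyl element $w_2$ appears on both sides of the identity, so no $2$-cocycle $\eta_\sigma$ (in the sense of Lemma \ref{retour}) needs to be invoked here; cocycle issues arise only when composing $B_P(w)$-operators attached to distinct elements of $W_\Theta$. Transporting the resulting non-compact identity back via \eqref{retour_compact} and combining with the reduction above completes the proof.
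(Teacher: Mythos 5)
Your proposal is correct and does prove the Lemma, but it organizes the argument differently from the paper. The paper factors $B_P(w_2)=\mathrm{mult}_{\sigma^N(\dot w_2)}\circ\ell(\dot w_2)$, checks that the pointwise-multiplication factor $\mathrm{mult}_{\sigma^N(\dot w_2)}$ commutes with the restriction maps $F^K_\bullet(\chi)$ and with the unnormalized intertwining integral (by \eqref{formule_integrale} in the convergent range, then by analytic continuation), cancels those factors, and finally invokes Arthur's axiom $(R_5)$ from \cite[Theorem~2.1]{ArthurJFA} for the remaining left-translation conjugation. You instead strip the normalization first, using the Weyl-equivariance of the $r$-factors together with the fixed-point hypothesis ${}^{\dot w_2}\sigma\simeq\sigma$, and then prove the unnormalized identity by a direct change of variables in the integral — in effect you inline the proof of (the unnormalized content of) $(R_5)$ rather than quoting it, while the paper quotes it and handles the $\sigma^N$-twist as a separate commuting layer. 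Both routes rest on the same Arthur axioms; the paper's is slightly tidier because the role of $\sigma^N(\dot w_2)$ is cleanly isolated. Two small points to fix in your write-up: (1) the ``intertwining property'' you want for $\sigma^N(\dot w_2)$ is the relation $T_{\dot w}\,\sigma(\dot w^{-1}m\dot w)=\sigma(m)\,T_{\dot w}$ stated just before \eqref{sigma_N}, not the cocycle relation \eqref{schur}; and in fact, once you transport to the non-compact picture, the two $\sigma^N(\dot w_2)^{\pm1}$'s coming from the $B$-operators cancel inside the integral — the passage from $\chi$ to $w_2\chi$ is produced by the substitution $\bar n\mapsto\dot w_2\bar n\dot w_2^{-1}$ (and the $P_2\to P$ relabelling), not by the intertwining property of $\sigma^N(\dot w_2)$. (2) You should explicitly appeal to analytic continuation, since the integral formula for $J$ is valid only for $\chi$ in a cone of convergence, as the paper carefully does.
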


\begin{proof} We shall use the properties discussed by Arthur in Section 2 of \cite{ArthurJFA}. The proof is a simple, but somewhat tedious, calculation. It is very close to that in \cite[Lemma 4.12]{GoldbergHerb}.

We begin by remarking that the operator $B_P(w)$ is built from two ingredients: left translation of functions by $\dot{w}$, and pointwise application of the operator $\sigma(\dot{w})$. We introduce further notation to separate the two ingredients:
\begin{itemize}
\item In order to discuss left translation, we introduce 
\begin{equation} \label{b_p}\begin{array}{ccccc}
\ell_P(\dot{w}) & : & \mathcal{H}^K_{w^{-1}Pw} & \to &  \mathcal{H}^K_{P}  \\ & & \varphi & \mapsto & \left[ k \mapsto \varphi(\dot{w}^{-1} k)\right]
\end{array}\end{equation}
whenever  $P$ is a parabolic subgroup of $G$ with Levi factor $M$, $w$ is an element of $W_\Theta$, and  $\dot{w} \in K\cap N_G(M)$ is a representative for $w$. 

In  \cite[Theorem 2.1]{ArthurJFA}, the notation leaves $P$ implicit. But with the above convention, property $(R_5)$ in   \cite[Theorem 2.1]{ArthurJFA} is:
\begin{equation}  \label{r_5} \ell_{P'}(\dot{w}) R^K_{P'|P}(\sigma \otimes \chi)\ell_P(\dot{w})^{-1} = R^K_{wP'w^{-1}\ | \ wPw^{-1}}(\sigma \otimes (w\chi))\end{equation}
for all $P$, $P'$, all $w$ and all $\chi$. 
\item As for pointwise multiplication by $\sigma^N(\dot{w})$, we denote it as follows: for  $f\colon K \to V$ in $\mathcal{H}_P^K$, we set
\begin{equation} \label{pointwise}
\mathrm{mult}_{\sigma^N(\dot{w})} f  \quad :  \quad  \text{the map $g \mapsto \sigma^N(\dot{w}) f(g)$.} \end{equation}
This defines a map $\mathrm{mult}_{\sigma^N(\dot{w})}\colon \mathcal{H}_P^K \to \mathcal{H}_P^K$. Looking back to \S \ref{induites}, notice that formula \eqref{pointwise} also has a meaning when $f$ is a function on $G$ (rather than on $K$); therefore it defines an operator  $ \mathcal{H}_P(\sigma \otimes \chi) \to \mathcal{H}_P(\sigma \otimes \chi)$, which we will still denote by $\mathrm{mult}_{\sigma^N(\dot{w})}$. Of course, the operator $\mathrm{mult}_{\sigma^N(\dot{w})}$ commutes with restriction to $K$: using the notation from \S \ref{induites}, we have 
\begin{equation} \label{comm} \mathrm{mult}_{\sigma^N(\dot{w})} \circ F_P^K(\chi) = F^K_P(\chi)\circ \mathrm{mult}_{\sigma^N(\dot{w})} \end{equation}
for each $P$ and every $\chi$. 
\item With all this notation established, we of course have 
\begin{equation} B_P(w) = \mathrm{mult}_{\sigma^N(\dot{w})} \circ \ell_{w^{-1}Pw}(\dot{w}) = \ell_{w^{-1}Pw}(\dot{w})\circ \mathrm{mult}_{\sigma^N(\dot{w})}.\end{equation}
\end{itemize}

We can now prove Lemma \ref{conjretour}. by unpacking the definitions of the left-hand side of the desired equality. Bringing in the above ingredients, we have : 
\begin{align*} 
B_{P_1}(w_2) R^K_{P_{12}|P_2}(\sigma \otimes \chi)B_{P}(w_2)^{-1} &= \ell_{w_2^{-1}P_1w_2}(w_2)\circ\mathrm{mult}_{\sigma^N(\dot{w_2})} \circ R^K_{P_{12}|P_2}(\sigma \otimes \chi) \circ\mathrm{mult}_{\sigma^N(\dot{w_2})}^{-1} \circ \ell_{w_2^{-1}Pw_2}(w_2)^{-1}. \end{align*}
We now use the expression of the middle term $R^K_{P_{12}|P_2}(\sigma \otimes \chi)$ using the restriction operators, as in \eqref{retour_compact}. We find that the above operator is equal to 
\[ \ell_{P_{12}}(w_2) \circ \mathrm{mult}_{\sigma^N(\dot{w_2})} \circ F_{P_{12}}^K(\chi)\circ R_{P_{12}|P_2}(\sigma \otimes \chi) \circ F_{P_2}(\chi)^{-1} \circ \mathrm{mult}_{\sigma^N(\dot{w_2})}^{-1} \circ \ell_{P_2}(w_2)^{-1}.\]
The second term here commutes with the third, because of \eqref{comm}. So our operator is equal to
\[ \ell_{P_{12}}(w_2) \circ F_{P_{12}}^K(\chi)\circ  \mathrm{mult}_{\sigma^N(\dot{w_2})} \circ R_{P_{12}|P_2}(\sigma \otimes \chi) \circ F_{P_2}(\chi)^{-1} \circ \mathrm{mult}_{\sigma^N(\dot{w_2})}^{-1} \circ \ell_{P_2}(w_2)^{-1}. \]
Now, we remark that $\mathrm{mult}_{\sigma^N(\dot{w})}$ commutes with $R_{P_{12}|P_2}(\sigma \otimes \chi)$. This is obvious from the integral formula \eqref{formule_integrale} when $\chi\in \mathcal{X}(M)$ is in the region of convergence for the integrals; by analytic continuation, it is true for all $\chi \in \mathcal{X}_u(M)$.   Therefore, we can swap the third and fourth terms above, and obtain the expression 
\[ \ell_{P_{12}}(w_2) \circ F_{P_{12}}^K(\chi)\circ   R_{P_{12}|P_2}(\sigma \otimes \chi)\circ  \mathrm{mult}_{\sigma^N(\dot{w_2})} \circ F_{P_2}(\chi)^{-1} \circ \mathrm{mult}_{\sigma^N(\dot{w_2})}^{-1} \circ \ell_{P_2}(w_2)^{-1} \]
for our operator. Still using \eqref{comm}, we can swap the second-to-last term and the third-to-last, and simplify; we find that the left-hand-side for the equality of Lemma \ref{conjretour} is equal to 
\[  \ell_{P_{12}}(w_2) \circ F_{P_{12}}^K(\chi)\circ  R_{P_{12}|P_2}(\sigma \otimes \chi) \circ F_{P_2}(\chi)^{-1} \circ  \ell_{P_2}(w_2)^{-1}.\]
At this point, we revert back to the compact picture, and use \eqref{conjretour} to find that the latter operator is equal to 
\[ \ell_{P_{12}}(w_2) \circ R^K_{P_{12}|P_2}(\sigma \otimes \chi)  \ell_{P_2}(w_2)^{-1}.\]
We finally apply the key property  \eqref{r_5}, together with the definitions of $P_{12}$, $P_1$ and $P_2$, and conclude that
\begin{align*} 
B_{P_1}(w_2) R^K_{P_{12}|P_2}(\sigma \otimes \chi)B_{P}(w_2)^{-1} &=  R^K_{w_2 P_{12}w_2^{-1}|w_2P_2w_2^{-1}}(\sigma \otimes(w_2 \chi))  \\
&=   R^K_{P_1|P}(\sigma \otimes(w_2 \chi)). 
\end{align*}
This proves the Lemma. \end{proof}
%
%
%

\subsection{The twisted $1$-cocycle relation}
 For each $w \in W_\Theta$ and every $\chi \in \mathcal{X}_u(M)$, we now define
\[ \mathcal{A}(w,\sigma \otimes \chi) = B_P(w) R^K_{w^{-1}Pw | P}(\sigma \otimes \chi).\]
It acts on a space that depends neither on $w$ nor on $\chi$ : 
\[\begin{array}{ccccc}
\mathcal{A}(w,\sigma \otimes \chi) & : & \mathcal{H}^K_{P} & \to &  \mathcal{H}^K_{P}. \end{array}
\]
Furthermore, the operator $\mathcal{A}(w,\sigma \otimes \chi)$ intertwines $\mathcal{I}_P^K(\sigma \otimes \chi)$ and $\mathcal{I}_P^K(\sigma \otimes (w\chi))$.

We can combine the properties of normalized intertwining operators \cite[Theorem 2.1]{ArthurJFA} with Lemma \ref{conjretour} above, and obtain the following ``twisted'' cocycle relation. 

\begin{prop} \label{cocycle_twiste} For every $w_1$, $w_2$ in $W_\Theta$ and $\chi \in \mathcal{X}_u(M)$, we have 
\begin{equation} \label{cocycle_shahidi} \mathcal{A}(w_1w_2, \sigma \otimes \chi) = \eta_{\sigma}(w_1,w_2) \mathcal{A}(w_1, \sigma \otimes (w_2\chi)) \mathcal{A}(w_2, \sigma \otimes \chi)\end{equation}
where $\eta_{\sigma}$ is the cocycle of $W_\Theta$ introduced in \eqref{schur}.
 \end{prop}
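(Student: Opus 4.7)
The plan is a direct calculation, chaining together the three ingredients now at our disposal: the twisted composition law for the operators $B_P(w)$ from Lemma \ref{retour}, the multiplicative (``cocycle'') property $(R_2)$ of the normalized intertwining operators $R^K_{P'|P}$ from \cite[Theorem 2.1]{ArthurJFA}, and the conjugation relation of Lemma \ref{conjretour}. The whole cocycle $\eta_\sigma$ will enter from the first ingredient; the other two will furnish only genuine equalities.

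First I would introduce the abbreviations $P_1 = w_1^{-1}Pw_1$, $P_2 = w_2^{-1}Pw_2$ and $P_{12} = (w_1w_2)^{-1}P(w_1w_2) = w_2^{-1}P_1 w_2$, so that the left-hand side becomes
\[ \mathcal{A}(w_1w_2, \sigma \otimes \chi) \;=\; B_P(w_1w_2)\, R^K_{P_{12}|P}(\sigma \otimes \chi). \]
Then applying Lemma \ref{retour} to split $B_P(w_1w_2)$, and property $(R_2)$ of \cite[Theorem 2.1]{ArthurJFA} to split $R^K_{P_{12}|P}(\sigma \otimes \chi) = R^K_{P_{12}|P_2}(\sigma \otimes \chi)\, R^K_{P_2|P}(\sigma \otimes \chi)$ through the intermediate parabolic $P_2$, we arrive at
\[ \mathcal{A}(w_1w_2, \sigma \otimes \chi) \;=\; \eta_\sigma(w_1,w_2)\, B_P(w_1)\, B_{P_1}(w_2)\, R^K_{P_{12}|P_2}(\sigma \otimes \chi)\, R^K_{P_2|P}(\sigma \otimes \chi). \]

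Next, I would apply Lemma \ref{conjretour} to the middle pair: it gives precisely
\[ B_{P_1}(w_2)\, R^K_{P_{12}|P_2}(\sigma \otimes \chi) \;=\; R^K_{P_1|P}(\sigma \otimes (w_2\chi))\, B_P(w_2), \]
so substituting and regrouping yields
\[ \mathcal{A}(w_1w_2, \sigma \otimes \chi) \;=\; \eta_\sigma(w_1,w_2)\, \bigl[B_P(w_1)\, R^K_{P_1|P}(\sigma \otimes (w_2\chi))\bigr]\, \bigl[B_P(w_2)\, R^K_{P_2|P}(\sigma \otimes \chi)\bigr]. \]
Recognizing the two bracketed factors as $\mathcal{A}(w_1, \sigma \otimes (w_2\chi))$ and $\mathcal{A}(w_2, \sigma \otimes \chi)$ respectively (by the very definition of these operators, using $P_1 = w_1^{-1}Pw_1$ and $P_2 = w_2^{-1}Pw_2$) finishes the proof.

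There is no real obstacle beyond careful bookkeeping: the parabolics $P, P_1, P_2, P_{12}$ must be tracked consistently, and the intertwiners and $B$-operators must be pushed past one another in the right order. Lemma \ref{conjretour} has been set up precisely for this swap, and $(R_2)$ is exactly what lets us insert the intermediate parabolic $P_2$ so as to produce the factor $R^K_{P_{12}|P_2}$ needed to apply that lemma. The scalar $\eta_\sigma(w_1,w_2)$ is introduced in the very first step and is never modified afterwards, which is why the right-hand side carries exactly one factor of the cocycle.
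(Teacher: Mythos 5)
Your proof is correct and is essentially identical to the paper's: you use the same intermediate parabolics $P_1, P_2, P_{12}$, the same decomposition via Lemma \ref{retour} and property $(R_2)$, and the same swap via Lemma \ref{conjretour}, in the same order. (If anything, you are more careful than the published proof, which has a small typographical slip writing $B_P(w_2)$ where the domain/range bookkeeping clearly calls for $B_{P_1}(w_2)$, exactly as you have it.)
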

 
\begin{proof} Define $P_1 = w_1^{-1}Pw_1$, $P_2=w_2^{-1}Pw_2$ and $P_{12} = w_2^{-1}w_1^{-1}Pw_1w_2$. By \cite[Theorem 2.1, Property $(R_2)$]{ArthurJFA}, we have 
\[ R^K_{P_{12}|P}(\sigma \otimes \chi) = R^K_{P_{12}|P_2}(\sigma \otimes \chi) R^K_{P_2|P}(\sigma \otimes \chi).\]
We now apply $B_P(w_1w_2)$ on the left, and obtain information on $\mathcal{A}(w_1w_2, \sigma \otimes \chi)$: we have
 \begin{align*}
 \mathcal{A}(w_1w_2, \sigma \otimes \chi)
&=  B_P(w_1w_2) R^K_{P_{12}|P_2}(\sigma \otimes \chi) R^K_{P_2|P}\sigma \otimes \chi)\\
&= \eta_\sigma(w_1,w_2) B_P(w_1) \left[ B_P(w_2) R^K_{P_{12}|P_2}(\sigma \otimes \chi) \right] R^K_{P_2|P}(\sigma \otimes \chi) \\ & \quad \text{(using Lemma \ref{retour}) }\\
&= \eta_\sigma(w_1,w_2) B_P(w_1) \left[  R^K_{P_{1}|P}( \sigma \otimes (w_2\chi)) B_P(w_2) \right] R^K_{P_2|P}(\sigma \otimes \chi) \\ & \quad \text{(using Lemma \ref{conjretour})}\\
&= \eta_\sigma(w_1,w_2)  \left[   B_P(w_1) R^K_{P_{1}|P}( \sigma \otimes (w_2\chi)) \right]  \left[ B_P(w_2) R^K_{P_2|P}(\sigma \otimes \chi) \right] \\  
&=  \eta_{\sigma}(w_1,w_2)  \mathcal{A}(w_1, \sigma \otimes (w_2\chi)) \mathcal{A}(w_2, \sigma \otimes \chi). \quad\quad\quad\quad\quad\quad\quad\quad\quad \qedhere
\end{align*}
\end{proof}

\section{Analytic $R$-groups and central extensions} \label{extension_centrale} \label{r_groupe}

\subsection{The $R$-group \cite{SilbergerKnappStein}} \label{defs_rgroupe} Let $\chi$ be a unitary unramified character of $M$, and let $W_{\sigma \otimes \chi} \subset W_\Theta$ be the stabilizer of  $\sigma \otimes \chi$ in the action of $W_\Theta$ on $\mathcal{O}$. We can consider the subgroup 
\[  W'_{\sigma \otimes \chi} = \left\{ w \in W_{\sigma \otimes \chi} \ : \ \mathcal{A}(w, \sigma \otimes \chi) \text{ is scalar}\right\}.\]
We recall that $W'_{\sigma \otimes \chi}$ is the Weyl group of a root system, which can be defined through the zeroes of the Plancherel measure \cite{SilbergerRgroupe}. 

Let $\mu\colon \mathcal{O} \to \mathbb{R}^+$ be the Plancherel measure (see \cite{Waldspurger}), and let $\Delta$ be the set of roots for $(G,M)$. To each root $\alpha \in \Delta$ is attached a maximal Levi subgroup $M_\alpha \subset G$ that contains $M$ as a Levi subgroup of its own, and  we can call in the associated Plancherel measure $\mu_{M_\alpha} : \mathcal{O} \to \mathbb{R}^+$. A root $\alpha \in \Delta$ is called $(\sigma \otimes \chi)$-useful when $\mu_\alpha(\sigma \otimes \chi)=0$; the set $\Delta'$ of $(\sigma\otimes \chi)$-useful roots it then itself a root system, and the group $W'_{\sigma \otimes \chi}$ is its Weyl group. 
We now fix a positive system $\Delta'_+$ in $\Delta'$, and introduce the $R$-group
\[  R_{\sigma \otimes \chi} = \left\{ w \in W_{\sigma \otimes \chi} \ : \ w(\Delta'_+) = \Delta'_+\right\}. \]
We can then write the stabilizer  $W_{\sigma \otimes \chi} $ as a semidirect product
\begin{equation} W_{\sigma \otimes \chi} = W'_{\sigma \otimes \chi} \rtimes R_{\sigma \otimes \chi}, \end{equation}
where $W'_{\sigma \otimes \chi}$ is normal and $R_{\sigma \otimes \chi}$ acts on $W'_{\sigma \otimes \chi}$ by conjugation.

\subsection{Arthur $2$-cocycles and central extension} A consequence of Proposition \ref{cocycle_twiste} is that {for every point $\tau = \sigma \otimes \chi$ on the orbit $\mathcal{O}$,} the map
\[ r \mapsto \mathcal{A}(r, \tau)\]
defines a projective representation of {$R_\tau$} on $\mathcal{H}^K_P$. The multiplier of this projective representation is the restriction $\eta_\tau: R_\tau \times R_\tau \to \mathbb{C}^\times$ of the cocycle $\eta_{\sigma}$ of \eqref{schur}. 

The theory of the $R$-group can be brought to full fruition once we choose, as in  \cite[\S 2]{ArthurActa}, a central extension 
\[ 1 \to Z_\tau \to \widetilde{R}_\tau \overset{p}{\to} R_\tau \to 1\]
with the property that the $2$-cocycle $\eta_\tau$ of  $\widetilde{R}_\tau$ is a coboundary. Such a central extension exists, and we can then choose a map
\[ \xi_\tau\colon \widetilde{R}_\tau \to \mathbb{C}^\star\]
that splits $\eta_\tau$, in that:
\begin{equation} \label{cobord} \forall w_1, w_2 \in \widetilde{R}_\tau, \quad \eta_\tau(w_1,w_2) = \frac{\xi_\tau(w_1w_2)}{\xi_\tau(w_1)\xi_\tau(w_2)}.\end{equation}
 The representation $\tilde{r} \mapsto \xi_\tau(\tilde{r})^{-1} \mathcal{A}(p(\tilde{r}), \tau)$ is then a genuine representation of $\widetilde{R}_\tau$ on $\mathcal{H}_P^K$. Its central character on $Z_\tau$ is the map $\zeta_\tau\colon Z_\tau \to \mathbb{C}^\star$ given by $\zeta_\tau(z)= \xi_\tau(z)^{-1}$ for $z \in Z_\tau$.

\begin{theo}[\cite{ArthurActa}, \S 2] \label{quasi_arthur} \begin{enumerate}
\item The irreducible components of $\mathcal{I}^K_{P}(\tau)$ stand in natural bijection with the set of irreducible representations of $\widetilde{R}_\tau$ with $Z_\tau$-central character $\zeta_\tau$.
\item The representation $\tilde{r} \mapsto \xi_\tau(\tilde{r})^{-1} \mathcal{A}(p(\tilde{r}), \tau)$ of $\widetilde{R}_\tau$ on $\mathcal{H}_{P}^K$ is quasi-equivalent with the induced representation $\mathrm{Ind}_{Z_\tau}^{\widetilde{R}_\tau}(\zeta_\tau)$.\qedhere
\end{enumerate}
\end{theo}
 
(For (2), we recall that  ``quasi-equivalent'' means that the two representations have the same irreducible constituents, though these may occur with different multiplicities.)\\

Returning to the fixed point $\sigma$, we close this preliminary discussion with a remark on the behaviour of intertwining operators over the complementary subgroup $W'_{\sigma}$. As Arthur points out  on page 91 of \cite{ArthurActa}, it is possible to normalize the choice of operators $T_{\dot{w}}$ (from \S \ref{subsec:reverting}) in such a way that the scalar operators $\mathcal{A}(w,\sigma)$, for $w$ in the subgroup  $W'_{\sigma}$, are all equal to the identity. Upon introducing the extended group
\begin{equation} \label{wtilde} \widetilde{W}_{\Theta} = W'_{\sigma} \rtimes \widetilde{R}_{\sigma}\end{equation}
associated with the action of $\widetilde{R}_{\sigma}$ on $W'_{\sigma}$ {(where $Z_\sigma$ acts trivially),} 
 we then see that the maps  $\xi_\sigma$ and  $\eta_\sigma$ extend to all of  $ \widetilde{W}_{\Theta}$ and $ \widetilde{W}_{\Theta} \times \widetilde{W}_{\Theta}$, and that \eqref{cobord} remains true with $w_1$, $w_2$ in $\widetilde{W}_{\Theta}$.

\subsection{Straightening the intertwining operators}

Still working with our fixed point $\sigma$, we finally use the splitting map $\xi_{\sigma}: \widetilde{W}_\Theta \to \mathbb{C}^\star$ to define the straightened operators
\[ \mathfrak{a}(w, \sigma \otimes \chi) =\xi_{\sigma}(w)^{-1} \mathcal{A}(w,  \sigma \otimes \chi), \quad   w \in \widetilde{W}_\Theta, \quad \chi \in \mathcal{X}_u(M).\]

{These are unitary operators on $\mathcal{H}_P^K$ (combine the results of \cite[Theorem 2.1]{ArthurJFA}, especially properties $(R_4)$ and $(R_2$), with the fact that the operators  $\sigma^N(\dot{w})$ are unitary and the fact that the map $\eta_{\sigma}$  must have modulus one).} 

The results of \S \ref{norma}, combined with Proposition \ref{cocycle_twiste}, then lead to the following observation.
\begin{prop}[1-cocycle relation after straightening]~\label{cocycle_rel}
\begin{itemize}
\item[$\bullet$]  The cocycle relation
\[ \mathfrak{a}(w_1w_2, \sigma \otimes \chi) = \mathfrak{a}(w_1, w_2 (\sigma \otimes \chi)) \mathfrak{a}(w_2, \sigma \otimes \chi)\]
holds for every $w_1, w_2$ in $\widetilde{W}_\Theta$ and for every $\chi \in \mathcal{X}_u(M)$. 
\item[$\bullet$]  Given $w \in \widetilde{W_\Theta}$, {the map  $\chi \mapsto \mathfrak{a}(w, \sigma \otimes \chi)$ is continuous as a map from $\mathcal{X}_u(M)$ to the group of unitary operators of $\mathcal{H}_P^K$}.\qedhere
\end{itemize}
\end{prop}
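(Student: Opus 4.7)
The plan is to reduce both claims to material already established in the preceding subsections: the twisted cocycle identity of Proposition \ref{cocycle_twiste}, the coboundary relation \eqref{cobord} which expresses $\eta_\sigma$ as a coboundary of $\xi_\sigma$ on $\widetilde{W}_\Theta$, and the (holomorphic, hence continuous) dependence of the normalized intertwiners $R^K_{P'|P}(\sigma\otimes\chi)$ on $\chi\in\mathcal{X}_u(M)$ recorded at the end of \S\ref{induites}. Nothing deep remains to be done; the work is essentially bookkeeping.

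For the first bullet, I would interpret $\mathcal{A}(w,\sigma\otimes\chi)$ for $w\in\widetilde{W}_\Theta$ as $\mathcal{A}(p(w),\sigma\otimes\chi)$, where $p\colon \widetilde{W}_\Theta\to W_\Theta$ is the natural projection induced by $\widetilde{R}_\sigma\to R_\sigma$; similarly $w(\sigma\otimes\chi)$ is the action through $p$, and since $\sigma$ is $W_\Theta$-fixed this equals $\sigma\otimes (p(w)\chi)$. The extensions of $\eta_\sigma$ and $\xi_\sigma$ to $\widetilde{W}_\Theta$ recalled just above \eqref{wtilde} ensure that both \eqref{cobord} and the conclusion of Proposition \ref{cocycle_twiste} hold verbatim for $w_1,w_2\in\widetilde{W}_\Theta$. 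Substituting Proposition \ref{cocycle_twiste} into the definition of $\mathfrak{a}(w_1w_2,\sigma\otimes\chi)$, then eliminating the quotient $\eta_\sigma(w_1,w_2)/\xi_\sigma(w_1w_2)$ via \eqref{cobord} and regrouping the factors $\xi_\sigma(w_i)^{-1}$ with the corresponding $\mathcal{A}$'s, one recovers $\mathfrak{a}(w_1,\sigma\otimes(w_2\chi))\,\mathfrak{a}(w_2,\sigma\otimes\chi)$ in a single line of computation.

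For the second bullet, I would unpack the definition as
\[ \mathfrak{a}(w,\sigma\otimes\chi) \;=\; \xi_\sigma(w)^{-1}\, B_P(w)\, R^K_{w^{-1}Pw\,|\,P}(\sigma\otimes\chi). \]
The scalar $\xi_\sigma(w)^{-1}$ and the operator $B_P(w)$ are independent of $\chi$, while the remaining factor is holomorphic, and in particular norm-continuous, in $\chi\in\mathcal{X}_u(M)$ by the remark at the end of \S\ref{induites}. Left-multiplication by the fixed operator $\xi_\sigma(w)^{-1}B_P(w)$ preserves continuity into the unitary group of $\mathcal{H}_P^K$, concluding the argument. Unitarity of $\mathfrak{a}(w,\sigma\otimes\chi)$, which is implicit in the statement, has already been observed just above from the unitarity of the $\sigma^N(\dot w)$, the property $(R_4)$ of \cite[Theorem 2.1]{ArthurJFA}, and the fact that $|\eta_\sigma|=1$.

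There is no real obstacle here. The only subtlety worth verifying explicitly is the consistency of the extension of $\xi_\sigma$ (and of $\eta_\sigma$) from $\widetilde{R}_\sigma$ to the larger group $\widetilde{W}_\Theta=W'_\sigma\rtimes\widetilde{R}_\sigma$ with Arthur's normalization of the operators $T_{\dot w}$ for $w\in W'_\sigma$ that forces $\mathcal{A}(w,\sigma)=\mathrm{Id}$. This has been arranged in the discussion preceding \eqref{wtilde}, so the extended cocycle identity feeds directly into the computation above without additional hypothesis.
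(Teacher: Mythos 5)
Your proof is correct and is precisely the argument the paper intends, since the paper presents this Proposition as an immediate consequence of Proposition~\ref{cocycle_twiste} and the discussion preceding \eqref{wtilde} without spelling out the details. The one-line elimination of $\eta_\sigma(w_1,w_2)$ via \eqref{cobord} for the first bullet, and the factorization of $\mathfrak{a}(w,\sigma\otimes\chi)$ into a $\chi$-independent prefactor times the holomorphic intertwiner for the second bullet, are exactly the intended steps, and you correctly flag the only point requiring care — that the extensions of $\xi_\sigma$, $\eta_\sigma$ to $\widetilde{W}_\Theta$ and the interpretation of $\mathcal{A}$ and of the action through the projection $p$ are the ones already arranged in the text just above \eqref{wtilde}.
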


\section{Variation of $R$-groups along the orbit and quasi-equivalence property}\label{point_universel}

\subsection{Extension of the groups $W_{\sigma \otimes \chi}$}\label{treillis_r_groupes} 

Fix a unitary unramified character $\chi$ of $M$, and consider the stabilizer $W_{\sigma \otimes \chi} \subset W_\Theta$. We recall that we are working under the assumption that $\sigma$ is a fixed point satistying the additional hypothesis \ref{hyp_inclusions}: upon considering the decompositions
\begin{align*} 
 {W}_{\Theta} = W_\sigma & = W'_{\sigma} \rtimes {R}_{\sigma}, \text{ and}\\
 W_{\sigma\otimes \chi} & = W'_{\sigma\otimes \chi} \rtimes {R}_{\sigma\otimes \chi},
\end{align*}
what we have assumed is that

\begin{enumerate}[(a)]
\item  $W'_{\sigma \otimes \chi} \subset W'_{\sigma}$,
\item and there exists an injective morphism $\iota_\chi\colon R_{\sigma \otimes \chi} \to R_{\sigma}$.
\end{enumerate}

In \S \ref{extension_centrale}, we introduced a central extension 
\[ 1 \to Z_\sigma \to \widetilde{R}_{\sigma} \to R_{\sigma} \to 1\]
of the $R$-group attached to the fixed point $\sigma$. We shall be more specific than in \S \ref{extension_centrale} concerning the central extensions $\widetilde{R}_{\sigma \otimes \chi}$ and $\widetilde{W}_{\sigma \otimes \chi}$ which we will use at unramified twists of $\sigma$ by nontrivial characters $\chi \in \mathcal{X}_u(M)$.  

Fix such a $\chi$, and using the embedding $\iota_\chi\colon R_{\sigma \otimes \chi} \to R_{\sigma}$, define
\[ \widetilde{R}_{\sigma \otimes \chi} = \left\{ \widetilde{r} \in \widetilde{R}_{\sigma} \ : \ \mathrm{proj}_{\widetilde{R}_{\sigma} \to R_{\sigma}}(\widetilde{r}) \text{ lies in the image } \iota_\chi(R_{\sigma \otimes \chi})\right\}.\]
This is a subgroup of $\widetilde{R}_{\sigma}$, it contains  $Z_\sigma$, and there is a short exact sequence
\[ 1 \to Z_\sigma \to \widetilde{R}_{\sigma \otimes \chi} \to R_{\sigma \otimes \chi} \to 1\]
where the surjective arrow takes an element in $ \widetilde{R}_{\sigma \otimes \chi} \subset \widetilde{R}_{\sigma}$, projects it in $R_{\sigma}$, and extracts the unique antecedent of the projection by the morphism $\iota_{\chi}$. {By construction, the cocycle $\eta_{\sigma \otimes \chi}$ of $R_{\sigma \otimes \chi}$ becomes a coboundary on $\widetilde{R}_{\sigma \otimes \chi}$, and we can split it using the map $\xi_{\sigma \otimes \chi}\colon  \widetilde{R}_{\sigma \otimes \chi} \to \mathbb{C}^\times$ that sends $\tilde{r}   \in \widetilde{R}_{\sigma}$ to the value of $\xi_\sigma$ at $\mathrm{proj}_{\widetilde{R}_{\sigma} \to R_{\sigma}}(\widetilde{r})$. Notice that the restriction of $\xi_{\sigma \otimes \chi}^{-1}$ to $Z_\sigma$ is given by the character $\zeta_\sigma=(\xi_{\sigma})_{|Z_{\sigma}}^{-1}$, and does not depend on $\chi$. }

Applied at $\sigma \otimes \chi$, Theorem  \ref{quasi_arthur} then gives us the following information. 
\begin{lemm} \label{artq} Given $\chi \in \mathcal{X}_u(M)$, the representation $\widetilde{r} \mapsto \mathfrak{a}(\widetilde{r}, \sigma \otimes \chi)$ of ${\widetilde{R}_{\sigma \otimes \chi}}$ on $\mathcal{H}^K_P$ is quasi-equivalent with the representation $\mathrm{Ind}_{Z_\sigma}^{\widetilde{R}_{\sigma\otimes \chi}}(\zeta_\sigma)$. \end{lemm}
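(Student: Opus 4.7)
The plan is to reduce the lemma to a direct application of Theorem \ref{quasi_arthur}(2) at the point $\tau = \sigma \otimes \chi$, using the specific central extension $\widetilde{R}_{\sigma \otimes \chi}$ and splitting $\xi_{\sigma \otimes \chi}$ introduced in the paragraph preceding the statement. Since Theorem \ref{quasi_arthur}(2) is formulated for any central extension over which the $R$-group $2$-cocycle splits, only two preliminary checks are needed: that $\widetilde{r} \mapsto \mathfrak{a}(\widetilde{r}, \sigma \otimes \chi)$ defines a genuine unitary representation of $\widetilde{R}_{\sigma \otimes \chi}$ on $\mathcal{H}^K_P$, and that its restriction to $Z_\sigma$ acts by the scalars $\zeta_\sigma$.

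The central character computation is immediate: any $z \in Z_\sigma$ projects to the identity in $R_\sigma$, so $\mathcal{A}(z, \sigma \otimes \chi) = \mathrm{Id}$ and hence $\mathfrak{a}(z, \sigma \otimes \chi) = \xi_\sigma(z)^{-1}\,\mathrm{Id} = \zeta_\sigma(z)\,\mathrm{Id}$. To see that the operators $\mathfrak{a}(\widetilde{r}, \sigma \otimes \chi)$ multiply correctly on $\widetilde{R}_{\sigma \otimes \chi}$, I would combine the $1$-cocycle relation of Proposition \ref{cocycle_rel}, applied to pairs inside $\widetilde{R}_{\sigma \otimes \chi} \subset \widetilde{W}_{\Theta}$, with the fact that, for elements of $\widetilde{R}_{\sigma \otimes \chi}$ (whose images in $R_\sigma$ preserve the equivalence class of $\sigma \otimes \chi$), the straightening factor $\xi_{\sigma \otimes \chi} = \xi_\sigma \circ \mathrm{proj}$ is designed precisely to trivialize the residual obstruction. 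Once this genuine representation structure and its central character are in place, Theorem \ref{quasi_arthur}(2) matches the representation with $\mathrm{Ind}_{Z_\sigma}^{\widetilde{R}_{\sigma \otimes \chi}}(\zeta_\sigma)$, since the two share the same set of irreducible constituents, namely the irreducibles of $\widetilde{R}_{\sigma \otimes \chi}$ whose $Z_\sigma$-central character is $\zeta_\sigma$.

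The main difficulty I anticipate is the genuine representation check. The relation of Proposition \ref{cocycle_rel} produces $\mathfrak{a}(w_1, w_2(\sigma \otimes \chi))$ on its right-hand side, which, for $w_2$ in the stabilizer of $\sigma \otimes \chi$ up to equivalence, a priori acts on $\mathcal{I}^K_P(\sigma \otimes w_2\chi)$ rather than on $\mathcal{I}^K_P(\sigma \otimes \chi)$ itself; the two spaces are only equivalent, not equal. One has to track how the equivalence $\sigma \otimes (w_2\chi) \simeq \sigma \otimes \chi$ is implemented by the intertwiners $T_{\dot{w}}$ of \S \ref{subsec:reverting}, and to verify that the resulting discrepancy is exactly absorbed by the splitting $\xi_\sigma \circ \mathrm{proj}$. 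This is the technical heart of the argument; once it is in place, invoking Theorem \ref{quasi_arthur}(2) is pure bookkeeping.
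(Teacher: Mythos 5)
Your proposal matches the paper's (unstated) proof: the lemma is just Theorem \ref{quasi_arthur}(2) applied at $\tau = \sigma \otimes \chi$, together with the identifications $\xi_{\sigma \otimes \chi} = \xi_\sigma|_{\widetilde{R}_{\sigma \otimes \chi}}$ and $\zeta_{\sigma \otimes \chi} = \zeta_\sigma$ already recorded in the paragraph preceding the lemma. The ``genuine representation check'' you anticipate as the technical heart is not a separate step here: Theorem \ref{quasi_arthur}(2) already asserts that $\tilde{r} \mapsto \xi_\tau(\tilde{r})^{-1}\mathcal{A}(p(\tilde{r}), \tau)$ \emph{is} a representation of $\widetilde{R}_\tau$ once $\xi_\tau$ splits $\eta_\tau$, so no further analysis of Proposition \ref{cocycle_rel} is required.
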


We also define the semidirect product
\[ \widetilde{W}_{\sigma \otimes \chi} = W'_{\sigma \otimes \chi} \rtimes \widetilde{R}_{\sigma \otimes \chi}\]
associated with the action of $\widetilde{R}_{\sigma \otimes \chi}$ on $W'_{\sigma \otimes \chi}$ obtained by composing the projection  $\widetilde{R}_{\sigma \otimes \chi} \to {R}_{\sigma \otimes \chi}$ with the action of ${R}_{\sigma \otimes \chi}$ on $W'_{\sigma \otimes \chi}$.

%

\subsection{Quasi-equivalence property} We now observe that under Assumption \ref{hyp_inclusions}, we can extend Lemma \ref{artq} to the following result (compare \cite[\S 3.9]{LeungPlymen}). 

\begin{prop} \label{prop_res} The representation $\mathfrak{a}(\cdot, \sigma \otimes \chi)$ of $\widetilde{W}_{\sigma \otimes \chi}$ is quasi-equivalent with the restriction  $\mathfrak{a}(\cdot, \sigma)_{|\widetilde{W}_{\sigma \otimes \chi}}$.\end{prop}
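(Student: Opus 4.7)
The strategy is to exploit the semidirect product decomposition $\widetilde{W}_{\sigma \otimes \chi} = W'_{\sigma \otimes \chi} \rtimes \widetilde{R}_{\sigma \otimes \chi}$, comparing the two representations separately on the normal subgroup and on the complement. Assumption \ref{hyp_inclusions} immediately yields the inclusion $\widetilde{W}_{\sigma \otimes \chi} \subset \widetilde{W}_\Theta$ (by (a) for the normal part, and by the construction of $\widetilde{R}_{\sigma \otimes \chi}$ as a subgroup of $\widetilde{R}_\sigma$ for the complement), so the restriction $\mathfrak{a}(\cdot,\sigma)_{|\widetilde{W}_{\sigma \otimes \chi}}$ is well-defined and acts on the same Hilbert space $\mathcal{H}^K_P$ as $\mathfrak{a}(\cdot,\sigma \otimes \chi)$.

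On the normal factor $W'_{\sigma \otimes \chi} \subset W'_\sigma$, the Arthur normalization recalled at the end of \S \ref{extension_centrale} already trivializes $\mathfrak{a}(\cdot,\sigma)$. The same strategy, applied at the orbit point $\sigma \otimes \chi$ and using the freedom in extending $\xi_{\sigma \otimes \chi}$ from $\widetilde{R}_{\sigma \otimes \chi}$ to all of $\widetilde{W}_{\sigma \otimes \chi}$ so as to absorb the scalars $\mathcal{A}(w,\sigma \otimes \chi)$ for $w \in W'_{\sigma \otimes \chi}$, trivializes $\mathfrak{a}(\cdot,\sigma \otimes \chi)$ there too. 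Both representations therefore factor through the quotient $\widetilde{W}_{\sigma \otimes \chi}/W'_{\sigma \otimes \chi} \simeq \widetilde{R}_{\sigma \otimes \chi}$, and the desired quasi-equivalence reduces to the analogous statement for the restrictions to $\widetilde{R}_{\sigma \otimes \chi}$.

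For the complement, the plan is to invoke Lemma \ref{artq} at the two points of the orbit. Applied at $\sigma \otimes \chi$, it says that $\mathfrak{a}(\cdot, \sigma \otimes \chi)_{|\widetilde{R}_{\sigma \otimes \chi}}$ is quasi-equivalent to $\mathrm{Ind}_{Z_\sigma}^{\widetilde{R}_{\sigma \otimes \chi}}(\zeta_\sigma)$. Applied at $\sigma$, it gives that $\mathfrak{a}(\cdot, \sigma)_{|\widetilde{R}_\sigma}$ is quasi-equivalent to $\mathrm{Ind}_{Z_\sigma}^{\widetilde{R}_\sigma}(\zeta_\sigma)$. Restricting the latter to $\widetilde{R}_{\sigma \otimes \chi}$ via Mackey's formula, and noting that $Z_\sigma$ is central in $\widetilde{R}_\sigma$ and already contained in $\widetilde{R}_{\sigma \otimes \chi}$ (so that all double cosets collapse to right cosets of $\widetilde{R}_{\sigma \otimes \chi}$ and the twisting by conjugation disappears), yields a direct sum of $[\widetilde{R}_\sigma : \widetilde{R}_{\sigma \otimes \chi}]$ copies of $\mathrm{Ind}_{Z_\sigma}^{\widetilde{R}_{\sigma \otimes \chi}}(\zeta_\sigma)$. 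The two restrictions therefore have the same set of irreducible constituents, namely the irreducible representations of $\widetilde{R}_{\sigma \otimes \chi}$ whose $Z_\sigma$-central character is $\zeta_\sigma$. This is exactly the required quasi-equivalence.

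The hardest step will be the normalization on the normal factor. A priori, for $w \in W'_{\sigma \otimes \chi}$ the scalar value of $\mathcal{A}(w, \sigma \otimes \chi)$ is some non-trivial continuous function of $\chi$ that equals one when $\chi$ is trivial, and one must verify that this scalar can be coherently absorbed into the extension of $\xi_{\sigma \otimes \chi}$ to $\widetilde{W}_{\sigma \otimes \chi}$ \emph{without} conflicting with its values on $\widetilde{R}_{\sigma \otimes \chi}$ already fixed via the inclusion into $\widetilde{R}_\sigma$. The nesting $W'_{\sigma \otimes \chi} \subset W'_\sigma$ provided by Assumption \ref{hyp_inclusions}(a), together with the continuity provided by Proposition \ref{cocycle_rel}, is precisely what makes Arthur's normalization at the fixed point $\sigma$ propagate coherently to the neighboring orbit point $\sigma \otimes \chi$.
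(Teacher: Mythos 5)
Your proof is correct and follows essentially the same route as the paper: decompose $\widetilde{W}_{\sigma\otimes\chi}=W'_{\sigma\otimes\chi}\rtimes\widetilde{R}_{\sigma\otimes\chi}$, argue both sides factor through the projection onto $\widetilde{R}_{\sigma\otimes\chi}$ (using Assumption (a) and Arthur's normalization), then compare the $\widetilde{R}_{\sigma\otimes\chi}$-restrictions via Lemma~\ref{artq}, exploiting centrality of $Z_\sigma$. The only cosmetic difference is at the last step: you compute $\mathrm{Res}_{\widetilde{R}_{\sigma\otimes\chi}}\mathrm{Ind}_{Z_\sigma}^{\widetilde{R}_\sigma}(\zeta_\sigma)$ explicitly via Mackey's formula (getting $[\widetilde{R}_\sigma:\widetilde{R}_{\sigma\otimes\chi}]$ copies of $\mathrm{Ind}_{Z_\sigma}^{\widetilde{R}_{\sigma\otimes\chi}}(\zeta_\sigma)$), whereas the paper runs a Frobenius-reciprocity argument to show that the two sets of constituents of $\widetilde{R}_{\sigma\otimes\chi}$-representations agree; the two are interchangeable. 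Regarding the concern in your final paragraph: the paper sidesteps this entirely by \emph{defining} both $\lambda$ and $\mu$ to be $w=w'\tilde r\mapsto\mathfrak{a}(\tilde r,\cdot)$, i.e.\ it builds the factorization through $\widetilde{R}_{\sigma\otimes\chi}$ into the very definition of the representations under comparison, rather than verifying that the scalar $\mathcal{A}(w',\sigma\otimes\chi)$ for $w'\in W'_{\sigma\otimes\chi}$ can be absorbed into an extension of $\xi_{\sigma\otimes\chi}$; this is a fair choice because for the downstream application the operators enter only through $\mathrm{Ad}$, which kills scalars.
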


\begin{proof} Write $\lambda\colon \widetilde{W}_{\sigma \otimes \chi} \mapsto \mathrm{End}(\mathcal{H}_{P}^K)$ for the representation $w=w'\tilde{r} \mapsto \mathfrak{a}(\tilde{r}, \sigma \otimes \chi)$ of $\widetilde{W}_{\sigma \otimes \chi}$ (where, given $w \in \widetilde{W}_{\sigma \otimes \chi}$, we write $w=w'\widetilde{r}$ for its decomposition as an element of $W'_{\sigma \otimes \chi} \rtimes \widetilde{R}_{\sigma \otimes \chi}$). Let us also denote by $\mu:  \widetilde{W}_{\sigma \otimes \chi} \mapsto \mathrm{End}(\mathcal{H}_{P}^K)$ the representation $w=w'\tilde{r} \mapsto \mathfrak{a}(\tilde{r}, \sigma)$. What we need to show is that  $\lambda$ and $\mu$ are quasi-equivalent as representations of $\widetilde{W}_{\sigma \otimes \chi}$. 

Lemma \ref{artq} provides us with  a precise knowledge of the irreducible representations of  $\widetilde{W}_{\sigma \otimes \chi}$ that are contained in $\lambda$: they are those in which
\begin{enumerate}[(i)]
\item $W'_{\sigma\otimes \chi}$ acts trivially,
\item  and $\widetilde{R}_{\sigma \otimes \chi}$ acts through an irreducible representation $\beta$ whose central character on $Z_\sigma$ is $\zeta_\sigma$.
\end{enumerate}
As for the irreducible representations of $\widetilde{W}_{\sigma \otimes \chi}$ that occur in $\mu$, we can again use Lemma \ref{artq} to see that in every one of them,
\begin{enumerate}[(a)]
\item[(i)] $W'_{\sigma\otimes \chi}$ acts trivially (because of Assumption \ref{hyp_inclusions}.(a)),
\item[(ii')] and $\widetilde{R}_{\sigma \otimes \chi}$ acts through an irreducible representation $\gamma$ that occurs in the restriction $\mathrm{Res}_{\widetilde{R}_{\sigma \otimes \chi}}\left( \mathrm{Ind}_{Z_\sigma}^{\widetilde{R}_\sigma}\left( \zeta_\sigma\right)\right)$.
\end{enumerate}
Any representation that satisfies (i) and (ii') clearly satisfies  (i) and (ii), since $Z_\sigma$ is contained in $\widetilde{R}_{\sigma \otimes \chi}$. On the other hand, given a representation that satisfies (i) and (ii), the attached representation $\beta$ of $\widetilde{R}_{\sigma \otimes \chi}$ satisfies
\[ \mathrm{Hom}_{\widetilde{R}_{\sigma \otimes \chi}}\left[\beta, \, \mathrm{Res}_{\widetilde{R}_{\sigma \otimes \chi}}\left( \mathrm{Ind}_{Z_\sigma}^{\widetilde{R}_\sigma}\left( \zeta_\sigma\right)\right)\right] = \mathrm{Hom}_{\widetilde{R}_{\sigma}}\left[ \mathrm{Ind}_{\widetilde{R}_{\sigma \otimes \chi}}^{\widetilde{R}_\sigma}(\beta), \, \mathrm{Ind}_{Z_\sigma}^{\widetilde{R}_\sigma}(\zeta_\sigma)\right].\]
Now,  $\mathrm{Ind}_{Z_\sigma}^{\widetilde{R}_\sigma}(\zeta_\sigma)$ contains each of those irreducible representations of  $\widetilde{R}_\sigma$ whose central chracter on $Z_\sigma$ is $\beta$; whenever  $\beta$ satisfies (ii), the subgroup $Z_\sigma$ acts by  $\zeta_\sigma$ in $\mathrm{Ind}_{\widetilde{R}_{\sigma \otimes \chi}}^{\widetilde{R}_\sigma}(\beta)$ (this is seen by inspecting the definition of induced representations of finite groups, and inserting the fact that $Z_\sigma$ is central). We conclude that the representations that satisfy (i)-(ii) are the same as those that satisfy (i)-(ii'), and obtain the Proposition. \qedhere

\end{proof}

\section{Wassermann-Plymen theorem}\label{fin_preuve}

\subsection{The $C^\ast$-blocks and the Plancherel formula}\label{c_theta} We can now turn to $C^\ast$-algebras, and prove Theorem \ref{principal}. We shall first record the definition of the $C^\ast$-algebra $C^\ast_r(G,\Theta)$ on the left-hand side of the Morita-equivalence in Theorem \ref{principal}, and its relationship with the Plancherel formula. 

Let $M$ be a Levi subgroup of $G$, let $\sigma$ be a discrete series representation of $M$, and let  $\mathcal{O} \subset \mathcal{E}_2(M)$ be the orbit of $\sigma$ under $\mathcal{X}_u(M)$. Let $\Theta=(M,\mathcal{O})_G$ denote the $G$-conjugacy class of the pair $(M, \mathcal{O})$. 

Fix a parabolic subgroup $P$ of $G$ with Levi factor $M$. For every $\tau$ in $\mathcal{O}$, {we consider the representation $\mathcal{I}^K_{P}(\tau)$ on the Hilbert space $\mathcal{H}_P^K$}. Fixing a Haar measure on $G$, we can, for every smooth and compactly supported function $f \in \mathcal{C}^\infty_c(G)$, form the operator $\pi_{\tau}(f)=\int_{G} f(g)  \mathcal{I}_P^K(\tau)(g)dg$ (in the notation of \S \ref{induites}). For $f \in \mathcal{C}^\infty_c(G)$, the operator $\pi_{\tau}(f)$ on $\mathcal{H}_P^K$ is compact, and depends continuously on $\tau$. We obtain a linear map 
\[ C^\infty_{c}(G) \to \mathcal{C}_0(\mathcal{O},\mathfrak{K}(\mathcal{H}_{P}^K))\]
where $\mathfrak{K}(\mathcal{H}_P^K)$ stands for the algebra of compact operators on $\mathcal{H}_P^K$. Upon completing $C^\infty_{c}(G)$ to the reduced $C^\ast$-algebra $C^\ast_r(G)$, there arises (using the continuity arguments of \cite[\S 2.6]{Plymen1}) a $C^\ast$-morphism 
\[ C^\ast_r(G) \to \mathcal{C}_0(\mathcal{O},\mathfrak{K}(\mathcal{H}_{P}^K)).\]
We write $C^\ast_r(G;\Theta)$ for the image of $C^\ast_r(G)$ in the algebra on the right-hand side. 

Now form the {$C^\ast$-algebraic direct sum $\bigoplus C^\ast_r(G;\Theta)$, indexed by $G$-conjugacy classes $\Theta$ of discrete pairs $(M,\mathcal{O})$ (see \cite[\S 2]{CCH})}. From the above discussion, we obtain a morphism 
\begin{equation} \label{planch} C^\ast_r(G) \to \bigoplus \limits_{\Theta} C^\ast_r(G;\Theta). \end{equation}
One of the main results of \cite{Plymen1}, that can be viewed as a reformulation of Harish-Chandra's Plancherel formula, is that this map is a $C^\ast$-isomorphism. For enlightening treatments of this topic, see \cite{Plymen1} {on the reduced $C^\ast$-algebras of $p$-adic groups}, \cite[\S 5-6]{CCH} for a related $C^\ast$-algebraic discussion in the case of real groups, and of course \cite[\S VI-VIII]{Waldspurger} for the Plancherel formula in the $p$-adic case. The Plancherel formula  in \cite[Theorem VIII.1.1]{Waldspurger} uses the Schwartz algebra rather than the reduced $C^\ast$-algebra of \eqref{planch}, and spaces of smooth functions as in \cite[\S VI.3]{Waldspurger} serve as counterparts for the right-hand side of \eqref{planch}. {Concerning the $C^\ast$-algebraic aspect, we mention the role of Bernstein's uniform admissibility theorem in ensuring that the map in \eqref{planch} takes values in the $C^\ast$-algebraic direct sum; see the enlightening discussion in \cite[\S 5]{CCH}. }

\subsection{The Leung--Plymen method}\label{leung_plymen}

We are ready to apply the results of  \cite[sections 2 and 3]{LeungPlymen}  and go on to a proof of Theorem \ref{principal}. 

Consider the  {manifold $S = \mathcal{O}$}, the Hilbert space $\mathcal{H}=\mathcal{H}_P^K$, and the $C^\ast$-algebra
\[ \mathcal{A} = \mathcal{C}_0(S) \otimes \mathfrak{K}(\mathcal{H}),\]
where $\mathfrak{K}(\mathcal{H})$ still stands for the algebra of compact operators on $\mathcal{H}=\mathcal{H}_P^K$, and $\mathcal{C}_0(S)$ is the $C^\ast$-algebra of continuous functions on $S$ which vanish at infinity (of course, $S$ is noncompact only when $F=\mathbb{R}$). Bring in the finite group
\[ \Gamma = \widetilde{W}_\sigma\]
of \eqref{wtilde}. The straightened intertwining operators of Proposition \ref{cocycle_rel}  give rise to a map
\begin{align*} u\colon  \Gamma & \to \mathcal{C}_0(S, \mathcal{U}(\mathcal{H}))\\ w & \mapsto \left[ \chi \mapsto \mathfrak{a}(w, \sigma \otimes \chi)\right],\end{align*} 
where $ \mathcal{U}(\mathcal{H}_P^K)$ is the group of unitary operators of $\mathcal{H}$. The map $u$ is a $1$-cocycle of $\Gamma$ with values in $\mathcal{C}_0(S, \mathcal{U}(\mathcal{H}))$. To such a cocycle, Leung and Plymen associate an action
\[ \beta\colon \Gamma \to \mathrm{Aut}(\mathcal{A}),\]
where $\beta(w)$ sends a split element $\varphi \otimes M$ of $\mathcal{A}= \mathcal{C}_0(S) \otimes \mathfrak{K}(\mathcal{H})$ to the element {$\mathrm{Ad}(u(w))(\varphi(w^{-1} \cdot) \otimes M)$}. See \cite[\S 2.11]{LeungPlymen}.

The results proven above, especially Proposition \ref{prop_res}, indicate that we are in a position to apply the results of \cite{LeungPlymen}, Section 2. Consider the modification of $\beta$ built with a modification of the cocycle $u$, where we ``freeze'' $\chi$: this is the map 
\[ \gamma\colon \Gamma \to \mathrm{Aut}(\mathcal{A})\]
{associated with the cocycle  $v\colon \Gamma \to \mathcal{C}_0(S, \mathcal{U}(\mathcal{H}))$ given by $v(w) = \left(\chi \mapsto  u(w)(1_S)\right)$ for all $w \in \Gamma$. } We can apply Lemma 2.11 of \cite{LeungPlymen}, and conclude that  that the fixed-point algebras $\mathcal{A}^\beta$ and $\mathcal{A}^\gamma$ are Morita-equivalent:
\[ \mathcal{A}^\beta \underset{\text{Morita}}{\sim} \mathcal{A}^\gamma.  \]
{To see why \cite[Lemma 2.11]{LeungPlymen} can be applied here, we refer to  \cite[\S 3.5, 3.6 and 3.7]{LeungPlymen}: all the arguments there apply to our situation.  We mention that in \cite[\S 2]{LeungPlymen}, it is assumed that $S$ is a compact torus. But the proofs of all results of \cite[\S 2]{LeungPlymen} needed for the above application are valid also when $S$ is a real finite-dimensional vector space, using the fact that $\mathcal{C}_0(S)$ is then a $\sigma$-unital algebra.}

The $C^\ast$-algebra $\mathcal{A}^\beta$ of the left-hand side is isomorphic with the $C^\ast$-algebra of continuous maps $f\colon S \to \mathfrak{K}(\mathcal{H})$ such that $f(w \chi) = \mathfrak{a}(w, \sigma \otimes \chi)^{-1} f(\chi)\mathfrak{a}(w, \sigma \otimes \chi)$ for all $\chi \in T$ and $w \in \Gamma$. Plymen proved in \cite[Theorem 2.5]{Plymen1} that the latter algebra is isomorphic with $C^\ast_r(G; \Theta)$. We thus have the Morita equivalence
\begin{equation} \label{soll1} C^\ast_r(G;\Theta) \quad \underset{\text{Isom.}}{\simeq} \quad \mathcal{A}^\beta \quad \underset{\text{Morita}}{\sim} \quad \mathcal{A}^\gamma \underset{\text{Isom.}}{\simeq} \quad \left( \mathcal{C}_0(S) \otimes \mathfrak{K}(\mathcal{H})\right)^{\widetilde{W}_\sigma}  \end{equation}
{where, in the algebra on the right, the action of $\widetilde{W}_{\sigma}$ on $\mathcal{C}_0(S) \otimes \mathfrak{K}(\mathcal{H})$ is that where an element $w \in \widetilde{W}_{\sigma}$ sends a map $f\colon S \to \mathfrak{K}(\mathcal{H})$ to the map $\chi \mapsto \mathfrak{a}(w, \sigma) f(w\chi)\mathfrak{a}(w, \sigma)^{-1}$.}

Now, the elements of $W_\sigma'$ act trivially on $\mathfrak{K}(\mathcal{H}^K_P)$. It follows that

\begin{equation} \label{soll2} C^\ast_r(G;\Theta) \quad \underset{\text{Isom.}}{\simeq} \quad \mathcal{A}^\beta \underset{\text{Morita}}{\sim} \quad \left( \mathcal{C}_0(S/W_{\sigma}') \otimes \mathfrak{K}(\mathcal{H}^K_P)\right)^{\widetilde{R}_\sigma}  \end{equation}
as in \cite{LeungPlymen}, \S 2.13.

\subsection{Real groups: proof of Theorem \ref{th_wass}} \label{preuve_wass}

In this paragraph, we assume $F = \mathbb{R}$, and explain how Wassermann's theorem for real groups can be deduced from the isomorphism \eqref{soll2}. We emphasize the fact that the result is classical, and that our method is not new: this paragraph merely says how the above results  make it easy to fill in the details of Wassermann's proof in \cite{Wassermann}. Our remarks here can be viewed as an addendum to Leung and Plymen's paper \cite{LeungPlymen}, which itself unravels Wassermann's argument. 

For real groups, the 2-cocycle $\eta_\sigma$ of Proposition \ref{cocycle_twiste} always splits (see \cite[\S 8]{KnappStein} and \cite[\S 2]{ArthurActa}). Therefore we may take the central extension $\widetilde{R}_\sigma$ of \S \ref{extension_centrale} to be just $R_\sigma$. Besides, we know from Knapp and Stein \cite{KnappStein} that the $R$-group $R_\sigma$ is an abelian elementary $2$-group. Just as Leung and Plymen did in \cite[\S 2.13]{LeungPlymen}, we can therefore use Takai duality and Lemma \ref{artq} to see that the right-hand side of \eqref{soll2} is equivalent with $\mathcal{C}_0({S/W'_\sigma}) \rtimes R_\sigma = \mathcal{C}_0(\mathcal{O}/W'_\sigma) \rtimes {R_\sigma}$. This completes the proof of Theorem \ref{principal} when $F=\mathbb{R}$.

We now indicate how Wassermann's result, Theorem \ref{th_wass}, can be obtained from Theorem \ref{principal}. What remains to be seen is that for real groups, the basepoint $\sigma$ may always be chosen so as to satisfy Assumptions \ref{hyp_pt_fixe} and~\ref{hyp_inclusions}. 

Let us call in the ``Langlands decomposition'' of the Levi subgroup $M$: let $A$ be the split component of $M$, and let $M^0$ be the subgroup of $M$ generated by all compact subgroups of $M$. Then we have the direct product decomposition $M = M^0 A$ . Therefore, if $\sigma$ is a discrete series representation of $M$, the restriction of $\sigma$ to $A$ determines a unitary unramified character of $M$. Twisting $\sigma$ by the inverse of that character, we obtain a representation $\sigma_0$ in the orbit $\mathcal{O}$: the representation $\sigma_0$ is in fact the trivial extension to $M$ of a discrete series representation of $M^0$. 

Assumption \ref{hyp_pt_fixe} is then immediate for $\sigma_0$: since that representation is trivial on $A$, all representations $^w \sigma_0$, for $w \in W(G,M)$, are trivial on $A$ as well, and the action of $W_\Theta$ on $\mathcal{O}$ admits $\sigma_0$ as a fixed point. 

As for Assumption \ref{hyp_inclusions}, the corresponding results were established by Knapp and Stein in \cite[\S 13-14]{KnappStein}. For $\chi \in \mathcal{X}_u(M)$, they describe in the proof of \cite[Lemma 14.1]{KnappStein} an injection $R_{\sigma_0 \otimes \chi} \hookrightarrow R_{\sigma_0}$; a crucial part of argument is the fact that for $w \in W_\Theta$, when one considers the restriction $\mathfrak{a}(w, \sigma \otimes \chi)$ to a finite sum of $K$-types, the characteristic polynomial of the restriction must remain constant as $\chi$ varies, provided $\chi$ remains in the subspace of characters satisfying $w\chi = \chi$. The inclusion $W'_{\sigma_0 \otimes \chi} \subset W'_{\sigma_0}$ follows from the same argument with characteristic polynomials.  Therefore parts (a) and (b) of Assumption \ref{hyp_inclusions} are satisfied for the base-point $\sigma=\sigma_0$ in $\mathcal{O}$,  and we obtain Wassermann's Theorem \ref{th_wass} for real groups.

\subsection{$p$-adic groups: proof of Theorem \ref{principal}} \label{preuve_padique}
We now assume $F$ to be nonarchimedean, and complete the proof of our theorem for $p$-adic groups. Since the central extension in the isomorphism \eqref{soll2} can now be nontrivial, we must bring in a slight modification to the Leung--Plymen work to obtain Theorem \ref{principal}. We will use ideas that have been spelt out by Opdam and Solleveld~\cite{OpdamSolleveld}.

The $C^\ast$-algebra on the right-hand side of  \eqref{soll2} is of a kind studied in \cite[\S 3]{OpdamSolleveld}, especially Theorem 3.2 there. Consider 
\[ \mathcal{T} = S/W'_\sigma\]
and
\[ \mathcal{G} = \widetilde{R}_\sigma.\]
We have been studying two representations of $\mathcal{G}$: one on
\[ V = \mathcal{H}^K_P\]
through the normalized intertwining operators, and another one on the twisted group algebra
\[ V' = \mathbb{C}\left[{R}_\sigma, \eta_\sigma \right]\]
through the representation $\mathrm{Ind}_{Z_\sigma}^{\widetilde{R}_\sigma}(\zeta_\sigma)$. {To see where that representation comes from, recall that if $\mathbb{C}\left[ \tilde{R}_\sigma\right]$ is the group algebra for the extended group $\tilde{R}_\sigma$, and if $\tilde{p}$ is the idempotent in $\mathbb{C}\left[\tilde{R}_\sigma\right]$ given by projection on its isotypical component corresponding to $\mathrm{Ind}_{Z_\sigma}^{\widetilde{R}_\sigma}(\zeta_\sigma)$, then  $ \tilde{p}~\mathbb{C}\left[\tilde{R}_\sigma\right] \simeq \mathbb{C}\left[{R}_\sigma, \eta_\sigma \right]$. See for instance \cite[\S 2.1]{ABPSOrsay}}. 

What we need to prove is that the $C^\ast$-algebra
\begin{equation} \label{mor1}  \left( \mathcal{C}(\mathcal{T}) \otimes \mathfrak{K}(V)\right)^{\mathcal{G}}\end{equation}
on the right-hand side of  \eqref{soll2} is Morita-equivalent with the $C^\ast$-algebra 
\begin{equation} \label{mor2} \widetilde{p} \left[\ \mathcal{C}(\mathcal{O}/W'_{\sigma}) \rtimes \widetilde{R}_{\sigma} \right] \underset{\text{isomorphism}}{\simeq} \left( \mathcal{C}(\mathcal{T}) \otimes \mathrm{End}(V')\right)^{\mathcal{G}}\end{equation}
of Theorem \ref{principal}. (For the isomorphism between the two algebras in \eqref{mor2}, and for twisted crossed products and central idempotents attached to group extensions, see  \cite[pp. 700-701]{OpdamSolleveld} and \cite[\S 2.1]{ABPSOrsay}.)

{Proposition \ref{prop_res} shows that the representations of $\mathcal{G}$ on $V$ and $V'$ are quasi-equivalent. Theorem \ref{principal} would then follow from Theorem 3.2 in \cite{OpdamSolleveld} if we could apply it. The problem is that  $V$ is infinite-dimensional, whereas Theorem 3.2 in \cite{OpdamSolleveld} is formulated for finite-dimensional $V$ and $V'$ (and uses $\mathcal{C}^\infty(\mathcal{T})$ rather than $\mathcal{C}(\mathcal{T})$). We shall prove, however, that the same conclusion holds in our $C^\ast$-algebraic context:}

\begin{prop}\label{morita}
Let $\mathcal{G}$ be a finite group, let $M$ be a compact $\mathcal{G}$-space, and let $(V, \pi)$ and $(V', \pi')$ be two unitary representations of $\mathcal{G}$. If $(V, \pi)$ and $(V', \pi')$ are quasi-equivalent as representations of $\mathcal{G}$, then $A = (\mathcal{C}(M) \otimes \mathfrak{K}(V))^\mathcal{G}$ and $B = (\mathcal{C}(M) \otimes \mathfrak{K}(V'))^\mathcal{G}$ are Morita-equivalent as $C^\ast$-algebras. 
\end{prop}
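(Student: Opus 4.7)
The strategy is to realize both $A$ and $B$ as equivariant section algebras and to build an explicit Morita equivalence bimodule between them, with quasi-equivalence entering exactly at the step of checking fullness. I would first use quasi-equivalence to decompose
\[ V = \bigoplus_{\rho \in \Sigma} \rho \otimes H_\rho, \qquad V' = \bigoplus_{\rho \in \Sigma} \rho \otimes H'_\rho \]
into $\mathcal{G}$-isotypic components, where $\Sigma \subset \widehat{\mathcal{G}}$ is the common set $\{\rho : H_\rho \neq 0\} = \{\rho : H'_\rho \neq 0\}$. A simple but crucial preliminary observation is that quasi-equivalence is preserved under restriction to any subgroup $\mathcal{H} \subseteq \mathcal{G}$: a $\mathcal{H}$-irreducible $\tau$ occurs in $V|_\mathcal{H}$ if and only if it occurs in $\mathrm{Res}^\mathcal{G}_\mathcal{H}(\rho)$ for some $\rho \in \Sigma$, if and only if it occurs in $V'|_\mathcal{H}$.

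Next I would introduce the candidate bimodule
\[ E = \bigl(\mathcal{C}(M) \otimes \mathfrak{K}(V', V)\bigr)^\mathcal{G}, \]
where $\mathcal{G}$ acts on $\mathfrak{K}(V', V)$ by $g \cdot T = \pi(g) T \pi'(g)^{-1}$ combined with its action on $M$. Its elements are continuous equivariant sections $\xi\colon M \to \mathfrak{K}(V', V)$; the left $A$-action and right $B$-action are pointwise composition, and the two $C^\ast$-valued inner products are
\[ {}_A\langle \xi, \eta\rangle(m) = \xi(m)\eta(m)^*, \qquad \langle \xi, \eta\rangle_B(m) = \xi(m)^*\eta(m). \]
All Hilbert bimodule axioms (bimodule associativity, compatibility of the two inner products, positivity) follow from their pointwise counterparts for the standard Morita equivalence bimodule $\mathfrak{K}(V', V)$ between $\mathfrak{K}(V)$ and $\mathfrak{K}(V')$; only fullness of the two inner products requires genuine argument.

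The fullness check is the crux, and I would carry it out locally on $M/\mathcal{G}$. For each $m \in M$ with stabilizer $\mathcal{G}_m$, a $\mathcal{G}$-invariant cut-off supported near the orbit $\mathcal{G}\cdot m$ allows one to extend any element of $\mathfrak{K}(V', V)^{\mathcal{G}_m}$ to a global equivariant section, yielding surjections $A \twoheadrightarrow \mathfrak{K}(V)^{\mathcal{G}_m}$ and $E \twoheadrightarrow \mathfrak{K}(V', V)^{\mathcal{G}_m}$ by evaluation at $m$. Decomposing $V|_{\mathcal{G}_m}$ and $V'|_{\mathcal{G}_m}$ into $\mathcal{G}_m$-isotypes and applying Schur's lemma, one identifies
\[ \mathfrak{K}(V)^{\mathcal{G}_m} = \bigoplus_\tau \mathfrak{K}(W_\tau), \qquad \mathfrak{K}(V', V)^{\mathcal{G}_m} = \bigoplus_\tau \mathfrak{K}(W'_\tau, W_\tau), \]
with $W_\tau \neq 0 \Leftrightarrow W'_\tau \neq 0$ by the preliminary restriction observation. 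Consequently the products $TS^*$ and $T^*S$ with $T, S \in \mathfrak{K}(V', V)^{\mathcal{G}_m}$ already have dense span in $\mathfrak{K}(V)^{\mathcal{G}_m}$ and $\mathfrak{K}(V')^{\mathcal{G}_m}$. A standard partition-of-unity argument on the compact Hausdorff quotient $M/\mathcal{G}$ then upgrades this pointwise density to norm-density of ${}_A\langle E, E\rangle$ in $A$ and of $\langle E, E\rangle_B$ in $B$.

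The step I expect to be the main obstacle is the local-to-global passage: while the representation-theoretic content reduces to a one-line application of Schur's lemma, one must simultaneously control continuity on $M$ and $\mathcal{G}$-equivariance when lifting cut-off functions from $M/\mathcal{G}$ and propagating stabilizer-invariant operators along orbits whose isotropy type can vary with $m$. This is classical machinery in the spirit of Rieffel's work on proper actions and induced representations, but warrants explicit verification in the present setting.
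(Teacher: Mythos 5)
Your proof is correct and follows essentially the same route as the paper's: the same bimodule $(\mathcal{C}(M)\otimes\mathfrak{K}(V,V'))^\mathcal{G}$ with the same two inner products, the same reduction of fullness to a local statement over each stabilizer $\mathcal{G}_m$, and the same mechanism (bump functions plus averaging over $\mathcal{G}$) for realizing stabilizer-equivariant operators as values of global equivariant sections. One small stylistic difference worth noting: for the local representation-theoretic step, you go directly to the multiplicity-space identification $\mathfrak{K}(V)^{\mathcal{G}_m}\cong\bigoplus_\tau\mathfrak{K}(W_\tau)$ and $\mathfrak{K}(V',V)^{\mathcal{G}_m}\cong\bigoplus_\tau\mathfrak{K}(W'_\tau,W_\tau)$ via Schur's lemma, which is exactly the streamlined reorganization that the paper records as a referee's remark after its own, more hands-on argument with finite-rank approximation. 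For the local-to-global passage, you invoke a $\mathcal{G}$-invariant partition of unity on $M/\mathcal{G}$ rather than the Stone--Weierstrass-type theorem (Prolla) that the paper cites; both work, and your version is self-contained since $\varphi_i^{1/2}\xi\in E$ for $\mathcal{G}$-invariant $\varphi_i\geq 0$ and $\xi\in E$, so the convex combinations stay inside the span of the inner products.
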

\begin{proof}
{Consider  
\[ X = (\mathcal{C}(M) \otimes \mathfrak{K}(V; V'))^\mathcal{G}\] 
viewed as the space of continuous maps $M \to \mathfrak{K}(V; V')$ satisfying $f(\gamma^{-1} m) = \pi'(\gamma)^{-1}  f(m)  \pi(\gamma)$ for  $m \in M$, $\gamma \in \mathcal{G}$. }We will prove that $X$ is a Morita equivalence bimodule  in the sense of \cite{Rieffel}.

First note that $X$ is naturally a $(B,A)$-bimodule: if $f \colon M \to \mathfrak{K}(V; V')$ is an element of $X$, and if $a$ is an element of $A$, viewed as a $\mathcal{G}$-equivariant map $M \to \mathfrak{K}(V)$, then we define  $(fa) \in X$ as the map $m \mapsto f(m)  a(m)$.  

By equipping $X$ with the $A$-valued inner product that sends $f,g \in X$ to the element $\langle f, g\rangle_{A}$ of $A$ defined as 
\[ \langle f,g\rangle_{A}: m \mapsto f(m)^\ast g(m),\]
 we obtain on $X$ the structure of a \emph{Hilbert} right $A$-module (see \cite{Lance}). By similarly using pointwise composition to turn $X$ into a left $B$-module, and equipping $X$ with the $B$-valued inner product  $ _{B}\langle f, g\rangle: m \mapsto  f(m)g(m)^\ast$, we see that $X$ is a Hilbert $(B,A)$-bimodule that satisfies 
  \[ f~\langle g, h\rangle_{A} = ~ _{B}\langle f, g\rangle~h \quad \text{ for all $f$, $g$, $h$ in $X$.}\]
To show that $X$ implements a Morita-equivalence between $A$ and $B$, it is therefore enough to prove that 
\[ \langle X, X\rangle_{A} = \text{linear span of the elements $\langle f,g\rangle_{A}$, $f, g \in X$}\]
is dense in $A$ (and of course that $ _{B}\langle f, g\rangle$ is dense in $B$, but the same proof will work). 

We now prove that density. 

\begin{itemize}
\item[$\bullet$] Let us first fix $m$ in $M$ and collect some information on the value at $m$ of $\langle f,g\rangle_{A}$, for $f, g$ in $X$. Write $\mathcal{G}_m$ for the stabilizer of $m$ in $\mathcal{G}$. Because $f$ and $g$ are $\mathcal{G}$-equivariant as maps $M \mapsto \mathfrak{K}(V,V')$, the operators  $f(m)$ and $g(m)$ in $\mathfrak{K}(V,V')$ are  $\mathcal{G}_m$-equivariant.  
Write 
\begin{equation} \label{deco} V = \bigoplus \limits_{\rho \in \mathrm{Irr}(\mathcal{G}_m)} V_{\rho} \quad  \text{and}\quad V' = \bigoplus \limits_{\rho \in \mathrm{Irr}(\mathcal{G}_m)} V'_{\rho} \end{equation}
for the decomposition of $(V,\pi_{|\mathcal{G}_m})$ and $(V',(\pi')_{|\mathcal{G}_m})$ into isotypical subspaces for the irreducible representations of $\mathcal{G}_m$. Note that our quasi-equivalence hypothesis means that  $V_\rho = \{0\}$ if and only if $V'_{\rho} = \{0\}$. Now, the operators $f(m)$ and $g(m)$ are $\mathcal{G}_m$-equivariant, and we have $\mathrm{Hom}_{\mathcal{G}_m}(V_\rho,V_{\rho'})=\mathrm{Hom}_{\mathcal{G}_m}(V_\rho,V'_{\rho'})=\mathrm{Hom}_{\mathcal{G}_m}(V'_\rho,V'_{\rho'})=\{0\}$ whenever $\rho \neq \rho'$. Therefore the value $\varphi=\langle f,g\rangle_{A}(m)$ is an element of $\mathfrak{K}(V)^{\mathcal{G}_m}$ which can be written as
\begin{equation} \label{forme} \varphi=\sum \limits_{\rho \in \mathrm{Irr}(\mathcal{G}_m)} f_{m,\rho}^\ast g_{m, \rho},  \quad \text{where $f_{m, \rho}$ and $g_{m, \rho}$ are elements of $\mathfrak{K}(V_\rho,V'_\rho)^{\mathcal{G}_m}$. }\end{equation}

\item[$\bullet$]  Let us keep $m \in M$ fixed, and consider an element $\varphi$ of $\mathfrak{K}(V)^{\mathcal{G}_m}$. We now prove that $\varphi$ can be written as a limit of linear combinations of elements satisfying \eqref{forme}. Taking up the decomposition of $V$ in \eqref{deco}, write  $\varphi$ as $\sum \limits_{\rho  \in \mathrm{Irr}(\mathcal{G}_m)} \varphi_\rho$, where each $\varphi_{\rho}$ belongs to $\mathfrak{K}(V_\rho)^{\mathcal{G}_m}$. Since $\mathrm{Irr}(\mathcal{G}_m)$ is a finite set, it is enough to prove our claim separately for each $\varphi_\rho$. Furthermore, since the operator  $\varphi$ is compact, it is enough to prove our claim when $\varphi$ has finite rank. 

Let us therefore assume that  $\varphi$ is a nonzero element in $\mathfrak{K}(V_\rho)^{\mathcal{G}_m}$, and that it has finite rank. Decompose its range into irreducible representations of $\mathcal{G}_m$, writing   
\[ \varphi(V_\rho) = \sum \limits_{j \in J} V_{j}\]
where $J$ is a finite set and each  $V_j$ is a $\mathcal{G}_m$-stable subspace of $V_\rho$ with dimension $\dim(\rho)$. 

For each $j \in J$, the inverse image $A_j=\varphi^{-1}(V_j)$ is a closed subspace of $V_\rho$, and it is $\mathcal{G}_m$-stable. Furthermore, the kernel of  $\varphi_{|A_j}$ has codimension $\dim(V_j) = \dim(\rho)$. Let $E_j$ be the orthogonal of $\mathrm{Ker}( \varphi_{|A_j})$ in $A_j$; then $E_j$ carries an irreducible representation of $\mathcal{G}_m$ of class $\rho$. 

We are now in a position to use our quasi-equivalence assumption and bring  $V'$ back into the picture. Fix a $\mathcal{G}_m$-stable subspace $V'_{\rho, \star}$ of $V'_\rho$ with dimension $\dim(\rho)$: the existence of such a subspace comes from the quasi-equivalence assumption. Now, we have three irreducible representations  of $\mathcal{G}_m$, namely $E_j$, $V'_{\rho,\star}$ and $V'_j$; and all three are equivalent and of class $\rho$. Therefore, there are intertwining maps $g_{j}\colon E_j \to V'_{\rho, \star}$ and $f_j\colon V_{j} \to V'_{\rho, \star}$. By considering $f_{j}^\ast g_j$ and $\varphi_{|E_j}$, we then obtain two elements of  $\mathrm{Hom}_{\mathcal{G}_m}(E_j, V_j)$. 

But the space $\mathrm{Hom}_{\mathcal{G}_m}(E_j, V_j)$ is one-dimensional, by Schur's lemma. We deduce that  $f_{j}^\ast g_j$ and $\varphi_{|E_j}$ are proportional; since  $\varphi$ is zero on the orthogonal of $\bigoplus \limits_{j \in J} E_j$, we obtain the equality
\[ \varphi = \sum \limits_{j \in J} \lambda_j f_j^\ast g_j\]
where the $\lambda_j$, $j \in J$, are scalars, and where $g_j$ is extended to $V_\rho$ by setting it to zero on the orthogonal of $E_j$. This proves our claim that $\varphi$ can be written as a combination of elements satisfying \eqref{forme}. 

{As one referee pointed out, it is possible to organize the proof for our claim in such a way that algebra and analysis become well separated. Write $V_\rho$ as $E \otimes H_\rho$, where $E$ is a carrier space for $\rho$ and $H_\rho$ is a Hilbert space on which $\mathcal{G}_m$ acts trivially. Write $V_\rho'=E\otimes H'_\rho$ in the same way.  Then a rather straightforward application of Schur's lemma shows that the spaces $\mathfrak{K}(V_\rho,V'_\rho)^{\mathcal{G}_m}$ and $\mathfrak{K}(H_\rho,H'_\rho)$ are isomorphic. To check our claim that $\varphi$ can be written as a limit of linear combinations of elements satisfying (5.7), it is therefore enough to check the same property for elements of $\mathfrak{K}(H_\rho)$ and $\mathfrak{K}(H_\rho,H'_\rho)$, which is easy and elementary.     }

\item[$\bullet$] Up to this point, we have kept the point $m$ fixed in $M$, and proved that the value at $m$ of every element of $A$ can be appproximated by elements of the form \eqref{forme}. We must now deal with the fact that the elements of $A$ and $\langle X, X\rangle_A$ are maps on $M$.  

Notice that if $f_\star$ is an element of $\mathfrak{K}(V,V')$, there exists a continuous map $\tilde{f}\colon M \to \mathfrak{K}(V,V')$ whose value at $m$ is $f_\star$, and which vanishes outside a small neighborhood of $m$. Averaging over the $\mathcal{G}$-transforms of $\tilde{f}$, we obtain an element $f$ of $X = \mathcal{C}(M, \mathfrak{K}(V; V'))^\mathcal{G}$. If  the element $f_\star$ we started from is $\mathcal{G}_m$-equivariant, then the value of ${f}$ at $m$ is still equal to $f_\star$.
 
We can apply the above remark to the operators $f_j$ and $g_j$ above, viewed as elements in $\mathfrak{K}(V, V')$, and extend them to $\mathcal{G}$-equivariant continuous maps $M \to \mathfrak{K}(V, V')$. From the work we did at fixed $m$, we deduce the following statement: 
 \begin{equation} \label{ponctuel} \text{For every $\varepsilon >0$,  each $s \in S$ and $a \in A$, there exists $\alpha \in \langle X, X\rangle_{A}$ satisfying $\left\Vert{a(m)-\alpha(m)}\right\Vert<\varepsilon$.}\end{equation}
To conclude that  $\langle X, X\rangle_{A}$ is dense in $A$, we need to pass from the pointwise approximation property in \eqref{ponctuel} to a uniform approximation property for maps  $M \to \mathfrak{K}(V, V')$. This can be done by applying a suitable version of the Stone-Weierstrass theorem; see for instance \cite[Theorem 1]{Prolla}. \qedhere
\end{itemize}
\end{proof}
Applying Proposition \ref{morita} to $M = \mathcal{T}$ completes the proof of Theorem \ref{principal}. 

\newpage

\part{Fixed points and good fixed points for quasi-split classical groups}\label{part2}

Throughout Part \ref{part2}, we assume $F$ to be a $p$-adic field. 

\section{Existence of fixed points for symplectic, orthogonal and unitary groups} \label{exist_pt_fixe}

\subsection{A preliminary remark on $\mathrm{GL}(n,F)$}

\begin{lemm} \label{contrag} Let $\sigma$ be a discrete series representation of $ \mathrm{GL}(n,F)$, $ n \geq 1$. 

\begin{itemize}
\item[$\bullet$] Write $\widecheck{\sigma}$ for the representation $g \mapsto \sigma(^t g^{-1})$ of $\mathrm{GL}(n,F)$.  If there exists a unitary unramified character $\chi$ of $\mathrm{GL}(n,F)$ such that $\widecheck{\sigma} \simeq \sigma \otimes \chi$, then there exists another character $\nu \in \mathcal{X}_u(\mathrm{GL}(n,F))$ such that $\tau = \sigma \otimes \nu$ satisfies $\widecheck{\tau}\simeq\tau$. 

\item[$\bullet$] The above remains true if $\widecheck{\sigma}$ is replaced by the representation $g \mapsto \sigma(^\tau g^{-1})$ of $\mathrm{GL}(n,F)$, in which $^\tau(\dots)$ denotes transposition with respect to the off-diagonal. 
\end{itemize}
\end{lemm}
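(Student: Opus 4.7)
The plan is to exploit the simple identity
\[
\widecheck{\sigma \otimes \nu} \simeq \widecheck{\sigma} \otimes \nu^{-1}
\]
which holds for any unramified character $\nu$ of $\mathrm{GL}(n,F)$: indeed, $\det(\,{}^t g^{-1}) = (\det g)^{-1}$, and characters of $\mathrm{GL}(n,F)$ factor through $\det$. Setting $\tau = \sigma \otimes \nu$, the requirement $\widecheck{\tau} \simeq \tau$ then rewrites as $\widecheck{\sigma} \simeq \sigma \otimes \nu^2$. Under the hypothesis $\widecheck{\sigma} \simeq \sigma \otimes \chi$, it therefore suffices to produce $\nu \in \mathcal{X}_u(\mathrm{GL}(n,F))$ with $\nu^2 = \chi$.

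The key structural input is that $\mathcal{X}_u(\mathrm{GL}(n,F))$ is a divisible group. A unitary unramified character of $\mathrm{GL}(n,F)$ is simply a map of the form $g \mapsto |\det g|^{is}$ for some $s \in \mathbb{R}$, so $\mathcal{X}_u(\mathrm{GL}(n,F))$ is isomorphic to the one-dimensional compact torus $\mathbb{R}/(2\pi/\log q)\mathbb{Z}$. Squaring is surjective on this torus, so the desired $\nu$ exists, and $\tau = \sigma \otimes \nu$ answers the first bullet.

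For the second bullet, the plan is to reduce it directly to the first, by identifying the representation $g \mapsto \sigma(\,{}^\tau g^{-1})$ with $\widecheck{\sigma}$ up to an inner twist. Let $J \in \mathrm{GL}(n,F)$ denote the anti-diagonal identity matrix: then $J^2 = I$, and the elementary matrix identity ${}^\tau g = J \cdot {}^t g \cdot J$ gives ${}^\tau g^{-1} = J \cdot {}^t g^{-1} \cdot J$. Applying $\sigma$ then yields
\[
\sigma(\,{}^\tau g^{-1}) = \sigma(J)\,\widecheck{\sigma}(g)\,\sigma(J)^{-1},
\]
so that $g \mapsto \sigma(\,{}^\tau g^{-1})$ is equivalent to $\widecheck{\sigma}$ via the intertwining operator $\sigma(J)$. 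The hypothesis of the second bullet is therefore equivalent to that of the first, and the same $\nu$ answers it. There is no substantive obstacle in this proof; the only points requiring a little care are the bookkeeping of the identity ${}^\tau g = J \cdot {}^t g \cdot J$ and the explicit description of $\mathcal{X}_u(\mathrm{GL}(n,F))$ needed to invoke divisibility.
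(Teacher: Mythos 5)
Your proof is correct and follows the same route as the paper: reduce to solving $\nu^2 = \chi$ using $\widecheck{\sigma \otimes \nu} \simeq \widecheck{\sigma}\otimes\nu^{-1}$, then handle the second bullet via conjugation by the anti-diagonal involution $J$. The only difference is that you spell out why a square root $\nu$ of $\chi$ exists (identifying $\mathcal{X}_u(\mathrm{GL}(n,F))$ as a circle, on which squaring is surjective), whereas the paper simply writes $\nu=\chi^{1/2}$ and leaves this implicit.
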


\begin{proof}
\begin{itemize}
\item[$\bullet$] Let us first note that if $\nu$ is a unitary unramified character of $G=\mathrm{GL}(n,F)$, then $\widecheck{\nu} = \nu^{-1}$. Indeed, for every $g \in G$, we have $\widecheck{\nu}(g) = \nu(^t\!g)^{-1}$; {but since every character of $\mathrm{GL}(n,F)$ factors through the determinant, we have $\nu(^t\!g) = \nu(g)$ for all $g$.} 

\item[$\bullet$] Now, assume $\widecheck{\sigma} \simeq \sigma \otimes \chi$, where $\chi \in \mathcal{X}_u(\mathrm{GL}(n,F))$. Let $\nu = \chi^{1/2}$ be a square-root of $\chi$. We then have
\[ \widecheck{(\sigma \otimes \nu)} = \widecheck{\sigma} \otimes \widecheck{\nu} = \sigma \otimes (\chi \widecheck{\nu}) = \sigma \otimes (\chi \nu^{-1}) = \sigma \otimes (\chi \chi^{-1/2}) = \sigma \otimes \chi^{1/2} = \sigma \otimes \nu\]
as announced. 
\item[$\bullet$] Remark that for every $n\times n$ matrix $A$, we have $^\tau(A) = J_n\, ^t\!(A) \, J_n^{-1}$, where $J_n$ is the matrix whose entries are $1$ on the off-diagonal and $0$ elsewhere. It is then clear that the above argument goes through if we replace $^t\!g$ by $^\tau\!g$ everywhere.\qedhere
\end{itemize}
\end{proof}
\subsection{Symplectic groups} We now fix a positive integer $n$ and consider the symplectic group $G=\mathrm{Sp}(2n,F)$.

 \subsubsection{Levi subgroups and their Weyl groups} \label{levi}
 
Let us recall a description of the Levi subgroups of $G$, and their attached Weyl groups (see \cite{GoldbergSPN}).

\begin{itemize}
\item[$\bullet$] If $M$ if a Levi subgroup of $G$, there exists an isomorphism
\[ M \simeq \mathrm{GL}(n_1,F) \times \dots \times \mathrm{GL}(n_r,F) \times M'\]
where $M'$ is either trivial or a symplectic group $\mathrm{Sp}(2q,F)$, and the integers $n_1, \dots, n_r$ satisfy  $n_1 + \dots + n_r + q = n$ (we use  $q=0$ when $M'$ is trivial). Furthermore, $M$ is conjugate in $G$ with the block-diagonal subgroup 
\begin{equation} \label{desc_levi} M_{\mathrm{std}} = \left\{ \mathrm{Diag}\left(g_1, \dots, g_r, \ \gamma , \ ^\tau\!(g_r)^{-1}, \dots, ^\tau\!(g_1)^{-1}\right), \quad g_1 \in \mathrm{GL}(n_1,F), \dots,  g_r \in \mathrm{GL}(n_r,F), \gamma \in M'\right\},\end{equation}
where $^\tau\!(\dots)$ still denotes transposition with respect to the off-diagonal. 

Henceforth we shall assume that $M = M_{\mathrm{std}}$, and write
\begin{equation} \label{desc_levi_spn} \left[ g_1, \dots, g_r, \gamma \right] \end{equation}
for the general element of $M$ that appears in \eqref{desc_levi}.

\item[$\bullet$] The Weyl group $W(G,M)$ is generated by two kinds of elements:
\begin{enumerate}[(a)]
\item permutations of ``blocks of the same size'' : if $i$ and $j$ are two integers in $\{1, \dots, r\}$ such that $n_i=n_j$, the transposition $(ij)$ acts on $M$ by exchanging $g_i$ and $g_j$ in the general element \eqref{desc_levi_spn}.
\item  sign changes  $\mathbf{c}_i$: for $i \in \{1, \dots, r\}$, the sign change $\mathbf{c}_i$ acts on the general element $ \left[ g_1, \dots, g_r, \gamma \right]$  of $M$ (in \eqref{desc_levi_spn})  by replacing $g_i$ with $^\tau\!g_{i}^{-1}$.
\end{enumerate}
 
The permutations in (a) determine a subgroup of $W(G,M)$ that is isomorphic with a subgroup  $\mathfrak{S}$ of the symmetric group $\mathfrak{S}_r$; the subgroup $\mathfrak{S}$ is generated by  those transpositions $(ij)$ such that $n_i=n_j$. The sign changes in (b) determine a normal subgroup $\mathfrak{C}$  of $W(G,M)$ that is isomorphic with  $(\mathbb{Z}/2\mathbb{Z})^r$. We have $W(G,M) = \mathfrak{S} \mathfrak{C}$, and in fact $W(G,M)$ is the semidirect product $\mathfrak{S} \ltimes \mathfrak{C}$ associated with the action of $\mathfrak{S}$ on $\mathfrak{C}$ by conjugation.

\end{itemize}

\subsubsection{Orbits and orbit types} \label{orbites} Let us now fix a discrete series representation $\sigma$ of $M$, and decompose 
\[ \sigma = \sigma_1 \otimes \dots \otimes \sigma_r \otimes \tau\] 
 where $\sigma_i \in \mathcal{E}_2(\mathrm{GL}(n_i,F))$ for $i \in \{1, \dots, r\}$, and where $\tau \in \mathcal{E}_2(M')$. If $\pi$ is a representation of one of the blocks $\mathrm{GL}(n_i,F)$, recall that we write $\widecheck{\tau}$ for the representation $g \mapsto \pi(^\tau\!g^{-1})$. 
 
We will determine the group $W_{\Theta}$ attached to $\Theta=(M, \mathcal{X}_u(M)\cdot \sigma)_{G}$. Our first step is to introduce an equivalence relation on $\{1, \dots, r\}$ by declaring that $i \sim j$ when
\begin{itemize}
\item[$\bullet$] $n_i = n_j$,
\item[$\bullet$] and $\sigma_j$ is either a twist\footnote{When $\pi$ and $\pi'$ are two representations of a group $L$, we shall use the phrase ``$\pi'$ is a twist of $\pi$''~as an shortened way of saying  that there exists $\chi \in \mathcal{X}_u(L)$ such that $\pi'$ is equivalent with $\pi \otimes \chi$.} of $\sigma_i$ or a twist of $\widecheck{\sigma_i}$.
\end{itemize}
The relation $\sim$ is indeed an equivalence relation (that is because  $\widecheck{\chi}=\chi^{-1}$, for every {$\chi \in \mathcal{X}_u(\mathrm{GL}(n_i,F))$}: see the proof of Lemma \ref{contrag}). If we consider an orbit $\Omega \subset \{1, \dots, r\}$ of that equivalence relation, and if we choose an element $i_\Omega$ in $\Omega$, then two situations can present themselves: 
\begin{itemize}
\item[$\bullet$] \emph{Case 1 : $\widecheck{\sigma_{i_\Omega}}$ is a twist of $\sigma_{i_\Omega}$}. Then it is also true that for all $j \in \Omega$, the representation $\sigma_j$ is a twist of $\sigma_{i_\Omega}$, and therefore $\widecheck{\sigma_j}$ is also a twist of $\sigma_j$. As a consequence, the property that defines case $1$ depends only on $\Omega$, not on the choice of $i_\Omega$.

If we are in that situation, we shall say that $\Omega$ is a \emph{pure orbit of type I}. 

\item[$\bullet$] \emph{Case 2 : $\widecheck{\sigma_{i_\Omega}}$ is not a twist of $\sigma_{i_\Omega}$}. Taking up the above discussion, we see that for $j \in \Omega$, the representation $\widecheck{\sigma}_j$ cannot be a twist of  $\sigma_j$. We then introduce the following subsets of~$\Omega$:
\[ \Omega^{\mathrm{per}} = \left\{ j \in \Omega \ : \ \sigma_j \text{ is a twist of } \sigma_{i_\Omega} \right\},\]
\[ \Omega^{\mathrm{flip}} = \left\{ j \in \Omega \ : \ \sigma_j \text{ is a twist of } \widecheck{\sigma_{i_\Omega}} \right\}.\]
These are clearly disjoint subsets, and we have  $\Omega = \Omega^{\mathrm{per}} \cup \Omega^{\mathrm{flip}}$. 

When $\Omega^{\mathrm{flip}}$ is empty, we shall say that  $\Omega$ is a \emph{pure orbit of type II}. When it is nonempty, we shall say that $\Omega$ is a \emph{mixed orbit}. As before, these notions depend only on $\Omega$, not on the choice of $i_\Omega$. When $\Omega$ is a mixed orbit, it is true that the decomposition $\Omega = \Omega^{\mathrm{per}} \cup \Omega^{\mathrm{flip}}$ can depend on the choice of $i_{\Omega}$: but if $i_\Omega$ changes, the only way in which the decomposition can be affected is that one may have to exchange the roles of  $\Omega^{\mathrm{per}}$ and $\Omega^{\mathrm{flip}}$. 
\end{itemize}

\subsubsection{Description of the group $W_\Theta$} \label{w_theta} At this point, we can exhibit some special elements in $W_\Theta$: 
\begin{enumerate}[(a)]
\item When $\Omega$ is a pure orbit of type I, the group $W_\Theta$ contains all transpositions  $(ij)$, $(i,j) \in \Omega^2$, and contains all sign changes $\mathbf{c}_i$, $i \in \Omega$. 
\item When $\Omega$ is a pure orbit of type II, the group  $W_\Theta$ contains all transpositions $(ij)$, $(i,j) \in \Omega^2$, but none of the sign changes $\mathbf{c}_i$, $i \in \Omega$. 
\item When $\Omega$ is a mixed orbit, the group $W_\Theta$ contains none of the sign changes $\mathbf{c}_i$, $i \in \Omega$, but: 
\begin{enumerate}[(i)]
\item if $i$ and $j$ are integers that lie either both in  $\Omega^{\mathrm{per}}$, or both in $\Omega^{\mathrm{flip}}$, we have $(ij) \in W_\Theta$;
\item if $i$ and $j$ have the property that one of them lies in  $\Omega^{\mathrm{per}}$ and the other lies in $\Omega^{\mathrm{flip}}$, then we have $ (ij)\mathbf{c}_i \mathbf{c}_j \in W_\Theta$, but $(ij) \notin W_\Theta$ and $(ij)\mathbf{c}_i \notin W_\Theta$.
\end{enumerate}
Given two integers $i$ and $j$ in a mixed orbit $\Omega$, the fact that we are in case (i) or case (ii) above depends only on the (nonordered) pair $\{i,j\}$, and not on the choice of $i_\Omega$.
\end{enumerate}

\begin{lemm} \label{det_wt} The group $W_\Theta$ is generated by
\begin{itemize}
\item[$\bullet$] the transpositions $(ij)$ for which $n_i=n_j$ and $\sigma_i$ is a twist of $\sigma_j$,
\item[$\bullet$] the sign changes $\mathbf{c}_i$ for which $\widecheck{\sigma}_i$ is a twist of $\sigma_i$,
\item[$\bullet$] and the products $(ij)\mathbf{c}_i\mathbf{c}_j$ such that  $\widecheck{\sigma}_i$ is a twist of $\sigma_j$, but not of  $\sigma_i$, and not of $\widecheck{\sigma}_j$ either.
\end{itemize}
 \end{lemm}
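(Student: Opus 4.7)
The plan is to prove the two inclusions separately. For the forward direction (that each listed element lies in $W_\Theta$), I would compute ${}^w\sigma$ directly for each of the three types of generator. In every case the relevant hypothesis — equivalence of $\sigma_j$ with a twist of $\sigma_i$, or equivalence of $\widecheck{\sigma}_i$ with a twist of $\sigma_i$, or equivalence of $\widecheck{\sigma}_i$ with a twist of $\sigma_j$ — furnishes an explicit unramified unitary $\eta \in \mathcal{X}_u(M)$ satisfying ${}^w\sigma \simeq \sigma \otimes \eta$, which exhibits $w$ as an element of $W_\Theta$.

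For the reverse direction, I would take an arbitrary $w \in W_\Theta$ and write it uniquely as $w = \pi c$ with $\pi \in \mathfrak{S}$ and $c \in \mathfrak{C}$ via the semidirect decomposition of \S\ref{levi}. Comparing ${}^w\sigma$ and $\sigma \otimes \eta$ factor by factor — and using that the symplectic factor $M'$ carries no nontrivial unramified unitary character — immediately forces $\pi$ to preserve each $\sim$-orbit $\Omega$ of \S\ref{orbites}. Writing $\pi = \prod_\Omega \pi_\Omega$ and $c = \prod_\Omega c_\Omega$ accordingly, the pieces $\pi_\Omega c_\Omega$ attached to distinct orbits commute and each lies separately in $W_\Theta$. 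It therefore suffices to show, for each individual orbit $\Omega$, that $\pi_\Omega c_\Omega$ belongs to the subgroup generated by the listed elements with indices in $\Omega$.

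The case analysis then follows the three-fold classification of orbits. If $\Omega$ is pure of type I, the entire subgroup $\mathfrak{S}(\Omega) \ltimes (\mathbb{Z}/2)^\Omega$ is contained in $W_\Theta$ and is generated by transpositions of type (a) and sign changes of type (b). If $\Omega$ is pure of type II, the assumption kills all sign changes on $\Omega$, so $c_\Omega$ is trivial and $\pi_\Omega$ is a product of transpositions of type (a). The main obstacle is the mixed case, and the key observation is a parity argument: assign $\epsilon_i = +$ for $i \in \Omega^{\mathrm{per}}$ and $\epsilon_i = -$ for $i \in \Omega^{\mathrm{flip}}$; then a short computation shows that the condition $\pi c \in W_\Theta$ forces $c_i = 1$ if and only if $\epsilon_{\pi(i)} = -\epsilon_i$. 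Consequently $c_\Omega$ is uniquely determined by $\pi_\Omega$, and the projection $\pi_\Omega c_\Omega \mapsto \pi_\Omega$ identifies the restricted group $(W_\Theta)_{|\Omega}$ with $\mathfrak{S}(\Omega)$. Since $\mathfrak{S}(\Omega)$ is generated by transpositions, it remains only to lift each transposition: a transposition inside one part of $\Omega$ lifts to a generator of type (a), while a transposition $(ij)$ swapping the two parts lifts, by the parity rule, to the product $(ij)\mathbf{c}_i\mathbf{c}_j$, which is a generator of type (c). This completes the reverse inclusion.
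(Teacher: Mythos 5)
Your argument is correct and establishes the lemma, but it takes a genuinely different route from the paper's on the one nontrivial case (mixed orbits). The paper handles a mixed orbit $\Omega$ by writing $\mathbf{s}_\Omega$ as a product of disjoint cycles and running an induction on the maximal cycle length: given a cycle $\alpha$ whose support meets both $\Omega^{\mathrm{per}}$ and $\Omega^{\mathrm{flip}}$, one finds $i$ with $(i\,\alpha(i))\mathbf{c}_i\mathbf{c}_{\alpha(i)} \in W_\Theta$, multiplies by it, and observes that the resulting element projects in $\mathfrak{S}$ to a permutation with strictly shorter cycles. You instead introduce the $\pm$-colouring $\epsilon$ of $\Omega$, extract from the $W_\Theta$-condition a parity constraint that pins down $\mathbf{c}_\Omega$ uniquely from $\mathbf{s}_\Omega$, and conclude that the projection $(W_\Theta)|_\Omega \to \mathfrak{S}(\Omega)$ is a bijection whose transpositions lift precisely to the generators of types (a) and (c). This is cleaner and avoids the cycle-length induction entirely; it also makes explicit the structural fact (implicit in the paper) that on a mixed orbit $W_\Theta$ is isomorphic to a full symmetric group. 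You also supply the easy forward inclusion, which the paper silently delegates to the discussion preceding the lemma. One slip to fix: your parity rule is stated backwards — it should read ``$c_i = \mathbf{c}_i$ if and only if $\epsilon_{\pi(i)} = -\epsilon_i$'' (sign change exactly when the colour flips), since a sign change is what converts $\sigma_j$ into something twist-equivalent to $\widecheck{\sigma_j}$; the rule as you wrote it contradicts the lifts you then produce. Since your subsequent application uses the correct version, this is a typo rather than a gap, but it should be corrected.
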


 \begin{proof} 
 Let us start with an element $w$ in $W_\Theta$ and write $w = \mathbf{s}\mathbf{c}$, where $\mathbf{s} \in \mathfrak{S}$ is a permutation of $\{1, \dots, r\}$ and  $\mathbf{c} \in (\mathbb{Z}/2\mathbb{Z})^r$ belongs to the sign-change subgroup  $\mathfrak{C}$. The element $\mathbf{c}$ reads $\mathbf{c} = \gamma_1 \dots \gamma_r$, where for $i \in \{1, \dots, r\}$, the element $\gamma_i$ is either the identity or the sign change $\mathbf{c}_i$. 
 
Note that if $w$ lies in $W_\Theta$, then for every orbit $\Omega$ in $\{1, \dots, r\}$, we must have $\mathbf{s}(\Omega) = \Omega$. Set $\mathbf{s}_\Omega= \mathbf{s}_{|\Omega}$ and $\mathbf{c}_\Omega = \prod \limits_{i \in \Omega} \gamma_i$ for each orbit $\Omega$, and note that when $\Omega$, $\Omega'$ are two distinct orbits, the elements $\mathbf{s}_\Omega$ and $\mathbf{c}_{\Omega'}$ commute. We can thus decompose  $w$ as $\prod \limits_{\Omega} w_\Omega := \prod\limits_{\Omega} \mathbf{s}_\Omega \mathbf{c}_\Omega$, where the elements $w_{\Omega}=\mathbf{s}_\Omega \mathbf{c}_\Omega$ commute with one another and  all do lie in  $W_\Theta$. We shall prove that each of the elements $w_\Omega$ satisfies the conclusion of the Lemma. 
 
 Let us therefore fix an orbit $\Omega$ and inspect the situation, depending on the type of $\Omega$ : 
 \begin{enumerate}
\item If $\Omega$ is a pure orbit of type I, then each of the  $\mathbf{c}_i$, $i \in \Omega$, belongs to $W_{\Theta}$, and $w_\Omega$ is a product of elements of the two first kinds mentioned in the statement of the Lemma. So it satisfies the desired conclusion.
\item If $\Omega$ is a pure orbit of type II, then we must have $\mathbf{c}_{\Omega} = 1$, so  $w_\Omega$ must be a product of permutations of the first kind, and satisfies the desired conclusion.
\item Let us now consider the case where $\Omega$ is a mixed orbit, and write it as a union $\Omega = \Omega^{\mathrm{per}} \cup \Omega^{\mathrm{flip}}$. Let us decompose $\mathbf{s}_{\Omega}$ as a product of cycles with disjoint supports, writing $s_{\Omega} = \alpha_1 \dots \alpha_s$ where the $\alpha_i$ are cycles in $\mathfrak{S}$ whose supports are all disjoint and all contained in  $\Omega$. We can then rewrite $ w_\Omega$ as $\prod\limits_{k=1}^{s} \left( \alpha_k \mathbf{c}_{\alpha_k}\right)$, where $\mathbf{c}_{\alpha_k} = \prod \limits_{\ell \in \mathrm{Supp}(\alpha_k)} \gamma_\ell$. Here again, the various $\alpha_k \mathbf{c}_{\alpha_k}$ commute with one another, and we must have  $\alpha_k \mathbf{c}_{\alpha_k} \in W_\Theta$ for all $k$. Our last task is to show that for all $k$, the element $\alpha_k \mathbf{c}_{\alpha_k}$ satisfies the conclusion of the Lemma. 

\noindent Let us then inspect one cycle $\alpha$ among $\alpha_1, \dots, \alpha_k$.
\begin{enumerate}
\item[3.1.] If the support of $\alpha$ is either entirely contained in  $\Omega^{\mathrm{per}}$ or entirely contained in  $\Omega^{\mathrm{flip}}$, then 
\begin{itemize}
\item[$\bullet$] every transposition that exchanges two elements of  $\mathrm{Supp}(\alpha)$ must lie in  $W_{\Theta}$, so $\alpha$ must be a product of those transpositions mentioned in the Lemma,
\item[$\bullet$] and we must have $\gamma_\ell = 1$ for all $\ell \in \mathrm{Supp}(\alpha)$, because the orbit $\Omega$ is mixed. 
\end{itemize}
We see that in that case, we have $\alpha \mathbf{c}_{\alpha} = \alpha$, and that this element is a product of transpositions which all lie in $W_\Theta$. So it satisfies the conclusion of the Lemma.
\item[3.2.] If the support of $\alpha$ has nonempty intersection with both  $\Omega^{\mathrm{per}}$ and $\Omega^{\mathrm{flip}}$, then because $\alpha$ is a cycle, there must exist an element {$i \in \mathrm{Supp}(\alpha)$} with the property that  $(i {\alpha}(i))\mathbf{c}_i\mathbf{c}_{ {\alpha}(i)}$ lies in $W_{\Theta}$. 

We see, then that $W_{\Theta}$ contains the element  
\[ x =\left[(i\alpha(i))\mathbf{c}_i\mathbf{c}_{\alpha(i)} \right] \cdot\left[ \alpha \mathbf{c}_{\alpha}\right].\]
If we prove that $x$ must itself be a product of elements of the kind mentioned in the Lemma, then our task will be completed. Now, the projection of $x$ in the subgroup $\mathfrak{S}$ is a permutation with support is contained in that of $ {\alpha}$, but that admits $i$ as a fixed point. That permutation is, therefore, a product of disjoint cycles with length smaller than that of the cycle $\alpha$, and those cycles are all supported in $\Omega$. 

The desired conclusion then follows by induction on the maximal length of cycles in the projection of $x$. 
 \end{enumerate} 
 \end{enumerate}
\end{proof} 
 \subsubsection{Existence of a fixed point} \label{verif_pt_fixe}
 
We now come back to the representation
\[ \sigma = \sigma_1 \otimes \dots \otimes \sigma_r \otimes \tau\] 
and take up the above partition of  $\{1, \dots, r\}$ into orbits $\Omega$. We shall attach to each orbit $\Omega$ a unitary unramified character of the group $\prod \limits_{i \in \Omega} \mathrm{GL}(n_i,F)$, then twist $\sigma$ by those characters to obtain a fixed point for the action of $W_\Theta$, {thereby proving Theorem \ref{th_pt_fixe} for symplectic groups}. 

Fix an orbit $\Omega$, choose  $i_{\Omega}$ in $\Omega$, and observe the type of $\Omega$:

\begin{enumerate}
\item If $\Omega$ is a pure orbit of type I, then for all $i \in \Omega$, there exists $\chi_i \in \mathcal{X}_u(\mathrm{GL}(n_i,F))$ such that $\sigma_i \simeq \sigma_{i_{\Omega}} \otimes \chi_i$. 

Furthermore, the fact that $\Omega$ is type-I means that $\mathbf{c}_{i_\Omega} \in W_{\Theta}$, so there exists a character $\lambda_{i_\Omega} \in \mathcal{X}_u(\mathrm{GL}(n_i,F))$ such that
 \begin{equation} \label{un} \widecheck{\sigma_{i_\Omega}} \simeq \sigma_{i\Omega} \otimes \lambda_{i_\Omega}.\end{equation}
Calling in Lemma \ref{contrag}, we know that there is a unitary unramified character  $\nu_{{\Omega}}$ such that 
\begin{equation} \label{deux} \widecheck{(\sigma_{i_\Omega} \otimes \nu_{\Omega})}\simeq \sigma_{i_\Omega} \otimes \nu_{\Omega}.\end{equation}
 Combining \eqref{un} and \eqref{deux}, a simple calculation shows that 
 \begin{equation} \label{trois} \forall i \in \Omega, \quad  \widecheck{\left[\sigma_{i} \otimes (\nu_{\Omega} \chi_{i}^{-1})\right]}\simeq \sigma_{i} \otimes (\nu_{\Omega} \chi_i^{-1}).\end{equation}
We then define 
\[ \chi_{\Omega} = \bigotimes \limits_{i \in \Omega} (\nu_\Omega \chi_i^{-1}).\]
\item If $\Omega$ is a pure orbit of type II, then for every $i \in \Omega$,  there exists $\chi_i \in \mathcal{X}_u(\mathrm{GL}(n_i))$ such that  $\sigma_i \simeq \sigma_{i_{\Omega}} \otimes \chi_i$. We define $\chi_\Omega = \bigotimes \limits_{i \in \Omega} \chi_i^{-1}$.

\item Finally assume that  $\Omega$ is a mixed orbit. Decompose $\Omega$ as $\Omega^{\mathrm{per}} \cup \Omega^{\mathrm{flip}}$, as in \S \ref{w_theta}. For every $i \in \Omega^{\mathrm{per}}$, we have $\sigma_i \simeq \sigma_{i_\Omega} \otimes \chi_i$, where $\chi_i \in \mathcal{X}_u(\mathrm{GL}(n_i,F))$; on the other hand, for every $i\in \Omega^{\mathrm{flip}}$, we have $\sigma_i \simeq \widecheck{\sigma_{i_\Omega}} \otimes \chi_i$, where $\chi_i \in \mathcal{X}_u(\mathrm{GL}(n_i))$. In that case, we set $\chi_\Omega = \bigotimes \limits_{i \in \Omega} \chi_i^{-1}.$

\end{enumerate}

Now that a character $\chi_\Omega$ has been defined for every orbit $\Omega \subset \{1, \dots, r\}$, we set
\[ \chi_{\mathrm{tot}} = \bigotimes \limits_{\Omega} \chi_\Omega\]
and define
\[ \sigma_{\mathrm{end}} = \sigma \otimes \chi_{\mathrm{tot}}.\]

  \begin{prop} \label{fixe_ortho} The action of $W_{\Theta}$ on $\mathcal{O}$ admits $\sigma_{\mathrm{end}}$ as a fixed point. \end{prop}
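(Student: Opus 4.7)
The plan is to apply Lemma \ref{det_wt}: since $W_\Theta$ is generated by the three families of elements listed there, it suffices to check that $\sigma_{\mathrm{end}}$ is fixed by each such generator. By the very definition of the relation $\sim$, any such generator involves only indices lying in a single orbit $\Omega \subset \{1,\dots,r\}$ of $\sim$, and since $\chi_{\mathrm{tot}}$ was built orbit by orbit, I can argue one orbit at a time. The key computational input is that a transposition $(ij)$ simply swaps the tensor factors at positions $i$ and $j$, while a sign change $\mathbf{c}_i$ replaces the $i$th factor $\pi_i$ by its off-diagonal contragredient $\widecheck{\pi_i}$.

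Fix an orbit $\Omega$ and write $\rho_i$ for the $i$th tensor factor of $\sigma_{\mathrm{end}}$, $i \in \Omega$. If $\Omega$ is a pure orbit of type II, the construction gives $\rho_i \simeq \sigma_{i_\Omega}$ for every $i \in \Omega$, so the only generators coming from $\Omega$ (transpositions within $\Omega$) permute isomorphic factors and fix $\sigma_{\mathrm{end}}$. If $\Omega$ is a pure orbit of type I, the construction gives $\rho_i \simeq \sigma_{i_\Omega} \otimes \nu_\Omega$ for every $i \in \Omega$; again transpositions permute isomorphic factors, and the self-duality relation \eqref{deux} shows that each $\mathbf{c}_i$, $i \in \Omega$, also fixes $\sigma_{\mathrm{end}}$.

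The remaining case is a mixed orbit $\Omega = \Omega^{\mathrm{per}} \cup \Omega^{\mathrm{flip}}$. Here one reads off $\rho_i \simeq \sigma_{i_\Omega}$ for $i \in \Omega^{\mathrm{per}}$ and $\rho_i \simeq \widecheck{\sigma_{i_\Omega}}$ for $i \in \Omega^{\mathrm{flip}}$. Transpositions of two indices of the same kind swap isomorphic factors, and hence fix $\sigma_{\mathrm{end}}$. For a generator $(ij)\mathbf{c}_i\mathbf{c}_j$ with, say, $i \in \Omega^{\mathrm{per}}$ and $j \in \Omega^{\mathrm{flip}}$, the product $\mathbf{c}_i\mathbf{c}_j$ turns the pair of factors $(\sigma_{i_\Omega}, \widecheck{\sigma_{i_\Omega}})$ at positions $(i,j)$ into $(\widecheck{\sigma_{i_\Omega}}, \sigma_{i_\Omega})$, and the transposition $(ij)$ then swaps them back to the original configuration. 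The potentially delicate point is Case 1, because it relies on the somewhat indirect Lemma \ref{contrag} used to produce $\nu_\Omega$ from an arbitrary $\chi$ realizing the duality $\widecheck{\sigma_{i_\Omega}} \simeq \sigma_{i_\Omega}\otimes \chi$; but once \eqref{trois} is available, that step is pure bookkeeping. Running through these three cases exhausts the generators of $W_\Theta$ and yields the proposition.
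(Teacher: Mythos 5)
Your proof is correct and follows essentially the same route as the paper's: both record, orbit by orbit, what the twisted factors $\sigma'_i = \sigma_i \otimes (\chi_{\mathrm{tot}})_i$ look like (all equal to $\sigma_{i_\Omega}\otimes\nu_\Omega$ and self-dual in the type-I case; all equal to $\sigma_{i_\Omega}$ in the type-II case; split into $\sigma_{i_\Omega}$ on $\Omega^{\mathrm{per}}$ and $\widecheck{\sigma_{i_\Omega}}$ on $\Omega^{\mathrm{flip}}$ in the mixed case), and then invoke Lemma \ref{det_wt} to reduce to checking that each generator fixes $\sigma_{\mathrm{end}}$. The only difference is that you spell out explicitly how transpositions and sign changes act on the tensor factors, whereas the paper leaves this last verification to the reader.
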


\begin{proof} We start with the decomposition
\[ \sigma_{\mathrm{end}} = \sigma'_1  \otimes \dots \sigma'_r \otimes \tau,\]
with notation that should be obvious at this stage. Given the construction of the character $\chi_{\mathrm{tot}}$, if we consider an orbit $\Omega \in \{1, \dots, r\}$ and fix an element  $i_\Omega$ on $\Omega$, then we can make the following remarks:
 \begin{itemize}
\item[$\bullet$] If $\Omega$  is pure of type I, then for every $i \in \Omega$, we have $\sigma'_i \simeq \sigma'_{i_\Omega}$, and also  $\widecheck{\sigma'_i} \simeq \sigma'_i$.
\item[$\bullet$] If $\Omega$ is pure of type II, then for every $i \in \Omega$, we have $\sigma'_i \simeq \sigma'_{i_\Omega}$, while $\widecheck{\sigma'_i}$ is not a twist of~$\sigma'_i$.
\item[$\bullet$] If $\Omega$ is mixed and if $i \in \Omega$, then $\sigma'_i \simeq \sigma'_{i_\Omega}$ when  $i \in \Omega^{\mathrm{per}}$, and $\sigma'_i \simeq  \widecheck{\sigma'_{i_\Omega}}$ when $i \in \Omega^{\mathrm{flip}}$.
\end{itemize}
Turning to the generators of  $W_{\Theta}$ described in \ref{det_wt}, we see that each of them acts trivially on  $\sigma_{\mathrm{end}}$.  The Proposition follows.\end{proof}

  \subsection{Odd orthogonal groups $\mathrm{SO}(2n+1,F)$} \label{so_2n+1}Consider the group $G=\mathrm{SO}(2n+1,F)$, where $n$ is a nonnegative integer. The Levi subgroups of $G$ and their Weyl groups have the same form as their counterparts in $\mathrm{Sp}(2n,F)$: if $M$ is a Levi subgroup of $G$, then
\[ M \simeq \mathrm{GL}(n_1,F) \times \dots \times \mathrm{GL}(n_r,F) \times M'\]
where $M'$ is either trivial or an orthogonal group $\mathrm{SO}(2q+1,F)$, and $n_1 + \dots + n_r + q = n$ (we still use the convention  $q=0$ when $M'$ is trivial). Furthermore, $M$ is conjugate with the block-diagonal subgroup
\begin{equation*}M_{\mathrm{std}} = \left\{ \mathrm{Diag}\left(g_1, \dots, g_r, \ \gamma , \ ^t\!(g_r)^{-1}, \dots, ^t\!(g_1)^{-1}\right), \quad  g_1 \in \mathrm{GL}(n_1,F), \dots,  g_r \in \mathrm{GL}(n_r,F), \gamma \in M'\right\}.\end{equation*}
where  $^t\!(\dots)$ denotes ordinary transposition.

Everything we said of symplectic groups can be said for orthogonal groups: we need only replace the off-diagonal transpose $\tau$ by the ordinary transpose in the argument. Therefore Theorem \ref{th_pt_fixe} also holds for odd orthogonal groups.

  \subsection{Split orthogonal groups $\mathrm{SO}(2n,F)$} \label{so_2n} Let $n$ be a positive integer. We now consider the split group $G=\mathrm{SO}(2n,F)$. The structure of Weyl groups is slightly subtler than that we met for odd orthogonal groups, and will make some slight changes necessary in our arguments. As far as Levi subgroups are concerned, the description perfectly matches the odd case: every Levi subgroup $M$ of $G$ satisfies
\[ M \simeq \mathrm{GL}(n_1,F) \times \dots \times \mathrm{GL}(n_r,F) \times M'\]
where $M'$ is either trivial or an orthogonal group $\mathrm{SO}(2q,F)$, with $n_1 + \dots + n_r + q = n$ (we still use the convention $q=0$ for trivial $M'$). As before, $M$ is conjugate with 
\begin{equation*} M_{\mathrm{std}} = \left\{ \mathrm{Diag}\left(g_1, \dots, g_r, \ \gamma , \ ^t\!(g_r)^{-1}, \dots, ^t\!(g_1)^{-1}\right), \quad  g_1 \in \mathrm{GL}(n_1,F), \dots,  g_r \in \mathrm{GL}(n_r,F), \gamma \in M'\right\},\end{equation*}
and we shall assume that $M=M_{\mathrm{std}}$. 

It is in the structure of the Weyl group $W(G,M)$ that a subtlety manifests itself. We need to distinguish two cases:
\begin{itemize}
\item[\emph{Case $1$}.]   If $M'$ is trivial, the Weyl group $W(G,M)$ is generated by 
\begin{enumerate}[(a)]
\item the  transpositions $(ij)$ such that $n_{i}=n_j$,
\item the sign changes $\mathbf{c}_i$ for which $n_i$ is even,
\item and the products $\mathbf{c}_i \mathbf{c}_j$, $i \neq j$, such that both $n_i$ and $n_j$ are odd.
\end{enumerate}
\item[\emph{Case $2.$}] When $M'$ reads $\mathrm{SO}(2q,F)$ with $q \geq 1$, we must bring in a new sign change $\mathbf{c}_{\mathrm{spec}}$. It operates on the component $\mathrm{SO}(2q,F)$, by conjugation with the matrix 
\begin{equation} \label{matrice_c} \mathbf{C} = {\begin{pmatrix} I_{q-1} & & & \\ & 0 & 1 & \\ & 1 & 0 & \\ & & & I_{q-1}\end{pmatrix}}.\end{equation}
(which lies in $\mathrm{O}(2q,F)$ but not in $\mathrm{SO}(2q,F)$). The group $W(G,M)$ is then generated by 
\begin{enumerate}[(a)]
\item the transpositions  $(ij)$ such that $n_{i}=n_j$,
\item the sign changes $\mathbf{c}_i$ for which $n_i$ is even, 
\item[(c')] and the products $\mathbf{c}_i \mathbf{c}_{\mathrm{spec}}$, for which $n_i$ is odd. 
\end{enumerate}
Note that the product of two elements of type (c') is an element of type (c) from case 1.
\end{itemize}
We can take up from the case of symplectic groups the decomposition  $W(G,M) = \mathfrak{S} \ltimes \mathfrak{C}$, where $\mathfrak{S}$ is isomorphic with the subgroup of $\mathfrak{S}_r$ generated by those transpositions $(ij)$ such that  $n_i = n_j$, and where $\mathfrak{C}$  is a $2$-group, which in case $1$ is generated  by the elements of type (b)-(c), and in case $2$ is generated by the elements of type (b)-(c'). 

Now let 
\[ \sigma = \sigma_1 \otimes \dots \otimes \sigma_r \otimes \tau\] 
be a discrete series representation of $M$, where  $\sigma_i \in \mathcal{E}_2(\mathrm{GL}(n_i,F))$ for all $i \in \{1, \dots, r\}$, and where $\tau \in \mathcal{E}_2(M')$. In order to determine the group $W_{\Theta}$ for the pair $\Theta=(M, \mathcal{X}_u(M)\cdot \sigma)_{G}$, only very slight adaptations are needed from the case of $\mathrm{SO}(2n+1,F)$. Recall that for all  $i \in \{1, \dots, r\}$, we write $\widecheck{\sigma_i}$ for the representation $g \mapsto \sigma(^t\!g^{-1})$ of $\mathrm{GL}(n_i,F)$. When $M'$ is nontrivial, we shall also denote by   $\widetilde{\tau}$ the representation  $g \mapsto \tau(\mathbf{C}g\mathbf{C})$, where $\mathbf{C}$ is the matrix \eqref{matrice_c}. 

By going through the arguments of \S \ref{w_theta}, we obtain the following description of $W_\Theta$.

\begin{lemm} \begin{enumerate}
\item When $M'$ is trivial, the group $W_\Theta$ is generated by
\begin{itemize}
\item[$\bullet$] the transpositions  $(ij)$ for which  $n_i=n_j$ and $\sigma_i$ is a twist of $\sigma_j$,
\item[$\bullet$]  the sign changes  $\mathbf{c}_i$ that have $n_i$ even and for which  $\widecheck{\sigma}_i$ is a twist of $\sigma_i$,
\item[$\bullet$]  the sign changes $\mathbf{c}_i\mathbf{c}_j$ for which $n_i$ and $n_j$ are odd, $\widecheck{\sigma}_i$ is a twist of $\sigma_i$, and $\widecheck{\sigma}_j$ is also a  twist of $\sigma_j$,
\item[$\bullet$] and the products $(ij)\mathbf{c}_i\mathbf{c}_j$ such that $\widecheck{\sigma}_i$ is a twist of $\sigma_j$, but neither of $\sigma_i$ nor of $\widecheck{\sigma}_j$.
\end{itemize}
\item When $M'=\mathrm{SO}(2q,F)$ ($q \geq 1$) and  $\widetilde{\tau}$ is not a twist of  $\tau$, the group $W_\Theta$ is generated by the same elements as in case 1 above. 

\item When $M'=\mathrm{SO}(2q,F)$ ($q \geq 1$) and $\widetilde{\tau}$ is a twist of $\tau$, the group $W_\Theta$ is generated by
\begin{itemize}
\item[$\bullet$] the transpositions  $(ij)$ for which  $n_i=n_j$ and $\sigma_i$ is a twist of $\sigma_j$,
\item[$\bullet$]  the sign changes $\mathbf{c}_i $ for which $n_i$ is even and $\widecheck{\sigma}_i$ is a twist of $\sigma_i$,
\item[$\bullet$]  the products $\mathbf{c}_i \mathbf{c}_{\mathrm{spec}}$ for which $n_i$ is odd and $\widecheck{\sigma}_i$ is a twist of $\sigma_i$,
\item[$\bullet$] and the products $(ij)\mathbf{c}_i\mathbf{c}_j$ such that $\widecheck{\sigma}_i$ is a twist of $\sigma_j$, but neither of $\sigma_i$ nor of $\widecheck{\sigma}_j$.
\end{itemize}

\end{enumerate}
 \end{lemm}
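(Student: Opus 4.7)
The plan is to follow the strategy of Lemma \ref{det_wt}, adapted to the structure of $W(G,M)$ for split orthogonal groups. Given $w\in W_\Theta$, I will decompose it as $w = \mathbf{s}\,\mathbf{c}\,\mathbf{c}_{\mathrm{spec}}^{\epsilon}$, where $\mathbf{s}\in\mathfrak{S}$ is a permutation, $\mathbf{c}$ lies in the subgroup of $\mathfrak{C}$ generated by the $\mathbf{c}_i$, and $\epsilon\in\{0,1\}$ (with $\epsilon=0$ in case~1). As in \S\ref{w_theta}, the condition $\mathbf{s}(\Omega)=\Omega$ for every orbit of the equivalence relation of \S\ref{orbites} lets me split $\mathbf{s}$ into cycles supported on single orbits and group the $\mathbf{c}_i$ contributions by orbit, obtaining commuting factors on which the analysis can be done separately.

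In case~1 ($M'$ trivial), the argument on each orbit follows Lemma \ref{det_wt} nearly verbatim, with one adjustment. For a type-I orbit $\Omega$ with $n_{i_\Omega}$ even, the individual sign changes $\mathbf{c}_i$, $i\in\Omega$, belong to $W(G,M)$ and contribute the second bullet; for a type-I orbit with $n_{i_\Omega}$ odd, sign changes lie in $W(G,M)$ only in pairs, and the admissible pairs are precisely those described by the third bullet (which allows pairing across distinct type-I orbits, as the parity constraint on $\mathfrak{C}$ concerns only the global count of odd-$n_i$ sign changes). The products $\mathbf{c}_i\mathbf{c}_j$ produced by the ``flip'' absorption trick in mixed orbits always occur alongside a transposition and so lie in $W(G,M)$ irrespective of the parities of $n_i$ and $n_j$, and the induction step of Lemma \ref{det_wt} goes through unmodified. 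For cases~2 and~3 ($M'=\mathrm{SO}(2q,F)$), the new ingredient is $\mathbf{c}_{\mathrm{spec}}$, which acts on the $\mathrm{SO}(2q,F)$ factor and sends $\tau$ to $\widetilde\tau$. Membership of $w$ in $W_\Theta$ forces $\widetilde\tau^\epsilon$ to be a twist of $\tau$: in case~2 this rules out $\epsilon=1$ and the problem reduces to case~1, while in case~3 both parities are possible, and the parity constraint on $\mathfrak{C}$ then forces $\mathbf{c}_{\mathrm{spec}}$ to be paired with an odd number of odd-$n_i$ sign changes, yielding the third bullet of case~3.

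The main obstacle is the same bookkeeping that made Lemma \ref{det_wt} delicate: one must check, by induction on cycle length as in step~3.2 of its proof, that in mixed orbits every factor $\alpha\mathbf{c}_\alpha$ reduces to a product of the listed generators \emph{without} violating the tighter parity constraints imposed by the structure of $\mathfrak{C}$ in the orthogonal case, and without requiring an isolated $\mathbf{c}_{\mathrm{spec}}$. What makes the induction close up cleanly is that the absorbing element $(i\,\alpha(i))\,\mathbf{c}_i\mathbf{c}_{\alpha(i)}$ uses a product of exactly two sign changes, which is always admissible regardless of the parities of $n_i$ and $n_{\alpha(i)}$ and independently of whether $\mathbf{c}_{\mathrm{spec}}$ occurs in $w$. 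Once this is secured, each of the three cases of the lemma follows by collecting the generators produced orbit by orbit.
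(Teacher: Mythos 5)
Your proposal is correct and follows the approach the paper intends (which it leaves to the reader with the phrase ``by going through the arguments of \S \ref{w_theta}''): decompose $w=\mathbf{s}\,\mathbf{c}\,\mathbf{c}_{\mathrm{spec}}^{\epsilon}$, factor over orbits, and adapt the induction of Lemma \ref{det_wt}. You correctly isolate the one genuinely new subtlety---that the global parity constraint on odd-$n_i$ sign changes and on $\mathbf{c}_{\mathrm{spec}}$ prevents a literal orbit-by-orbit factorization inside $W(G,M)$ over pure type-I odd orbits, forcing cross-orbit pairing---and resolve it by observing that mixed and type-II orbits automatically contribute an even sign count (so the residual odd count can be paired off, with $\mathbf{c}_{\mathrm{spec}}$ absorbing one factor in case~3).
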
 
In cases $1$ and $2$, the existence of a fixed point can be obtained by going through the same discussion we went through for symplectic groups, using the various kinds of  $\Omega \subset \{1, \dots, r\}$ for the equivalence relation $\sim$ of \S \ref{w_theta}. The only necessary adaptation, when discussing an orbit $\Omega$, is that one must distinguish between the parity of the common value for the $n_i$,  $i\in\Omega$. But the discussion is identical, so we omit the details.

In case $3$, we must insert the following observation.

\begin{lemm} \label{torsion} Let $\tau$ be a discrete series representation of $G=\mathrm{SO}(2q,F)$, $q \geq 1$. Let us write $\widetilde{\tau}$ for the representation $g \mapsto \tau(\mathbf{C}g\mathbf{C})$, where $\mathbf{C}$ is the matrix \eqref{matrice_c}. 

If there exists a unitary unramified character $\chi \in \mathcal{X}_u(G)$ such that $\widetilde{\tau} \simeq \tau \otimes \chi$, then there exists a character  $\nu \in \mathcal{X}_u(G)$ such that $\widecheck{\tau\otimes \nu} \simeq \tau \otimes \nu$.  \end{lemm}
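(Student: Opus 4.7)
The plan is to mirror the argument of Lemma \ref{contrag} after first establishing a direct comparison between the two involutions $\tau \mapsto \widecheck{\tau}$ and $\tau \mapsto \widetilde{\tau}$ on isomorphism classes of irreducible representations of $G=\mathrm{SO}(2q,F)$.

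First I would recall that for the split form of $\mathrm{SO}(2q)$ consistent with the block-antidiagonal description of Levi subgroups in \S \ref{so_2n}, the defining matrix $J$ is the antidiagonal matrix with $1$'s, and every $g \in \mathrm{SO}(2q,F)$ satisfies ${}^t\!g\, J\, g = J$, hence ${}^t\!g^{-1} = JgJ^{-1}$. Consequently $\widecheck{\tau}(g) = \tau(JgJ^{-1})$: the contragredient is implemented by conjugation by $J$. A quick determinant calculation gives $\det J = (-1)^q$ and $\det \mathbf{C} = -1$. When $q$ is even, $J$ lies in $\mathrm{SO}(2q,F)$, so conjugation by $J$ is inner and $\widecheck{\tau} \simeq \tau$ for every irreducible $\tau$; the conclusion then holds trivially with $\nu = 1$, without even using the hypothesis. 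When $q$ is odd, the product $J\mathbf{C}^{-1}$ has determinant $+1$ and therefore lies in $\mathrm{SO}(2q,F)$, so conjugation by $J$ and conjugation by $\mathbf{C}$ differ only by the inner automorphism given by $J\mathbf{C}^{-1}$; this yields the key equivalence $\widecheck{\tau} \simeq \widetilde{\tau}$.

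In the odd case, combining the last equivalence with the hypothesis gives $\widecheck{\tau} \simeq \tau \otimes \chi$. From here I would repeat verbatim the algebraic step of Lemma \ref{contrag}: for every $\nu \in \mathcal{X}_u(G)$, one has $\widecheck{\tau \otimes \nu} \simeq \widecheck{\tau} \otimes \widecheck{\nu} \simeq \tau \otimes \chi \otimes \nu^{-1}$, using the routine fact that $\widecheck{\nu} = \nu^{-1}$ for unitary unramified characters of $\mathrm{SO}(2q,F)$ (which can be verified directly by noting that ${}^t\!g^{-1} = JgJ^{-1}$ and that $\nu$ factors through the abelianization). The equivalence $\widecheck{\tau \otimes \nu} \simeq \tau \otimes \nu$ therefore reduces to the equality $\nu^2 = \chi$ of characters, and taking the square root $\nu = \chi^{1/2}$ in the divisible abelian group $\mathcal{X}_u(G)$ completes the proof. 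For $q \geq 2$, $G$ is semisimple so $\mathcal{X}_u(G)$ is in fact trivial and $\chi = 1$ from the outset; we nonetheless prefer to state the algebraic step in the form that parallels Lemma \ref{contrag}, as this covers the torus case $q=1$ uniformly.

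The substantive content of the proof is concentrated in the first step, namely the determinant computation that identifies $\widecheck{(\cdot)}$ with $\widetilde{(\cdot)}$ up to an inner automorphism of $\mathrm{SO}(2q,F)$ when $q$ is odd. Once that comparison is in hand, the character-adjustment argument is structurally identical to the $\mathrm{GL}(n,F)$ case of Lemma \ref{contrag}.
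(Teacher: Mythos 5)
The statement as printed appears to contain a misprint: the conclusion should read $\widetilde{\tau\otimes\nu}\simeq\tau\otimes\nu$ rather than $\widecheck{\tau\otimes\nu}\simeq\tau\otimes\nu$. This is clear both from the paper's own proof, which explicitly establishes $\widetilde{\tau\nu}=\tau\nu$ in the $q=1$ case, and from the way the lemma is invoked immediately afterwards in Case~3 of \S\ref{so_2n}, where the twist $\sigma'=\sigma\otimes\tilde\nu$ is precisely what is needed to make $\widetilde{\tau\nu}\simeq\tau\nu$ so that \S\ref{verif_pt_fixe} applies. You have taken the symbol $\widecheck{\ }$ at face value and proven that version; it is correct but not quite the statement that the argument needs.

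Setting that aside, the two proofs are genuinely different in structure. The paper's argument is short and avoids your geometric preliminary entirely: for $q\geq 2$ it simply observes that $Z(\mathrm{SO}(2q,F))=\{\pm I_{2q}\}$ is compact, hence $\mathcal{X}_u(G)$ is trivial, so $\chi=1$ and the conclusion holds with $\nu=1$ --- nothing more is needed; and for $q=1$, where $\mathrm{SO}(2,F)\simeq F^\times$ is abelian and $\tau$ is a unitary character, the paper writes $\tau=\widetilde{\widetilde\tau}=\widetilde\chi\,\chi\,\tau$ to deduce $\widetilde\chi=\chi^{-1}$ and then takes $\nu=\chi^{1/2}$, exactly as in Lemma~\ref{contrag}. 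Your route inserts the extra step of comparing the two involutions $\widecheck{\ }$ and $\widetilde{\ }$ via the determinant of the defining form $J$ and of $\mathbf C$, concluding that they differ by an inner automorphism when $q$ is odd. That comparison is not used by the paper and is not needed for the proof; it is also somewhat delicate, since the correct $J$ for the paper's split realization (the one compatible with the \emph{ordinary} transposes appearing in $M_{\mathrm{std}}$ of \S\ref{so_2n}) is not the antidiagonal matrix with $1$'s, and the parity of $\det J$ therefore requires more care than your one-line determinant calculation allows. Moreover, the $q$-even branch of your argument, which rests on $J\in\mathrm{SO}(2q,F)$ giving $\widecheck\tau\simeq\tau$ unconditionally, would not by itself prove the intended $\widetilde{\ }$ version for $q$ even; there one really does need the triviality of $\mathcal{X}_u(G)$, which you mention only in passing at the end. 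In short: your algebraic step for $q=1$ matches the paper's, but the detour through the comparison of $\widecheck{\ }$ and $\widetilde{\ }$ is both unnecessary (the $q\geq2$ case is killed at once by $\mathcal{X}_u(G)=\{1\}$) and convention-sensitive, and it is calibrated to the misprinted statement rather than the one actually used.
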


\begin{proof} If $G = \mathrm{SO}(2q,F)$ and if the integer $q$ satisfies $q\geq 2$, then the group $ \mathcal{X}_u(G)$ is trivial, showing that the Lemma is also trivial. Indeed, recall that if  $G^0$ is the common kernel for all unramified characters of $G$, then we have $G^0 = G$ whenever the center of $G$ is compact (see \cite[\S V.2.6, p. 163]{Renard}). For $G = \mathrm{SO}(2q,F)$ with $q \geq 2$, we have $Z(G) = \{-I_{2q}, I_{2q}\}$ (see \cite[\S II.6]{Dieudonne}), so $G^0 = G$ and $\mathcal{X}_u(G) = \{1\}$. 

We need only check, then, that the Lemma holds for $G=\mathrm{SO}(2,F)$. But that group is abelian, so $\tau$ is just a unitary character of $G$. Under the hypothesis of the Lemma, we have  $\tau = \widetilde{\widetilde{\tau}} = \widetilde{\tau \chi} = \widetilde{\chi} \chi \tau$, and since $\tau$ is a character, that can happen only if $\widetilde{\chi} \chi = 1$, that is, only if $\widetilde{\chi} = \chi^{-1}$. Setting  $\nu = \chi^{1/2}$, we then see $\widetilde{\tau \nu} = \widetilde{\tau} \chi^{-1/2} = \tau \chi \chi^{-1/2} = \tau \nu$, as announced.  \end{proof}

With Lemma \ref{torsion} in hand, we can complete our discussion of case $3$, where $\widetilde{\tau} \simeq \tau \otimes \chi$, $\chi \in \mathrm{SO}(2q,F)$. Consider the unitary unramified character $\tilde{\nu} = 1 \otimes \dots \otimes 1 \otimes \nu$ of $M = \mathrm{GL}(n_1,F) \times \dots \times \mathrm{GL}(n_r,F) \times \mathrm{SO}(2q,F)$, and twist  $\sigma$ by $\tilde{\nu}$, setting ${\sigma}' = \sigma \otimes \tilde{\nu}$ . If we apply to the element ${\sigma}'$ of the orbit $\mathcal{O}$ the very argument of \S \ref{verif_pt_fixe} , we obtain the existence in $\mathcal{O}$ of a fixed point for $W_{\Theta}$ (we emphasize that no adaptation of \S \ref{verif_pt_fixe}, in particular no discussion of parity, is  needed here).  This completes our proof that Proposition \ref{fixe_ortho} also holds for $G=\mathrm{SO}(2n,F)$.

  \subsection{Quasi-split orthogonal groups $\mathrm{SO}^\star(2n,F)$} For the quasi-split but non-split classical group $\mathrm{SO}^\star(2n,F)$, $n \geq 1$, the structure theory detailed in \S \ref{so_2n} extends verbatim, replacing split forms $\mathrm{SO}(2q,F)$ by quasi-split forms $\mathrm{SO}^\star(2q,F)$ everywhere: see \cite[\S 2]{GoldbergShahidi} and \cite[Appendix]{ChoiyGoldberg}. Therefore the results of \S \ref{so_2n} are also valid for that case.

  \subsection{Unitary groups $\mathrm{U}(n,n)$ and $U(n+1,n)$} \label{u_n}
 
 Let $E/F$ be a quadratic extension, and let $x \mapsto \bar{x}$ denote the nontrivial element in the Galois group $\mathrm{Gal}(E/F)$; we fix an element $\beta \in E$ such that $\overline{\beta}=-\beta$. We define two matrices in  $\mathrm{GL}(2n, E)$ by setting $J_n = \begin{pmatrix}  & \beta I_n \\ \beta I_n &  \end{pmatrix}$ and $J'_n = \begin{pmatrix} &  & \beta I_n \\ & 1 & \\  \beta I_n & & \end{pmatrix}$, and write $G$ for one of the two groups
  \[ U(n,n) = \left\{ g \in \mathrm{GL}(2n,E) \ : \ ^t\bar{g} J_n g = J_n \right\}, \quad \quad \quad  U(n+1,n) = \left\{ g \in \mathrm{GL}(2n,E) \ : \ ^t\bar{g} J_n' g = J_n' \right\}. \]
For every Levi subgroup $M$ of $G$, we have (see   \cite{GoldbergUN})
\[ M \simeq \mathrm{GL}(n_1,E) \times \dots \times \mathrm{GL}(n_r,E) \times M',\]
where $M'$ is either trivial or one of the two groups $U(q,q)$, $U(q+1,q)$ (here $M'$ must be of the same type as $G$), and $n_1 + \dots + n_r + q = n$. The group $M$ is conjugate with 
\begin{equation*}  M_{\mathrm{std}} = \left\{ \mathrm{Diag}\left(g_1, \dots, g_r, \ \gamma , \ \varepsilon(g_r) \dots, \varepsilon(g_1)\right),  \quad g_1 \in \mathrm{GL}(n_1,E), \dots,  g_r \in \mathrm{GL}(n_r,E), \gamma \in M'\right\},\end{equation*}
where $\varepsilon(\dots)$ denotes the involution $g \mapsto ^t{\bar{g}}^{-1}$ of $\mathrm{GL}(n,E)$.

Everything we said of symplectic groups then applies to that case: we need only replace the off-diagonal transpose $\tau$ with the involution $\varepsilon$ in the argument. {We have now completed our proof that Theorem \ref{th_pt_fixe} holds for all quasi-split classical groups. }

\section{Inclusions of $R$-groups, good and bad fixed points, and examples}\label{bons_pts_fixes}~

Let $G$ be one of the groups considered in \S \ref{exist_pt_fixe}; fix a Levi subgroup $M$ of $G$ and a representation $\sigma \in \mathcal{E}_2(M)$. By the results of \S \ref{exist_pt_fixe}, we may assume that $\sigma$ is a fixed point for the action of $W_\Theta$. The aim of this section is to determine whether the conditions in Assumption \ref{hyp_inclusions} can be satisfied for $\sigma$. Properties (a) and (b) in Assumption \ref{hyp_inclusions} turn out to be quite strong; however, we shall eventually give, in Theorem \ref{conclusion_inclu}, a necessary and sufficient condition on $\sigma$ for the conditions to be satisfied. 

\subsection{Example of a bad fixed point}\label{iwahori} We start by pointing out a simple example in which Assumption \ref{hyp_inclusions} cannot be satisfied, and indicate why there is little hope for a statement analogous to Theorem \ref{principal} in that case. 

Let $G$ be the group $\mathrm{Sp}(2n,F)$, $n \geq 2$, and let us consider the Iwahori-spherical block of $\widehat{G}_{\mathrm{temp}}$. This is the component built from the minimal Levi subgroup 
\[ M \simeq \mathrm{GL}(1,F)^n \]
where $M$ is the torus $\left\{ \mathrm{Diag}\left(a_1, \dots, a_n, a_n^{-1}, \dots, a_{1}^{-1}\right) \ : \ (a_1, \dots, a_n) \in (F^\times)^n\right\}$, and from the discrete series representation
\[ \sigma = \text{ the trivial character of $M$. }\]
The representation $\sigma$ is a fixed point for the action of the whole group $W(G,M) = \mathfrak{S}_n \ltimes (\mathbb{Z}/2\mathbb{Z})^n$. Keys proved \cite[Theorem $C_n$]{Keys} that the $R$-group  $R_\sigma$ is trivial (see also the discussion by Goldberg in \cite[p. 1140]{GoldbergSPN}). 

Now, let $\nu_\star$ be the unique unramified character of $\mathrm{GL}(1,F)=F^\times$ such that $(\nu_\star)^2 = 1$ and $\nu_\star \neq 1$. Define an unramified character $\chi \in \mathcal{X}_{u}(M)$ as $(\nu_\star) \otimes 1 \otimes \dots \otimes 1$. Then at the point $\sigma \otimes \chi$ of the unramified orbit of $\sigma$, we have $R_{\sigma \otimes \chi} = \mathbb{Z} / 2\mathbb{Z}$ (by \cite[Theorem $C_n$]{Keys}). So Assumption \ref{hyp_inclusions} definitely fails in that case.

We next point out that for every $\chi \in \mathcal{X}_u(M)$, the unramified twist $\sigma \otimes \chi$ cannot  at the same time be a fixed point for $W(G,M)$ and satisfy conditions (a) and (b) in Assumption \ref{hyp_inclusions}. Indeed, assume $\chi = \chi_1 \otimes \dots \otimes \chi_n$ has the property that $\sigma \otimes \chi$ is a fixed point for $W(G,M)$. The invariance under the permutation subgroup $\mathfrak{S}_n$ means that we must have $\chi_i = \chi_j$ for all $i$, $j$. The invariance under the sign changes means that the common value $\chi$ of all $\chi_i$ , must satisfy $\chi^2 = 1$. So we either have $\chi=1$ or $\chi  = \nu_\star$. In either case, by  \cite[Theorem $C_n$]{Keys}, the $R$-group $R_{\sigma \otimes \chi}$ is trivial. But the argument used above shows that by twisting with $(\nu_\star) \otimes 1 \otimes \dots \otimes 1$, we obtain a point with nontrivial $R$-group. 

{There is, therefore, no hope for a fixed point satisfying Assumption \ref{hyp_inclusions}. As we already discussed in the Introduction (see the discussion just before Assumption \ref{hyp_inclusions}), this leaves little hope that the  Iwahori-spherical block $C^\ast_r(G;\Theta)$ could satisfy a Morita-equivalence of the kind mentioned in \eqref{wass_reel}. Since the $2$-cocycle of $R_\sigma$ is always trivial for the component under discussion, we feel that there is little hope that an equivalence like that of  Theorem \ref{principal} can hold for the block $C^\ast_r(G;\Theta)$.}

\subsection{Description of good fixed points: symplectic groups} \label{bons} We return to the case where $G$ is one of the classical groups studied in \S \ref{exist_pt_fixe}, and $\widehat{G}_\Theta$ is an arbitrary connected component of $\widehat{G}_{\mathrm{temp}}$. Let $(M,\sigma)$ be an attached discrete pair in which $\sigma$ is a fixed point for $W_\Theta$. The aim of this section is to  describe necessary and sufficient conditions on $\sigma$ to satisfy the properties in Assumption \ref{hyp_inclusions}. 

\subsubsection{}  For the moment, let us assume that $G=\mathrm{Sp}(2n,F)$, and start from the general Levi subgroup 
\[ M \simeq \mathrm{GL}(n_1,F) \times \dots \times \mathrm{GL}(n_r,F) \times \mathrm{Sp}(2q,F)\]
of \eqref{desc_levi}, where $n_1, \dots, n_r,q$ satisfy  $n_1 + \dots + n_r + q = n$ (we do not exclude  $q=0$).
We fix a discrete series representation $\sigma$ of $M$, and decompose it as
\begin{equation} \label{sigma} \sigma = \sigma_1 \otimes \dots \otimes \sigma_r \otimes \tau\end{equation}
 where $\sigma_i \in \mathcal{E}_2(\mathrm{GL}(n_i,F))$ for $i \in \{1, \dots, r\}$, and where $\tau \in \mathcal{E}_2(\mathrm{Sp}(2q,F))$. 

Goldberg proved in \cite[\S 6]{GoldbergSPN} that in calculating the $R$-group for $\sigma$, a key role is played by special reducibility conditions attached to the various constituents $\sigma_i$. Let us establish some notation:

\begin{defi} Fix $i \in \{1, \dots, r\}$, and let $\rho$ be a discrete series representation of $\mathrm{GL}(n_i,F)$. We say that $\rho$ \emph{satisfies condition $\mathcal{C}_i(\tau)$} when:
\begin{equation} \label{ci} \tag{$\mathcal{C}_i(\tau)$} {\text{$\mathrm{Ind}_{M_iN_i}^{G_i}(\rho \otimes \tau)$ is reducible }}\end{equation}
where $G_i$ is the symplectic group $\mathrm{Sp}(2n_i + 2q,F)$,  {$M_i$ is the standard Levi subgroup isomorphic with $\mathrm{GL}(n_i,F) \times \mathrm{Sp}(2q,F)$,  $M_iN_i$ is any parabolic subgroup of $G_i$ with Levi factor $M_i$, and the induced representation $\mathrm{Ind}_{M_iN_i}^{G_i}(\rho \otimes \tau)$ is constructed in one of the equivalent ways described in \S \ref{induites}. }
\end{defi} 
\noindent(For the reader's convenience, we mention that $\mathcal{C}_i(\tau)$ for $\rho$ is denoted $\mathcal{X}_{n_i, q}(\rho \otimes \tau)$ in \cite{GoldbergSPN}).

\begin{rema} \label{dualite} ~
\begin{itemize}
\item[$\bullet$]  If a representation $\rho$ satisfies condition \ref{ci}, then we must have $\widecheck{\rho}=\rho$ (see \cite[p. 1140]{GoldbergSPN}). 
\item[$\bullet$]  In some special cases, it is also sufficient that $\rho$ satisfy $\widecheck{\rho}=\rho$. This happens when $q=0$ and $n_i=1$, as recalled in \S \ref{iwahori}. Shahidi \cite{ShahidiDuke} also proved that $\widecheck{\rho}=\rho$ implies \ref{ci} for $\rho$ when $q=0$, $n_i$ is odd and $\rho$ is supercuspidal unitary (see also \cite[\S 7]{GoldbergSPN}). 
\item[$\bullet$]  In general, however, condition \ref{ci} appears to be of an arithmetical nature: when $q$ and $n_i$ are arbitrary, but $\rho$ is supercuspidal,  Goldberg and Shahidi give in \cite{GoldbergShahidi} a precise criterion (using  the residue of the standard intertwining operators) for \ref{ci} to be satisfied.
\end{itemize} \end{rema}

\subsubsection{} We now start from the discrete series representation $\sigma$ of \eqref{sigma}, assume that $\sigma$ is a fixed point for the action of $W_\Theta$, and embark on our search for conditions so that $\sigma$ satisfies Assumption \ref{hyp_inclusions}. We will start by spelling out what Goldberg's determination of $R$-groups tells us for $R_\sigma$ under the present fixed-point hypothesis.

We take up, from \S \ref{orbites}, the partition of $\{1, \dots, r\}$ into orbits $\Omega$. When $\sigma$ is a fixed point, we can extract from \S \ref{w_theta} the following observations:

\begin{enumerate}
\item If $\Omega$ is a pure orbit of type I, then 
\begin{itemize} 
\item for all $i$, $j$ in $\Omega$, we have $\sigma_i \simeq \sigma_j$, 
\item and we also have $\widecheck{\sigma_i} \simeq \sigma_i$ for all $i \in \Omega$.
\end{itemize}
\item If $\Omega$ is a pure orbit of type II, then
\begin{itemize} 
\item for all $i$, $j$ in $\Omega$, we have $\sigma_i \simeq \sigma_j$, 
\item but we have  $\widecheck{\sigma_i} \not\simeq \sigma_i$ for all $i \in \Omega$.
\end{itemize} 
\item Finally, if $\Omega$ is a mixed orbit, 
\begin{itemize} 
\item we have  $\widecheck{\sigma_i} \not\simeq \sigma_i$ for all $i \in \Omega$, 
\item and for all $i,j$ in $\Omega$, we   have either $\sigma_i \simeq \sigma_j$ or $\sigma_i \simeq \widecheck{\sigma_j}$.
\end{itemize}
\end{enumerate} 

We now call in  the fact, due to Goldberg, that the $R$-group $R_\sigma$ is generated by pure sign changes. Recall from \S \ref{levi} that $W_\sigma$ is a subgroup of the semi-direct product $W(G,M) = \mathfrak{S} \ltimes \mathfrak{C}$, where $\mathfrak{S}$ consists of permutations and $\mathfrak{C}$ is the group of sign changes.

\begin{lemm}[Goldberg \cite{GoldbergSPN}] \label{signes} Let $w=\mathbf{s}\mathbf{c}$ be an element of $W_\sigma$. If $w$ lies in the $R$-group $R_\sigma$, then $\mathbf{s}=1$. \end{lemm}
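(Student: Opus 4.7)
The plan is to derive Lemma \ref{signes} from Goldberg's explicit computation of the $R$-group $R_\sigma$ for symplectic groups in \cite[\S 6]{GoldbergSPN}, together with our own description of $W_\sigma$ in Lemma \ref{det_wt}. The structural reason underlying the statement is transparent: every generator of $W_\sigma$ having nontrivial permutation part is in fact realized as a reflection in a root of the $\sigma$-useful subsystem $\Delta'$, and so already lies in $W'_\sigma$. Combined with Goldberg's explicit description of a generating set of $R_\sigma$ consisting of pure sign changes, this will yield the claim.

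First I would identify enough reflections in $\Delta'$. Recall that $\Delta'$ consists of those roots $\alpha \in \Delta$ for which the Plancherel measure $\mu_\alpha$ vanishes at $\sigma$, which by Harish-Chandra's theory is equivalent to the reducibility of $\mathrm{Ind}^{M_\alpha}(\sigma)$ on the attached maximal Levi $M_\alpha$. For the type-$C$ root system at hand, the Bernstein-Zelevinsky reducibility criterion for $\mathrm{GL}$ gives: $e_i - e_j \in \Delta'$ iff $n_i = n_j$ and $\sigma_i \simeq \sigma_j$, and $e_i + e_j \in \Delta'$ iff $n_i = n_j$ and $\widecheck{\sigma_i} \simeq \sigma_j$. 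Comparing with Lemma \ref{det_wt}, each transposition $(ij) \in W_\sigma$ is the reflection $s_{e_i - e_j}$ and so lies in $W'_\sigma$, while each mixed-orbit generator $(ij)\mathbf{c}_i\mathbf{c}_j \in W_\sigma$ is nothing but the single reflection $s_{e_i + e_j}$, which also lies in $W'_\sigma$. Consequently the projection $\psi \colon W_\sigma \to \mathfrak{S}$ that extracts the permutation part satisfies $\psi(W'_\sigma) = \psi(W_\sigma)$: every permutation appearing in $W_\sigma$ is already realized inside $W'_\sigma$.

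For the final step, I would invoke Goldberg's explicit determination in \cite[\S 6]{GoldbergSPN}, which exhibits a set of generators for $R_\sigma$ consisting entirely of pure sign changes (one such generator for each equivalence class of self-dual constituents $\sigma_i$ satisfying condition \ref{ci}, taken to be a suitable product of the $\mathbf{c}_i$ over the class). Since $R_\sigma$ is then generated by pure sign changes, one has $R_\sigma \subset \mathfrak{C}$, which is exactly the assertion of the lemma. The surjection $\psi(W'_\sigma) = \psi(W_\sigma)$ established above gives a conceptual explanation of this phenomenon: every coset of $W'_\sigma$ in $W_\sigma$ already has a pure-sign-change representative, so Goldberg's normalization can single out such a representative as the element of $R_\sigma$. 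The principal technical ingredient we rely on is Goldberg's analysis of the Plancherel measure in the mixed-orbit case, where the interplay between sign changes and permutations is most subtle; once this is granted, the conclusion follows immediately from the explicit form of the generators.
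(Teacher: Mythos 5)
Your proof takes essentially the same route as the paper's, which simply cites Goldberg (Lemma~6.3 and Theorem~4.9 of \cite{GoldbergSPN}); you invoke the explicit generating set for $R_\sigma$ from Goldberg's \S 6 instead, and the logical content is the same: the statement rests on Goldberg's analysis and not on any new argument. Two details in the surrounding commentary deserve correction. First, your stated Harish-Chandra criterion is inverted: for $s_\alpha \in W_\sigma$, the vanishing $\mu_\alpha(\sigma)=0$ is equivalent to $\mathrm{Ind}^{M_\alpha}(\sigma)$ being \emph{irreducible}, not reducible (a non-scalar self-intertwiner arises from a pole of $J_{\bar P|P}$, i.e.\ from $\mu_\alpha > 0$). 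Your concrete identifications of $\Delta'$ are nevertheless correct, precisely because GL-induction of unitary discrete series is always irreducible, so $e_i \pm e_j$ is $\sigma$-useful exactly when $s_{e_i\pm e_j}\in W_\sigma$; but those identifications do not follow from the criterion as you wrote it — taken literally, your criterion would declare $\Delta'$ empty on these roots. Second, the remark that the permutation projection satisfies $\psi(W'_\sigma) = \psi(W_\sigma)$, while a genuine and useful observation, does not by itself force $R_\sigma \subset \ker\psi$: a complement to $W'_\sigma$ inside $W_\sigma$ having the same permutation image need not lie in $\mathfrak{C}$ (a Klein four-group $\{1,a\}\times\{1,b\}$ mapping diagonally onto $\mathbb{Z}/2\mathbb{Z}$ is a counterexample to the abstract implication). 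You correctly present this only as a conceptual explanation, and the citation to Goldberg is doing the real work, so there is no gap.
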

\begin{proof} Combine Lemma 6.3 and Theorem 4.9 in \cite{GoldbergSPN}. \end{proof}

Now, suppose $\mathbf{c} \in \mathfrak{C}$ is an element of $R_\sigma$. We can decompose $\mathbf{c}$ as $\prod \limits_{\Omega} \mathbf{c}_{\Omega}$, where an element $\mathbf{c}_\Omega$ is attached to each orbit $\Omega$ and is a product of sign changes with support on $\Omega$. We observe that if $\Omega$ is either a pure orbit of type II or a mixed orbit, then we must have $\mathbf{c}_\Omega=1$.

We then insert the following reformulation of one of Goldberg's main results, Theorem 6.4 in \cite{GoldbergSPN}, adding an observation taken from its proof. 
\begin{theo}[Goldberg \cite{GoldbergSPN}] \label{gold_r} 
\begin{enumerate}[(i)]
\item Let $\sigma$ be a fixed point for the action of $W_\Theta$ on $\mathcal{O}$. The $R$-group $R_\sigma$ reads $R_\sigma \simeq (\mathbb{Z}/2\mathbb{Z})^d$, where $d$ is the number of orbits $\Omega \in \{1, \dots, r\}$ with the property that  one (equivalently all) of the representations $\sigma_i$, $i \in \Omega$, satisfies condition~\ref{ci}. Furthermore, for each of those orbits $\Omega$, the group $R_\sigma$ contains the element $\mathbf{c}^{\mathrm{full}}_\Omega = \prod \limits_{i \in \Omega} \mathbf{c}_i$. 
\item  {Let $\chi=\chi_1 \otimes \dots \otimes \chi_r$ be an unramified unitary character of $M$. The $R$-group $R_{\sigma \otimes \chi}$ is generated by the elements of the form $\mathbf{c}_A^{\mathrm{full}}=\prod \limits_{i \in A} \mathbf{c}_i$, where $A$ is a subset of $\{1, \dots, r\}$ satisfying $\sigma_i \otimes \chi_i \simeq \sigma_i \otimes \chi_j$ for $i, j \in A$ and such that \ref{ci} is valid for one (equivalently all) of the representations $\sigma_i \otimes \chi_i$, $i \in A$. }
\end{enumerate}\end{theo}

We shall give a name to the particular orbits that have the property in {Theorem~\ref{gold_r}(i)}, calling them \emph{$R_\sigma$-relevant}. We note, from Remark~\ref{dualite} and Lemma~\ref{signes}, that an orbit $\Omega$ cannot be $R_\sigma$-relevant unless it is pure of type I.

\subsubsection{} \label{singlet} We can now point out a simple situation in which part of Assumption \ref{hyp_inclusions} is satisfied. Assume that $\sigma$ has the property that all $R_\sigma$-relevant orbits are singletons, and observe what happens to the $R$-group $R_\sigma$ when we twist by a unitary unramified character $\chi$ of $M$, \label{centre_compact} with\footnote{We note that a unramified unitary character of $M$ must be trivial on the block $M'=\mathrm{Sp}(2q,F)$, because that group has compact center, and therefore no nontrivial unramified character.} $\chi = \chi_1 \otimes \dots \otimes \chi_r \otimes 1$.  Attached to one of the orbits $\Omega \in \{i_\Omega\}$ of Theorem \ref{gold_r} is a character $\chi_{i_\Omega} \in \mathcal{X}_u(\mathrm{GL}(n_i,F))$. Then two things can happen: either $\sigma_{i_\Omega} \chi_{i\Omega}$ satisfies condition \ref{ci}, or it does not. If it does the contribution of the orbit $\Omega$ to the $R$-group does not change  {by Theorem \ref{gold_r}(ii)}, and is still a factor $\mathbb{Z}/2\mathbb{Z}$. If it does not, the contribution of $\Omega$ to the $R$-group becomes trivial,  {again by Theorem \ref{gold_r}(ii)}. 

In the special case where all $R_\sigma$-relevant orbits are singletons, we see that the $R$-group at $\sigma \otimes \chi$ is necessarily contained in the $R$-group at $\sigma$, thereby ensuring that condition (b) in Assumption \ref{hyp_inclusions} is satisfied. 

\subsubsection{} \label{superrel} However, we must note that an even stronger condition is necessary if $\sigma$ is also to satisfy condition (a) in Assumption \ref{hyp_inclusions}. Indeed, still assuming that the relevant orbits are singletons, assume that one of the $R_\sigma$-relevant orbits $\{i_\Omega\}$ has the property that there exists a nontrivial character $\chi_{i_\Omega} \in \mathcal{X}_u(\mathrm{GL}(n_{i_\Omega},F))$ such that $\sigma_{i_\Omega} \otimes \chi_{i_{\Omega}}$ does not satisfy condition \ref{ci}, but does satisfy $\widecheck{\sigma_{i_{\Omega}}\chi_{i_\Omega}} = {\sigma_{i_{\Omega}}\chi_{i_\Omega}}$.

Now let us twist $\sigma$ by the character $\chi$ of $M$ which acts by $\chi_{i_{\Omega}}$ on the block $\mathrm{GL}(n_{i_{\Omega}},F)$ of $M$, and trivially on the other blocks. The result is a representation that is again a fixed point for $W_\Theta$. As we noted above, the contribution of $\Omega$ to the $R$-group becomes trivial. But if we turn to the determination of the group $W'$, calling in the notion of ``relevant root of $(G,M)$'' from \S \ref{defs_rgroupe}, we see that in our situation, Goldberg's work \cite[Lemma 6.2.(3)]{GoldbergSPN} exhibits a root of $(G,M)$ which is $(\sigma \otimes \chi)$-relevant but \emph{not} $\sigma$-relevant. This proves that  $W'_{\sigma \otimes \chi}$ cannot be contained in $W'_{\sigma}$, thereby showing that condition (a) in Assumption \ref{hyp_inclusions} cannot be satisfied.

\subsubsection{} The discussion in \S \ref{superrel} leads us to introduce stronger notions of relevance, which we now spell out. 

Whenever $i \in \{1, \dots, r\}$, we write $\mathcal{T}(\sigma_i)$ for the finite group of unramified characters $\nu \in \mathcal{X}_u(\mathrm{GL}(n_i,F))$ such that $\widecheck{\sigma_i \otimes \nu} \simeq \sigma_i \otimes \nu$. Because $\sigma$ is a fixed point for $W_\Theta$, this group can be nontrivial only if we already have $\widecheck{\sigma_i} = \sigma_i$, and in that case $\mathcal{T}(\sigma_i)$ can equivalently be described as $\left\{ \nu \in \mathcal{X}_u(\mathrm{GL}(n_i,F)) \ : \ \sigma_i \otimes \nu^2 \simeq \sigma_i \right\}$.

\begin{defi} \label{relevant} Let $\sigma = \sigma_1 \otimes \dots \otimes \sigma_r \otimes \tau$ be a fixed point for the action of $W_{\Theta}$, and let $\Omega \subset \{1, \dots, r\}$ be an orbit for the relation $\sim$ of \S \ref{orbites}. Recall that  $\sigma_i \simeq \sigma_j$ for all $i,j \in \Omega$.
\begin{itemize}
\item[$\bullet$] We say that $\Omega$ is \emph{weakly $R_\sigma$-relevant} when, for  $i \in \Omega$, there is, among  the representations $\sigma_i \otimes \nu$, $\nu \in \mathcal{T}(\sigma_i)$, at least one that satisfies condition \ref{ci}.
\item[$\bullet$] We say that $\Omega$ is \emph{$R_\sigma$-super-relevant} when, for $i \in \Omega$, the representations $\sigma_i \otimes \nu$, $\nu \in \mathcal{T}(\sigma_i)$, all do satisfy condition \ref{ci}.
\item[$\bullet$] We say that $\Omega$ is \emph{special} when $\Omega$ is $R_\sigma$-relevant and for $i \in \Omega$, we have $\sigma_i \otimes \nu \simeq \sigma_i$ for all $\nu \in \mathcal{T}(\sigma_i)$.
\end{itemize}
\end{defi}

As before, the fixed-point hypothesis means that an orbit $\Omega$ cannot be weakly $R_\sigma$-relevant unless it is pure of type I.

\begin{rema}[Weak relevance and super-relevance] The super-relevance condition may appear quite strong. Note, however, from Remark \ref{dualite}, that we have some amount of information on it:

\begin{itemize}
\item[$\bullet$]  If the factor $M'=\mathrm{Sp}(2q,F)$ of $M$ is trivial, and if $\Omega$ is such that for one (equivalently all) $i \in \Omega$, the integer $n_i$ is odd and the representation $\sigma_i$ is supercuspidal, then weak relevance implies super-relevance.
\item[$\bullet$] If $M', \Omega$ and $\sigma_i$ are as above, but  $n_i$ is even, Shahidi's work shows that the super-relevance condition depends on the theory of twisted endoscopy. See the discussion in \cite[\S 3]{ShahidiDuke}.
\item[$\bullet$] If $M'$ is nontrivial,  super-relevance should depend on the behavior of residues as calculated in  Goldberg and Shahidi \cite{GoldbergShahidi}.
\end{itemize}

The above remarks certainly show that there can indeed exist super-relevant orbits. We should also mention that weak relevance does \emph{not} in general imply super-relevance. Consider for instance the Levi $M \simeq \mathrm{GL}(n,F)$ of the Siegel parabolic subgroup in $\mathrm{Sp}(2n,F)$, and assume that $n$ is even. Then  Kutzko and Morris prove in Proposition 7.4 of \cite{MorrisKutzko}  that for any depth-zero unitary supercuspidal representation $\sigma$ of $\mathrm{GL}(n,F)$ such that  $\check\sigma\simeq\sigma$ and $\sigma$ has trivial central character, the representation $\mathrm{Ind}_P^G(\sigma\otimes\chi)$ is reducible if and only if $\chi$ is the nontrivial quadratic unramified character of $\mathrm{GL}(n,F)$.

In that case, there is only one orbit $\Omega=\{1\}$, and the above shows that it is weakly $\sigma$-relevant without being  itself $\sigma$-relevant. \end{rema}

Let us turn now to special orbits. The notion will be short-lived, as a result of our next observation.

\begin{lemm} \label{speciales} Let $\sigma = \sigma_1 \otimes \dots \otimes \sigma_r \otimes \tau$ be a fixed point for the action of $W_{\Theta}$. If $\Omega \subset \{1, \dots, r\}$ is an orbit for the relation $\sim$ of \S \ref{orbites}, then $\Omega$ cannot be special. \end{lemm}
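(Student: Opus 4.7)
The plan is to argue by contradiction: suppose some orbit $\Omega$ is special, fix $i \in \Omega$, and compare the two subgroups
\[ H_i = \{\nu \in \mathcal{X}_u(\mathrm{GL}(n_i,F)) : \sigma_i \otimes \nu \simeq \sigma_i\} \subset \mathcal{T}(\sigma_i) \]
of $\mathcal{X}_u(\mathrm{GL}(n_i,F))$. The ``special'' hypothesis on $\Omega$ amounts exactly to the equality $H_i = \mathcal{T}(\sigma_i)$, and I aim to show that this equality never holds: $\mathcal{T}(\sigma_i)$ is always strictly larger than $H_i$.

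The first step is a reformulation of $\mathcal{T}(\sigma_i)$. Since $\Omega$ is assumed to be $R_\sigma$-relevant, $\sigma_i$ satisfies condition \ref{ci}, and Remark \ref{dualite} then gives $\widecheck{\sigma_i} \simeq \sigma_i$. The alternative description of $\mathcal{T}(\sigma_i)$ recorded just before Definition \ref{relevant} applies, and reads
\[ \mathcal{T}(\sigma_i) = \{\nu \in \mathcal{X}_u(\mathrm{GL}(n_i,F)) : \nu^2 \in H_i\}. \]

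The crux of the argument is the next step, where one uses the explicit structure of the ambient group. Every unramified character of $\mathrm{GL}(n_i,F)$ factors through the determinant, so $\mathcal{X}_u(\mathrm{GL}(n_i,F))$ is isomorphic to the one-dimensional compact torus $\mathcal{X}_u(F^\times)\simeq S^1$. A comparison of central characters of $\sigma_i$ and $\sigma_i \otimes \nu$ on the center of $\mathrm{GL}(n_i,F)$ shows that any $\nu \in H_i$ must satisfy $\nu^{n_i} = 1$, so $H_i$ is a finite, hence cyclic, subgroup of $S^1$, of the form $\mu_m$ for some $m \geq 1$. The squaring map on $S^1$ is surjective with kernel of order two, so the preimage of $H_i$ is $\mu_{2m}$, which strictly contains $\mu_m$. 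Hence $\mathcal{T}(\sigma_i) \supsetneq H_i$, contradicting the ``special'' hypothesis and proving the lemma. I do not expect any genuine obstacle beyond this elementary observation about the squaring map on $S^1$; the only point requiring a moment's care is the finiteness of $H_i$, which is handled by the central character argument sketched above.
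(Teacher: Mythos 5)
Your proof is correct and reaches the same conclusion as the paper's, but by a genuinely lighter route. Both arguments reduce the ``special'' hypothesis to the equality $\mathcal{Q}(\sigma_i)=\mathcal{T}(\sigma_i)$ (your $H_i=\mathcal{T}(\sigma_i)$), and both then exhibit a character in $\mathcal{T}(\sigma_i)\setminus\mathcal{Q}(\sigma_i)$. Where you diverge is in how you control $\mathcal{Q}(\sigma_i)$: the paper appeals to Bushnell--Kutzko's theory of simple types (\cite[Lemma 6.2.5]{BKLivre}) to pin $\mathcal{Q}(\sigma_i)$ down exactly as the group of unramified characters whose order divides $m_{\sigma_i}=n_i/e(E|F)$, and then produces an order-$2m_{\sigma_i}$ character directly. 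You instead observe, via the central character of $\sigma_i$, that any $\nu\in\mathcal{Q}(\sigma_i)$ satisfies $\nu^{n_i}=1$, so $\mathcal{Q}(\sigma_i)$ is a finite (hence cyclic) subgroup $\mu_m$ of $\mathcal{X}_u(\mathrm{GL}(n_i,F))\simeq S^1$; the elementary fact that the preimage of $\mu_m$ under squaring on $S^1$ is $\mu_{2m}\supsetneq\mu_m$ then finishes it. Your route avoids the Bushnell--Kutzko input entirely, at the cost of not identifying $m$ explicitly --- which is irrelevant for this lemma. Both are valid; yours is arguably more self-contained. One small stylistic remark: when you write that the special hypothesis ``amounts exactly to'' $H_i=\mathcal{T}(\sigma_i)$, this is really one of the two defining conditions (the other being $R_\sigma$-relevance), but since you are arguing by contradiction this costs nothing.
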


\begin{proof} Let $\Omega$ be an orbit. Since $\Omega$ can be special only if we have $\widecheck{\sigma_i} = \sigma_i$ for all $i \in \Omega$, we may as well assume this to be the case. Recall that $\sigma$ is a fixed point for $W_\Theta$, and therefore that all $\sigma_i$, $i \in \Omega$, are equivalent. Consider the finite group 
\[ \mathcal{T}(\sigma_i) = \left\{ \nu \in \mathcal{X}_u(\mathrm{GL}(n_i,F)) \ : \ \sigma_i \otimes \nu^2 \simeq \sigma_i \right\}\]
mentioned just before Definition \ref{relevant}. What we need to do is compare $\mathcal{T}(\sigma_i)$ with
\[ \mathcal{Q}(\sigma_i) = \left\{ \nu \in \mathcal{X}_u(\mathrm{GL}(n_i,F)) \ : \ \sigma_i \otimes \nu \simeq \sigma_i \right\}.\]
Of course $\mathcal{Q}(\sigma_i)$ is contained in $\mathcal{T}(\sigma_i)$. What we must show is that the two groups cannot be equal. To describe both groups, we now call in an observation by Bushnell and Kutzko \cite{BKLivre}. To the representation $\sigma_i$ of $\mathrm{GL}(n_i,F)$, Bushnell and Kutzko attach a simple type $(J,\lambda)$ (where $J$ is a compact open subgroup of $\mathrm{GL}(n_i,F)$ and $\lambda$ is an irreducible representation of $J$; see \cite[\S 6]{BKLivre}). In turn, $J$ and $\lambda$ determine an extension $E$ of $F$; we let $e(E|F)$ denote the ramification index of the extension $E$. This is a divisor of $n_i$. Bushnell and Kutzko proceed to show that a character $\nu \in \mathcal{X}_u(\mathrm{GL}(n_i,F))$ satisfies $\sigma_i \otimes \nu \simeq \sigma_i$ if and only if its order is finite and divides $m_{\sigma_i}=n_i/e(E|F)$ (see \cite[Lemma 6.2.5]{BKLivre}).

We see, thus, that $\mathcal{Q}(\sigma_i)$ is the group of characters $\nu$ whose order divides $m_{\sigma_i}$. But the Bushnell-Kutzko description above, together with the fact that $\nu$ lies in $\mathcal{T}(\sigma_i)$ if and only if $\nu^2$ lies in $\mathcal{Q}(\sigma_i)$, exhibits $\mathcal{T}(\sigma_i)$ as the group of  characters $\nu$ whose order divides $2m_{\sigma_i}$. 

Now, the group $G=\mathrm{GL}(n_i,F)$ admits unitary unramified  characters of any given order. In fact, given an integer $k>0$ and a uniformizer $\varpi_F$ for $F$, there is exactly one unitary unramified character $\chi$ of $G$ which sends the matrix $\mathrm{Diag}(\varpi_F, 1, \dots, 1)$ to $e^{\frac{2i\pi}{k}}$, and that character has order~$k$. Therefore, there does exist a character of order $k=2m_{\sigma_i}$. That one will lie in $\mathcal{T}(\sigma_i)$, but not in $\mathcal{Q}(\sigma_i)$. The finite groups $\mathcal{Q}(\sigma_i)$ and $\mathcal{T}(\sigma_i)$ cannot, then, be equal, showing that the orbit $\Omega$ of $i$ cannot be special. 
 \end{proof}

We are now ready to state a necessary and sufficient condition for a fixed point  $\sigma \in \mathcal{E}_2(M)$  to satisfy Assumption \ref{hyp_inclusions}.

\begin{defi} \label{good} Let $\sigma$ be a fixed point for the action of  $W_\Theta$ on $\mathcal{O}$. We say that $\sigma$ is a \emph{good fixed point} when an orbit $\Omega \in \{1, \dots, r\}$ can be weakly $R_\sigma$-relevant only if it is a singleton and is in addition $R_\sigma$-super-relevant. {When $\sigma$ is a fixed point and is not good, we say that $\sigma$ is a \emph{bad fixed point}.}
\end{defi}

Note that if $\sigma$ is a good fixed point, the only orbits that can contribute to the $R$-group are super-relevant singletons. Our result is then:

\begin{theo} \label{conclusion_inclu}  Let $G$ be the group $\mathrm{Sp}(2n,F)$, $n \in \mathbb{N}^\star$, let $\sigma$ be a discrete series representation of a Levi subgroup $M$ of $G$. Assume that $\sigma$ is a fixed point for the action of  $W_\Theta$ on $\mathcal{O}$. 
\begin{itemize}
\item[$\bullet$] If $\sigma$ is a good fixed point, then conditions (a) and (b) in Assumption \ref{hyp_inclusions} are satisfied. In fact, we have the following stronger assertion: for each $\chi \in \mathcal{X}_u(M)$, 
\begin{enumerate}[(a)]
\item we have $W'_{\sigma \otimes \chi} \subset W'_\sigma$,
\item[(b')] and the group $R_{\sigma \otimes \chi}$ is contained in $R_{\sigma}$.
\end{enumerate}
\item[$\bullet$] If $\sigma$ is a bad fixed point, then there exists a character $\chi \in \mathcal{X}_u(M)$ that satisfies either $\mathrm{Card}(R_{\sigma \otimes \chi}) > \mathrm{Card}(R_{\sigma})$ or $\mathrm{Card}(W'_{\sigma \otimes \chi}) > \mathrm{Card}(W'_{\sigma})$. Furthermore, if $\sigma$ is a bad fixed point, no unramified twist of $\sigma$ can be a good fixed point.
\end{itemize}
\end{theo}

\begin{proof} 
Let $\sigma = \sigma_1 \otimes \dots \otimes \sigma_r \otimes \tau$ be a good fixed point. Fix $\chi = \chi_1 \otimes \dots \otimes \chi_r$ in $\mathcal{X}_u(M)$. 

\begin{itemize}
\item[$\bullet$] We check (a) by comparing the groups $W'_{\sigma \otimes \chi}$ and $W'_\sigma$. Recall from \S \ref{defs_rgroupe} that these groups are generated by reflections attached to the roots of $(G,M)$ that are respectively $(\sigma \otimes \chi)$-relevant and $\sigma$-relevant. 

  Let $\Delta'_{\sigma \otimes \chi}$ and $\Delta'_{\sigma}$ be the corresponding sets of relevant roots. Then Goldberg has determined $\Delta'_{\sigma}$ {and $\Delta'_{\sigma\otimes \chi}$} explicitly, in terms of repetitions and repetitions-up-to-sign-change among the $\sigma_i$ {and the $\sigma_i \chi_i$}. We can rephrase the combination of \cite[Lemma 6.2]{GoldbergSPN} and \cite[Lemmas 4.8 and 4.17]{GoldbergSPN} as follows: 
{  \begin{itemize}
\item[$\star$] each pair $(i,j)$ such that $\sigma_{i}  \chi_i \simeq \sigma_j \chi_j$ contributes one root to $\Delta'_{\sigma \otimes \chi}$,
\item[$\star$] each pair $(i,j)$ such that $\sigma_{i} \chi_i \simeq \widecheck{\sigma_j \chi_j}$ contributes one root to $\Delta'_{\sigma \otimes \chi}$,
\item[$\star$] each integer $i$ such that $\widecheck{\sigma_i \chi_i} \simeq \sigma_i \chi_i $, but such that condition \ref{ci} is \emph{not} satisfied for $\sigma_i\chi_i$, contributes one root to $\Delta'_{\sigma \otimes \chi}$,
\item[$\star$] and all roots in $\Delta'_{\sigma \otimes \chi}$ come from one of the above contributions.
\end{itemize}
This determines $W'_{\sigma \otimes \chi}$; and by specializing to $\chi=1$, also determines $W'_{\sigma}$. }

We now point out that $\sigma$ is already a fixed point for the action of $W_{\Theta}$. Therefore, we cannot have $\sigma_i \chi_i \simeq \sigma_j \chi_j$ if we do not already have $\sigma_i \simeq \sigma_j$, and we cannot have $\sigma_i \chi_i \simeq \widecheck{\sigma_j \chi_j}$ if we do not already have $\sigma_i \simeq \widecheck{\sigma_j}$. 

We also know that if $i$ is an integer such that $\widecheck{\sigma_i \chi_i} \simeq \sigma_i \chi_i$, then by the fixed-point property we already have $\widecheck{\sigma_i} = \sigma_i$. Since we are assuming that $\sigma$ is a good fixed point, condition \ref{ci} cannot fail for $\sigma_i \chi_i$ unless it also fails for $\sigma_i$: if $i$ lies on a weakly $R_\sigma$-relevant orbit, by the good-fixed-point assumption it must lie on a $R_\sigma$-super-relevant orbit, so that condition  \ref{ci} for $\sigma_i \chi_i$ is equivalent with condition \ref{ci} for $\sigma_i$. 

We deduce that for every $i$ such that $\widecheck{\sigma_i \chi_i} \simeq \sigma_i \chi_i$, the integer $i$ cannot contribute a root to $\Delta'_{\sigma \otimes \chi}$ if it does not already contribute a root to $\Delta'_{\sigma}$. 

All this indicates that $\Delta'_{\sigma \otimes \chi}$ must be contained in $\Delta'_{\sigma}$, proving (a). 

  \item[$\bullet$] \label{verif_b} We now check (b), by determining $R_{\sigma \otimes \chi}$ and comparing it with $R_{\sigma}$. Recall from Theorem \ref{gold_r} that $R_{\sigma} \simeq (\mathbb{Z}/2\mathbb{Z})^d$, where $d$ is the number of $R_\sigma$-relevant orbits. Since we are assuming that $\sigma$ is a good fixed point, we know that every $\sigma$-relevant orbit $\Omega$ is a super-relevant singleton.

Let $\Omega = \{i_{\Omega}\}$ be such a singleton; then upon considering condition \ref{ci} for $\sigma_{i_{\Omega}} \chi_{i_{\Omega}}$, there are two possibilities. The first possibility is  $\chi_{i_{\Omega}} \notin \mathcal{T}(\sigma_{i_\Omega})$, and then $\sigma_{i_{\Omega}} \chi_{i_{\Omega}}$ cannot satisfy condition \ref{ci}, so that $\Omega$ does not contribute to the $R$-group at $\sigma \otimes \chi$. The second possibility is  $\chi_{i_{\Omega}} \in \mathcal{T}(\sigma_{i_\Omega})$, in which case $\sigma_{i_{\Omega}} \chi_{i_{\Omega}}$ must satisfy \ref{ci} because we requested that $\Omega$ be super-relevant; then $\Omega$ contributes a factor $\mathbb{Z}/2\mathbb{Z}$ to $R_{\sigma \otimes \chi}$.

Now, we note that only the $R_{\sigma}$-relevant orbits can contribute to $R_{\sigma \otimes \chi}$. Indeed, if $i$ is an element that does not lie on a $R_{\sigma}$-relevant orbit, and if the sign change $\mathbf{c}_i$ lies in the group $W_{\Theta}$,  then we have $\widecheck{\sigma_i} = \sigma_i$, but the irrelevance means that  condition \ref{ci} cannot be satisfied by $\sigma_i$; by the definition of good fixed points, we then see that it can be satisfied by none of the unramified twists of $\sigma$. Given the structure of $W_{\Theta}$ detailed in Lemma \ref{det_wt}, we deduce that no sign change in $W_{\Theta}$ can have a support with nonempty intersection with the union of $R_\sigma$-irrelevant orbits and at the same time lie in $R_{\sigma \otimes \chi}$. By Lemma \ref{signes}, this means that every element of $R_{\sigma \otimes \chi}$ must be a product of sign changes  with support in the union of $R_\sigma$-relevant orbits.

 We conclude that $R_{\sigma \otimes \chi} \simeq (\mathbb{Z}/2\mathbb{Z})^{d'}$, where $d'$ is the number of orbits $\Omega$ that are  a super-relevant singleton $\{i_\Omega\}$ satisfying $\chi_{i_{\Omega}} \in \mathcal{T}(\sigma_{i_\Omega})$.
  
 A consequence, of course, is that $d' \leq d$, proving part (b) of Assumption \ref{hyp_inclusions} and concluding our study of the case where $\sigma$ is a good fixed point. Another consequence is that all generators $\mathbf{c}_{i_{\Omega}}$ of $R_{\sigma \otimes \chi}$ already do lie in $R_\sigma$, whence the stronger assertion (b'). 
 
  \end{itemize}
Let us now turn to the case where $\sigma$ is a bad fixed point. What we must do is find a twist $\sigma \otimes \chi$ for which either the $R$-group gets larger, or the group $W'$ gets larger. 

Definition \ref{good} says that $\sigma$ is good if and only if all orbits are either (i) non-weakly-relevant, or (ii) at the same time super-relevant and a singleton. Since $\sigma$ is bad, there must exist an orbit $\Omega$ which is weakly $R_\sigma$-relevant,  and which is either not a singleton or a non-super-relevant singleton. 

Let $\Omega$ be such an orbit. We discuss cases separately:

\begin{itemize}
\item[\emph{Case 1:}] $\Omega$ is not a singleton. There are then three possibilities: 
\begin{itemize}
\item[\emph{Case 1(a):}] $\sigma_i$ satisfies \ref{ci}, but there exists $\nu \in \mathcal{T}(\sigma_i)$ such that $\sigma_i \otimes \nu$ does not satisfy \ref{ci}. 

We then twist $\sigma$ by the character $\chi \in \mathcal{X}_u(M)$ that operates by $\nu$ on each of the blocks $\mathrm{GL}(n_i,F)$, $i \in \Omega$, and trivially on the other blocks.  Under our hypothesis, the element  $\mathbf{c}_{\Omega}^{\mathrm{full}}$ of Theorem \ref{gold_r} lies in $R_{\sigma}$, but not in $R_{\sigma \otimes \chi}$. In fact, by \cite[Lemma 6.2]{GoldbergSPN}, we have $\mathbf{c}_{\Omega}^{\mathrm{full}} \in W'_{\sigma \otimes \chi}$. In addition, by an inspection of the $(\sigma \otimes \chi)$-relevant roots discussed in the starred items of our proof for the ``good fixed point'' case, we find out that the twist $\chi$ is special enough that we actually have $\Delta'_{\sigma} \subsetneq \Delta'_{\sigma \otimes \chi}$. This proves that $W'_{\sigma}$ is \emph{strictly contained} in $W'_{\sigma \otimes \chi}$, proving that condition (a) of Assumption \ref{hyp_inclusions} cannot hold.

\item[\emph{Case 1(b):}]  \label{cas1b} for all $\nu \in \mathcal{T}(\sigma_i)$, the representation $\sigma_i \otimes \nu$ satisfies \ref{ci}. 

We then use the fact that $\Omega$ cannot be special (Lemma \ref{speciales}), but must be relevant: so there must exist $\nu \neq 1$ in $\mathcal{T}(\sigma_i)$ such that $\sigma_i \nu \not\simeq \sigma_i$, and then both must satisfy \ref{ci}.

Now, $\Omega$ is not a singleton. We can therefore fix $i \neq j$ on $\Omega$, and twist $\sigma$ by the character $\chi$ that acts through $\nu$ on the component $\mathrm{GL}(n_j,F)$ of $M$, and trivially on the other components. Since the representations $\sigma_i$  and $\sigma_i \otimes \nu$ are not equivalent, but both satisfy \ref{ci} under our current hypotheses, we see from  \cite[Theorem 6.4]{GoldbergSPN} that the orbit $\Omega$  contributes a factor $(\mathbb{Z}/2\mathbb{Z})^2$ to the group $R_{\sigma \otimes \chi}$, but only a factor $(\mathbb{Z}/2\mathbb{Z})$ to the group $R_{\sigma}$. The contributions coming from other orbits do not change, and we conclude that  $\mathrm{Card}(R_{\sigma \otimes \chi}) > \mathrm{Card}(R_\sigma)$, forestalling the second part of Assumption \ref{hyp_inclusions}  in that case.

\item[\emph{Case 1(c):}] $\sigma_i$ does not satisfy \ref{ci}, but there exists $\nu \in \mathcal{T}(\sigma_i)$ such that $\sigma_i \otimes \nu$ satisfies~\ref{ci}.

Then we twist by the same character as in Case 1(a). Under our hypothesis, a discussion analogous to Case 1(a) reveals that the element $\mathbf{c}_{\Omega}^{\mathrm{full}}$ lies in $R_{\sigma \otimes \chi}$, but also lies in $W'_{\sigma}$. Furthermore, the twist $\chi$ does not affect the orbits distinct from $\Omega$, so that the generators of $R_{\sigma}$ attached to orbits distinct from $\Omega$ also lie in $R_{\sigma \otimes \chi}$. We see then that $\mathrm{Card}(R_{\sigma \otimes \chi}) > \mathrm{Card}(R_\sigma)$, proving that the second part of Assumption \ref{hyp_inclusions} cannot hold.

\end{itemize}
Since $\Omega$ must be weakly relevant, we have exhausted the possibilities where $\Omega$ is not a singleton. This concludes our inspection of Case 1. 
\item[\emph{Case 2:}] \label{cas2} $\Omega$ is a singleton, say $\Omega = \{i_\Omega\}$, and is weakly relevant without being super-relevant. We are left with two possibilities:

\begin{itemize}

\item[\emph{Case 2(a):}] $\sigma_{i_{\Omega}}$ satisfies \ref{ci}, but there exists $\nu \in \mathcal{T}(\sigma_{i_\Omega})$ such that $\sigma_{i_\Omega} \nu$ does not. Then a discussion analogous to Case 1(a) furnishes a character $\chi$ such that $\mathrm{Card}(W'_{\sigma \otimes \chi})>\mathrm{Card}(W'_{\sigma})$, and we have $\mathbf{c}_{i_{\Omega}} \in W'_{\sigma \otimes \chi}$ while $\mathbf{c}_{i_{\Omega}} \in R_{\sigma}$. 

\item[\emph{Case 2(b):}] $\sigma_{i_{\Omega}}$ does not satisfy \ref{ci}, but  there exists $\nu \in \mathcal{T}(\sigma_{i_\Omega})$ such that $\sigma_{i_\Omega} \nu$ does satisfy \ref{ci}. Then by taking up Case 1(b), we obtain a character $\chi$ such that $\mathrm{Card}(R_{\sigma \otimes \chi})>\mathrm{Card}(R_{\sigma})$, and    we have $\mathbf{c}_{i_{\Omega}} \in R_{\sigma \otimes \chi}$ while $\mathbf{c}_{i_{\Omega}} \in W'_{\sigma}$. 
\end{itemize}
\end{itemize}
The only part of Theorem \ref{conclusion_inclu} that remains to be proven is that no unramified twist of $\sigma$ can be a good fixed point if $\sigma$ is bad. Let $\chi = \chi_1 \otimes \dots \otimes \chi_r$ be an unramified character of $M$; if $\sigma \otimes \chi$ is a  fixed point for $W_{\Theta}$, then we can make the following observations: 
\begin{enumerate}[(i)]
\item if $\Omega$ is a pure orbit in $\{1, \dots, r\}$, we must have $\sigma_i \otimes \chi_i = \sigma_j \otimes \chi_j$ whenever $i$ and $j$ lie on $\Omega$,
\item if $\Omega$ is in addition type-1, we must have $\widecheck{\sigma_i  \chi_i} = \sigma_i  \chi_i$ for all $i \in \Omega$, so we must have $\chi_i \in \mathcal{T}(\sigma_i)$ for $i \in \Omega$. 

\end{enumerate}
 
Now, assume that $\sigma \otimes \chi$ is good. Any orbit that is weakly relevant for $\sigma$ must be weakly relevant for $\sigma \otimes \chi$ by (ii),  and therefore must be a strongly relevant singleton for $\sigma \otimes \chi$; but then condition  (ii) again implies that it is  also a strongly relevant singleton for $\sigma$, proving that $\sigma$ must have been a good fixed point in the first place.
\end{proof}

\subsection{Other groups, and the cases of $\mathrm{SO}(2n,F)$ and $\mathrm{SO}^\star(2n,F)$} The discussion in \S \ref{bons}, especially Definition \ref{good} and the analogue of Theorem \ref{conclusion_inclu}, is applicable to odd-orthogonal and unitary groups, the only difference being that the involutions used in sign changes must be switched to the ones in \S \ref{so_2n+1} and \S \ref{u_n}. (Note that odd-orthogonal and unitary groups do have compact center, securing the simplification recorded in the footnote on page \pageref{centre_compact}.) Our last task, therefore, is to indicate the way in which the discussion of \S \ref{bons} must be adapted when $G = \mathrm{SO}(2n,F)$ or $G =  \mathrm{SO}^\star(2n,F)$, $n \geq 1$.

It will turn out that Theorem \ref{conclusion_inclu} applies to $G$, exactly as stated, if we do use Definitions \ref{relevant} and \ref{good}. Let us, for the record, isolate that statement:

\begin{prop} \label{cas_so2N} When $G=\mathrm{SO}(2n,F)$ or $G=\mathrm{SO}^\star(2n,F)$, the analogue of Theorem \ref{conclusion_inclu} holds for $G$. \end{prop}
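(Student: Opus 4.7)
The plan is to parallel the proof of Theorem \ref{conclusion_inclu} line by line, adapting it to account for the structural differences of $\mathrm{SO}(2n,F)$ and $\mathrm{SO}^\star(2n,F)$ recorded in \S\ref{so_2n}. The only substantial differences with the symplectic case are that (i) sign changes $\mathbf{c}_i$ are constrained by the parity of $n_i$, (ii) when $M'$ is a nontrivial orthogonal factor the special sign change $\mathbf{c}_{\mathrm{spec}}$ enters the Weyl group through products $\mathbf{c}_i\mathbf{c}_{\mathrm{spec}}$, and (iii) the roles of the off-diagonal transpose and the ordinary transpose are swapped in the definition of $\widecheck{\sigma_i}$. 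Notice that Definition \ref{relevant} and Definition \ref{good} still make sense verbatim, and that Lemma \ref{speciales} is unchanged, because its proof depends only on the Bushnell--Kutzko description of $\mathcal{Q}(\sigma_i)$ and $\mathcal{T}(\sigma_i)$ inside $\mathcal{X}_u(\mathrm{GL}(n_i,F))$ and not on the ambient classical group $G$.

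First I would invoke Goldberg's determination of the $R$-group for even-orthogonal and quasi-split orthogonal classical groups: under the fixed-point assumption on $\sigma$, the group $R_\sigma$ is again an elementary abelian $2$-group, generated by pure sign-change elements attached to $R_\sigma$-relevant orbits, with the parity-and-$\mathbf{c}_{\mathrm{spec}}$ bookkeeping of \S \ref{so_2n} applied to the generators. The analogues of Goldberg's Lemmas 6.2 and 6.3 and of his Theorem 6.4, as used here to obtain Theorem \ref{gold_r}, also hold in the orthogonal setting; this is how Goldberg's reducibility criteria were used in \S \ref{exist_pt_fixe} to analyse $W_\Theta$. With these tools in hand, the two verifications for a good fixed point $\sigma$ proceed exactly as in \S \ref{bons}: the starred description of $(\sigma\otimes\chi)$-relevant roots, combined with the fact that a good fixed point forces \ref{ci} for $\sigma_i\chi_i$ to be equivalent to \ref{ci} for $\sigma_i$ whenever the orbit of $i$ is weakly $R_\sigma$-relevant, yields $\Delta'_{\sigma\otimes\chi}\subset \Delta'_\sigma$ (hence (a)), and the counting of $R_\sigma$-super-relevant singletons yields $R_{\sigma\otimes\chi}\subset R_\sigma$ (hence (b')).

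For the converse direction, I would redo Cases 1(a), 1(b), 1(c), 2(a), 2(b) of the symplectic proof. In each case the witness twist $\chi$ acts only on certain $\mathrm{GL}(n_i,F)$-components, and the distinguishing element whose inclusion we need to track ($\mathbf{c}_{i_\Omega}$ or $\mathbf{c}_\Omega^{\mathrm{full}}$) must be shown to lie in $W(G,M)$ in spite of the parity constraints. When $n_i$ is even the generator $\mathbf{c}_i$ itself is available; when $n_i$ is odd, one uses either $\mathbf{c}_i\mathbf{c}_j$ with another odd-dimensional index $j$ on the same orbit (always possible in Cases 1(a)-(c) since $\Omega$ is not a singleton) or the product $\mathbf{c}_i\mathbf{c}_{\mathrm{spec}}$ when $M'$ is nontrivial and $\widetilde\tau$ is a twist of $\tau$. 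The parity obstruction for singleton Cases 2(a)-(b) is handled by observing that a weakly relevant singleton must have $\widecheck{\sigma_{i_\Omega}}\simeq\sigma_{i_\Omega}$ and, by Goldberg, the corresponding sign change must already belong to $W(G,M)$ in any form that allows \ref{ci} to generate an $R$-group element at some twist; otherwise the orbit would not have been weakly relevant. The argument that no unramified twist of a bad $\sigma$ can be good is then identical to the one in the symplectic case, since conditions (i)-(ii) there only use the fixed-point property and the group $\mathcal{T}(\sigma_i)$.

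The main obstacle is the bookkeeping in the ``bad fixed point'' direction: one must check, case by case, that a witness character $\chi$ which worked in $\mathrm{Sp}(2n,F)$ still produces a genuine element of $W(G,M)\smallsetminus R_\sigma$ (or of $W'_{\sigma\otimes\chi}\smallsetminus W'_\sigma$) once the parity and $\mathbf{c}_{\mathrm{spec}}$ constraints of \S \ref{so_2n} are imposed. A minor amount of care is also needed when $M' = \mathrm{SO}^\star(2q,F)$ or $M' = \mathrm{SO}(2q,F)$ with $q\ge 1$, to ensure that the generators produced by the ``flip by $\widetilde\tau$'' do not spuriously enlarge the $R$-group after twisting; but as in Lemma \ref{torsion} the residual unramified character group of the orthogonal block is essentially trivial, so no new difficulty arises. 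This completes the adaptation.
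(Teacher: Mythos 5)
Your proposal captures the broad outline — distinguish the three cases according to $q$ and the relationship between $\widetilde\tau$ and $\tau$, observe that the quasi-split case is essentially identical via Choiy--Goldberg, and then try to parallel the symplectic proof — but it misses the central structural reduction on which the paper's argument actually rests.

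The paper does not adapt the symplectic proof case by case with parity bookkeeping. Instead it invokes Goldberg's factorisation (from \cite[\S 5]{GoldbergSPN}) of the problem into \emph{odd-sized blocks} and \emph{even-sized blocks}: one forms $G_{\mathrm{odd}}$, $M_{\mathrm{odd}}$, $\sigma_{\mathrm{odd}}$ and their even counterparts, and Goldberg's theorem gives direct product decompositions $W_\sigma = W(\sigma_{\mathrm{odd}}) \times W(\sigma_{\mathrm{even}})$, $W'_\sigma = W'_{\mathrm{odd}} \times W'_{\mathrm{even}}$ and $R_\sigma = R_{\mathrm{odd}} \times R_{\mathrm{even}}$, with the even part behaving exactly as in the symplectic case. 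This reduces everything to the case where all $n_i$ are odd and $q=0$ or $\widetilde\tau\not\simeq\tau$. In that residual case, Goldberg's results (recorded as Lemma \ref{cas_biz} in the paper) show that the structure is genuinely different from the symplectic one: condition \ref{ci} becomes \emph{equivalent} to self-duality $\widecheck{\sigma_i}=\sigma_i$, and $R_\sigma \simeq (\mathbb{Z}/2\mathbb{Z})^{m(\sigma)-1}$ is generated by \emph{double} sign changes $\mathbf{c}_i\mathbf{c}_j$, not single ones. Two consequences follow that you did not anticipate: first, weak relevance automatically implies super-relevance (your handling of ``the parity obstruction for singleton Cases 2(a)-(b)'' is therefore moot — singleton Case 2 simply cannot occur); second, of the non-singleton subcases of the symplectic proof, only Case 1(b) can present itself. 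Your plan to ``redo Cases 1(a), 1(b), 1(c), 2(a), 2(b)'' is therefore not a faithful adaptation: most of those cases are vacuous in the orthogonal setting, and the generators of $R_\sigma$ you would be tracking ($\mathbf{c}_{i_\Omega}$ or $\mathbf{c}_\Omega^{\mathrm{full}}$) are the wrong elements — the orthogonal $R$-group is built from double sign changes.

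A further small inaccuracy: for $\mathrm{SO}^\star(2n,F)$, the point about the anisotropic torus $\mathrm{SO}^\star(2,F)$ is not a ``minor care'' about flips spuriously enlarging the $R$-group; it is what allows the paper to treat Case 3 together with Case 1 (rather than Case 2, as for the split group), because the residual orthogonal block then has no nontrivial unramified character. Without the odd/even decomposition and Lemma \ref{cas_biz}, your proposal does not have the tools to actually carry through the verification, so the gap is a missing idea rather than a matter of bookkeeping.
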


The difference between these two groups and the others already discussed is that, for certain pairs $(M, \sigma)$, the interpretation of Definition \ref{good} and the proof of Theorem \ref{conclusion_inclu} must be adjusted, following the ideas and results in \cite[\S 5 and \S 6 after p. 1143]{GoldbergSPN}.

\subsubsection{Split  even-orthogonal groups} \label{so_split} We first consider $G= \mathrm{SO}(2n,F)$, and discuss the case of an arbitrary pair $(M, \sigma)$, where 
\[ M \simeq \mathrm{GL}(n_1,F) \times \dots \times \mathrm{GL}(n_r,F) \times \mathrm{SO}(2q,F)\]
(we recall that the case $q=0$, with the last factor trivial, is quite admissible), and
 \[ \sigma = \sigma_1 \otimes \dots \otimes \sigma_r \otimes \tau\] 
is a discrete series representation of $M$, where  $\sigma_i \in \mathcal{E}_2(\mathrm{GL}(n_i,F))$ for $i \in \{1, \dots, r\}$, and $\tau \in \mathcal{E}_2(\mathrm{SO}(2q,F))$. Recall that $\widetilde{\tau}$ denotes the representation $g \mapsto \tau(\mathbf{C}g\mathbf{C})$ of $\mathrm{SO}(2q,F)$, where $\mathbf{C}$ is the matrix in \eqref{matrice_c}. 

As before, we assume that $\sigma$ is a fixed point for the action of $W_\Theta$ on $\mathcal{O}$, and embark on a discussion of the structure of $R_\sigma$ and $W'_{\sigma}$. We must distinguish three cases: 
\begin{enumerate}
\item[\emph{Case 1 :}]  \emph{$q>1$, and the representation $\tau$ of $\mathrm{SO}(2q,F)$ satisfies $\widetilde{\tau} \simeq \tau$.}

In that case, Goldberg  proved in \cite[Theorems 5.8, 5.9, 5.19, 5.20, 6.5]{GoldbergSPN} that the computation of $R$-groups can be conducted exactly as in the symplectic case. All our arguments also apply to this case, and there is no new phenomenon to be reported. 

\item[\emph{Case 2 :}]    \emph{$q=0$, or $\widetilde{\tau} \not\simeq \tau$.}

\item[\emph{Case 3 :}]    \emph{$q=1$.}

\end{enumerate} 

For the third case, we note that the split group $\mathrm{SO}(2,F)$ is the group of $2 \times 2$ matrices of the form $\mathrm{Diag}(a,a^{-1})$, $a \in F^\times$. Therefore the standard subgroup isomorphic with $\mathrm{GL}(n_1,F) \times \dots \times \mathrm{GL}(n_r,F) \times \mathrm{SO}(2,F)$ is none other than the standard subgroup isomorphic with $\mathrm{GL}(n_1,F) \times \dots \times \mathrm{GL}(n_r,F) \times \mathrm{GL}(1,F)$. As a result, Case 3 can be viewed as an instance of Case 2. 

Only Case 2, therefore, remains. As we will see, the discussion that has to be conducted for Case 2 is significantly simpler than that we already conducted for Case 1. 

As a first step, let us recall that Goldberg proved that we can separately consider the blocks of odd size and the blocks of even size. Define
\begin{itemize}
\item[$\bullet$]  $N_{\textrm{odd}}$: the sum of all odd $n_i$,
\item[$\bullet$] $G_{\textrm{odd}}$: the group $\mathrm{SO}(2N_{\textrm{odd}} +2q,F)$, 
\item[$\bullet$]  $M_{\textrm{odd}} $: the standard Levi subgroup of $G_{\textrm{odd}} $ isomorphic with the product of all $\mathrm{GL}(n_i,F)$, $n_i$ odd, and $\mathrm{SO}(2q,F)$,
\item[$\bullet$]  $\sigma_{\textrm{odd}} \in \mathcal{E}_2(M_{\textrm{odd}} )$: the tensor product of all $\sigma_i$, $n_i$ odd, and of $\tau$. 
\end{itemize}

Attached to the discrete pair $(M_{\textrm{odd}}, \sigma_{\textrm{odd}})$ for $G_{\textrm{odd}}$ are a Weyl group $W(\sigma_ {\textrm{odd}})$, a system $\Delta'_{\sigma_{\mathrm{odd}}}$ of relevant roots, and groups $W'_{\mathrm{odd}}$ and $R_{\mathrm{odd}}$. 

Similarly, we can define $N_{\text{even}}$, $G_{\textrm{even}}$, $M_{\textrm{even}}$, $\sigma_{\textrm{even}}$, and then they come with associated data $W(\sigma_ {\textrm{even}})$, $\Delta'_{\sigma_{\mathrm{even}}}$, $W'_{\mathrm{even}}$ and $R_{\mathrm{even}}$.  Goldberg then proves in \cite[\S 5]{GoldbergSPN} that: 
 \begin{itemize}
 \item[$\bullet$]  we have the product decompositions 
 \begin{itemize} 
 \item $W_{\sigma} = W(\sigma_ {\textrm{odd}}) \times W(\sigma_{\textrm{even}})$, 
 \item $\Delta'_{\sigma} = \Delta'_{\mathrm{odd}} \cup \Delta'_{\mathrm{even}}$, and so $W'_{\sigma}= W'_{\mathrm{odd}}\times W'_{\mathrm{odd}}$,
 \item  $R_\sigma= R_{\mathrm{odd}}\times R_{\mathrm{even}}$; 
 \end{itemize}
 \item[$\bullet$]  furthermore, the groups {$W'_{\mathrm{even}}$ and $R_{\mathrm{even}}$} can be calculated exactly as in the symplectic case. 
 \end{itemize}
 
{Because of the direct products decompositions, we see that $\sigma$ is a good fixed point if and only if $\sigma_{\textrm{odd}}$ and $\sigma_{\textrm{even}}$ are both good fixed points when these notions are considered for $G_{\mathrm{odd}}$ and $G_{\mathrm{even}}$. Furthermore, the last bullet point above implies that we have already conducted the necessary discussion for $\sigma_{\textrm{even}}$, and we have already assumed that we are considering Case $2$, where $q=0$ or  $\widetilde{\tau} \not\simeq \tau$. Therefore, if we can prove Proposition \ref{cas_so2N}  when all $n_i$ are odd and $q=0$, and when all $n_i$ are odd and $\widetilde{\tau} \not\simeq \tau$, then we will have completed the task we set ourselves. }

As a last preliminary remark, let us recall three facts from Goldberg's work which will considerably simplify matters in the situation under discussion (see \cite[Theorems 6.8 and 6.11 and Lemmas 6.6 and 6.9]{GoldbergSPN}).

\begin{lemm}[\cite{GoldbergSPN}, \S 6] \label{cas_biz} Assume that all $n_i$ are odd, and that we have either $q=0$ or $\widetilde{\tau} \not\simeq \tau$. 
\begin{enumerate}[(1)]
\item For all $i \in \{1, \dots, r\}$, condition \ref{ci} is equivalent with $\widecheck{\sigma_i}=\sigma_i$.
\item Let $m(\sigma)$ be the number of inequivalent $\sigma_i$ such that $\widecheck{\sigma_i} \simeq \sigma_i$. Then 
\[ R_{\sigma} \simeq \begin{cases} 1 & \text{ if $m(\sigma)=0$, } \\  (\mathbb{Z}/2\mathbb{Z})^{m(\sigma)-1} & \text{ if $m(\sigma) \geq 1$.} \end{cases}\] In the second case, $R_{\sigma}$ is generated by a collection of double-sign-changes $\mathbf{c}_i\mathbf{c}_j$, indexed by $(m(\sigma)-1)$ pairs $\{i,j\}$ contained in the union of pure type-I orbits.
\item The group $W'_{\sigma}$ is generated by 

\begin{itemize}
\item[$\bullet$] the transpositions  $(ij)$ for which $n_i=n_j$ and $\sigma_i \simeq \sigma_j$,
\item[$\bullet$] and the products $(ij)\mathbf{c}_i\mathbf{c}_j$ such that $i$ and $j$ lie on a mixed orbit $\Omega = \Omega^{\mathrm{per}}\cup\Omega^{\mathrm{flip}}$ and one of $i,j$ lies in $\Omega^{\mathrm{flip}}$ and the other lies in $\Omega^{\mathrm{per}}$. \end{itemize} 

\end{enumerate}
(In the statement above, we need not assume that $\sigma$ is a fixed point for $W_{\Theta}$.)

 \end{lemm}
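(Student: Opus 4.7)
The plan is to deduce the three claims from the general $R$-group analysis conducted in \cite[\S 6]{GoldbergSPN}, specialized to the split even-orthogonal situation of Case~1 in \S\ref{so_2n}. The combinatorial feature to keep track of throughout is the structure of the sign-change subgroup $\mathfrak{C}\subset W(G,M)$: because all $n_i$ are odd, $\mathfrak{C}$ contains no individual sign change $\mathbf{c}_i$, only products of an \emph{even} number of such, and this parity constraint is precisely what produces the exponent $m(\sigma)-1$ in (2) instead of the $m(\sigma)$ one would get for symplectic groups.

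For (1), I would apply Shahidi's rank-one reducibility analysis for the induced representation $\mathrm{Ind}_{M_iN_i}^{G_i}(\sigma_i\otimes \tau)$ inside $G_i=\mathrm{SO}(2n_i+2q,F)$. The direction $\mathcal{C}_i(\tau)\Rightarrow \widecheck{\sigma_i}\simeq\sigma_i$ is the general fact recalled in Remark~\ref{dualite}. The converse is the substantive content: under the present hypotheses (odd $n_i$, and either $q=0$ or $\widetilde{\tau}\not\simeq\tau$) the potential endoscopic obstruction coming from the outer automorphism of $\mathrm{SO}(2q,F)$ is absent, and Shahidi's residue computation (equivalently, the content of \cite[Lemma 6.6]{GoldbergSPN}) reduces reducibility to the bare self-duality of $\sigma_i$.

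For (2), the starting point is Lemma~\ref{signes}: any element of $R_\sigma$ lies in $\mathfrak{C}$. Using (1) together with the fixed-point hypothesis, I would argue that such an element must be a product of full-orbit sign changes $\mathbf{c}_\Omega^{\mathrm{full}}=\prod_{i\in\Omega}\mathbf{c}_i$ over the pure type-I orbits $\Omega$ on which the $\sigma_i$ are self-dual (there are $m(\sigma)$ such orbits, say $\Omega_1,\dots,\Omega_{m(\sigma)}$). Each factor $\mathbf{c}_{\Omega_k}^{\mathrm{full}}$ involves $|\Omega_k|$ individual odd-$n_i$ sign changes, so the parity constraint on $\mathfrak{C}$ only permits an \emph{even} total count of individual sign changes; this yields the subgroup $(\mathbb{Z}/2\mathbb{Z})^{m(\sigma)-1}$ generated by the pairwise products $\mathbf{c}_{\Omega_k}^{\mathrm{full}}\mathbf{c}_{\Omega_\ell}^{\mathrm{full}}$. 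Modulo elements of $W'_\sigma$ (which rearrange indices within each orbit, as determined in (3)), each such product can be rewritten as a single double sign change $\mathbf{c}_i\mathbf{c}_j$ with $i\in\Omega_k$, $j\in\Omega_\ell$, giving the advertised presentation. Finally, (3) can be read off from \cite[Lemma 6.2]{GoldbergSPN}: under our hypotheses the $\sigma$-relevant roots are exactly those producing the transpositions $(ij)$ when $\sigma_i\simeq\sigma_j$, and the twisted transpositions $(ij)\mathbf{c}_i\mathbf{c}_j$ when $i,j$ lie on a mixed orbit on opposite sides of the $\Omega^{\mathrm{per}}/\Omega^{\mathrm{flip}}$ splitting. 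The main technical obstacle will be the parity bookkeeping in (2), where one must verify that exactly one dimension is lost compared with the analogous symplectic count and that no further subtle identifications survive modulo $W'_\sigma$.
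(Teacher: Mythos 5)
The paper does not prove this lemma at all: it is imported verbatim from Goldberg's work, and the paper's ``proof'' consists of the pointers to \cite[Theorems 6.8 and 6.11 and Lemmas 6.6 and 6.9]{GoldbergSPN} given in the sentence just before the statement. Your proposal attempts an actual reconstruction, which is more ambitious, but it contains a gap that matters for how the lemma is subsequently used.

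The problem is in your argument for~(2). You write that you would ``use~(1) together with the fixed-point hypothesis'' to show that an element of $R_\sigma$ must be a product of full-orbit sign changes $\mathbf{c}^{\mathrm{full}}_\Omega$. But the closing parenthetical of the lemma explicitly says that $\sigma$ need \emph{not} be a fixed point for $W_\Theta$, and this is not a throwaway remark: later in \S\ref{so_split}, Proposition~\ref{cas_so2N} is proved by applying Lemma~\ref{cas_biz}(2)--(3) at the twisted representations $\sigma\otimes\chi$, which are typically \emph{not} fixed points of $W_\Theta$. A proof of~(2) that relies on the fixed-point hypothesis does not establish the statement as needed.

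There is a second, subtler issue in the same paragraph. You claim to rewrite a product $\mathbf{c}^{\mathrm{full}}_{\Omega_k}\mathbf{c}^{\mathrm{full}}_{\Omega_\ell}$ as a single double sign change $\mathbf{c}_i\mathbf{c}_j$ ``modulo elements of $W'_\sigma$.'' But $R_\sigma$, as defined in \S\ref{defs_rgroupe}, is the subgroup of $W_\sigma$ preserving a chosen positive system $\Delta'_+$; its elements are specific Weyl group elements, not cosets modulo $W'_\sigma$. If $\mathbf{c}^{\mathrm{full}}_{\Omega_k}\mathbf{c}^{\mathrm{full}}_{\Omega_\ell}$ is itself in $R_\sigma$, no reduction is possible or needed; if it is not, then the claim that $R_\sigma$ is generated by such products is already off, and saying it equals $\mathbf{c}_i\mathbf{c}_j$ ``modulo $W'_\sigma$'' requires exhibiting the correcting element of $W'_\sigma$, which is not obvious from the generators listed in~(3) (which are transpositions and twisted transpositions, not pure sign changes). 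To make this step rigorous one would have to follow Goldberg's actual root-system computation, which is what the paper defers to.

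Your treatment of~(1) and~(3) amounts to citing \cite[Lemma 6.6]{GoldbergSPN} and \cite[Lemma 6.2]{GoldbergSPN} respectively, which is fine and is essentially what the paper does. The general intuition you give for~(2) --- odd block sizes force an even number of sign changes, killing one dimension compared to the symplectic count --- is the right heuristic, but the proof as sketched would need to be redone from Goldberg's Theorems 6.8 and 6.11 and Lemma 6.9 without the fixed-point hypothesis, and the passage from full-orbit sign changes to the advertised double sign changes needs to be made precise.
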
 

\noindent  We can now discuss Definition \ref{good} and the adaptations to be made to the proof of Theorem~\ref{conclusion_inclu}. 

Concerning Definition \ref{good}, recall that $\sigma$ is a {good fixed point} when all weakly relevant orbits $\Omega$ are super-relevant singletons. Now, Lemma \ref{cas_biz}(1) means that weak relevance implies super-relevance, and is automatically satisfied by all pure type-I orbits (recall that it is \emph{not} satisfied by the other orbits). Therefore, the fact that $\sigma$ is  a good fixed point simply means here that if $\Omega$ is an orbit such that $\widecheck{\sigma_i} = \sigma_i$ for $i \in \Omega$, then $\Omega$ must be a singleton. 

We turn, now, to the proof of Theorem \ref{conclusion_inclu} in the present case. 

Assume that $\sigma = \sigma_1 \otimes \dots \otimes \sigma_r \otimes \tau$ satisfies the hypothesis of Lemma \ref{cas_biz}.  

Let us inspect the case where $\sigma$ is a good fixed point, \label{rq_comp} fixing\footnote{Note that the center of $\mathrm{SO}(2q,F)$ is compact for $q\neq1$: this ensures that any unramified character $\mathcal{X}_u(M)$ has trivial restriction to the block $\mathrm{SO}(2q,F)$. } $\chi = \chi_1 \otimes \dots \otimes \chi_r$ in $\mathcal{X}_u(M)$ and discussing the structure of $W'_{\sigma \otimes \chi}$ and $R_{\sigma \otimes \chi}$. 

Part (a) in Assumption \ref{hyp_inclusions} can then be checked quite simply from the description of $W'_{\sigma \otimes \chi}$ in Lemma \ref{cas_biz}(3). Indeed, if $i$ and $j$ are such that $n_i=n_j$ and $\sigma_i \chi_i \simeq \sigma_j \chi_j$, then we already know that $\sigma_i \simeq \sigma_j$ by the fixed-point property. Therefore, all transpositions $(ij)$ that lie in $W'_{\sigma \otimes \chi}$ must also lie in $W'_{\sigma}$. The same argument shows that all products $(ij)\mathbf{c}_i\mathbf{c}_j$ that lie in $W'_{\sigma \otimes \chi}$ must already lie in $W'_{\sigma}$. Now, using Lemma \ref{cas_biz}(3)  at $\sigma \otimes \chi$, we deduce that $W'_{\sigma \otimes \chi}$ is contained in $W'_{\sigma}$, proving part (a) in Assumption \ref{hyp_inclusions}. 

We now point out that for part (b), Lemma \ref{cas_biz}(2) means that it is enough for us to remark that we necessarily have $m(\sigma \otimes \chi) \leq m(\sigma)$. 

Consider, then, an integer $i \in \{1, \dots, r\}$ such that $\widecheck{\sigma_{i}} \simeq \sigma_i$. As before, we remark that the hypothesis that $\sigma$ is a fixed point for $W_{\Theta}$ means that we cannot have $\widecheck{\sigma_i \chi_i} \simeq \sigma_i \chi_i$ if we do not already have $\widecheck{\sigma_i} \simeq \sigma_i$. Furthermore, we notice that for $i,j \in \{1, \dots, r\}$ such that $\widecheck{\sigma_i} = \sigma_i$ and $\widecheck{\sigma_j} = \sigma_j$, we cannot have $\sigma_i \chi_i \simeq \sigma_j \chi_j$ unless we already have $\sigma_i \simeq \sigma_j$. We can therefore conclude that  $m(\sigma \otimes \chi) \leq m(\sigma)$, proving (b) and closing our discussion of the case of a good fixed point.

Finally, when $\sigma$ is a bad fixed point, the proof of the second part of Theorem \ref{conclusion_inclu} goes through, with additional simplifications: the case of a singleton (Case 2 on page \pageref{cas2}) can be excluded, and for the kind of non-singleton orbits that have to be considered in the proof, only case 1(b) on page \pageref{cas1b} can present itself. \qed

\subsubsection{Quasi-split but non-split even-orthogonal groups} We now consider the quasi-split but non-split group $G= \mathrm{SO}^\star(2n,F)$, and discuss the case of an arbitrary pair $(M, \sigma)$, where $M \simeq \mathrm{GL}(n_1,F) \times \dots \times \mathrm{GL}(n_r,F) \times \mathrm{SO}^\star(2q,F),$ and the discrete series representation $\sigma$ reads $\sigma = \sigma_1 \otimes \dots \otimes \sigma_r \otimes \tau$ as above. 
 
We take up the three cases of \S \ref{so_split}: 
\begin{enumerate}
\item[\emph{Case 1 :}]  \emph{$q>1$, and the representation $\tau$ of $\mathrm{SO}^\star(2q,F)$ satisfies $\widetilde{\tau} \simeq \tau$.}

\item[\emph{Case 2 :}]    \emph{$q=0$, or $\widetilde{\tau} \not\simeq \tau$.}

\item[\emph{Case 3 :}]    \emph{$q=1$.}

\end{enumerate} 

Using the results of \cite[Appendix]{ChoiyGoldberg}, the analysis we conducted for the split group  $\mathrm{SO}(2n,F)$ extends verbatim to cover Cases $1$ and $2$ for $\mathrm{SO}^\star(2n,F)$. The difference between $\mathrm{SO}^\star(2n,F)$ and $\mathrm{SO}^\star(2n,F)$ is now that in Case $3$,  the group $\mathrm{SO}^\star(2q,F)=\mathrm{SO}^\star(2,F)$ is compact (it is an anisotropic torus, see \cite[pp. 7-8]{Borel} and \cite{Prasad}). Therefore, it has no nontrivial unramified character.  This removes the only obstacle that prevented us, in the previous subsection, from treating Case $3$ together with Case $1$: so the proof of Theorem \ref{conclusion_inclu} given for symplectic groups extends to cover Cases $1$ and $3$. The analysis of \S \ref{so_split} covers Case $2$, and we conclude that Theorem \ref{conclusion_inclu} does hold for $G=\mathrm{SO}^\star(2n,F)$.

%
%

\providecommand{\bysame}{\leavevmode ---\ }
\providecommand{\og}{``}
\providecommand{\fg}{''}
\providecommand{\smfandname}{\&}
\providecommand{\smfedsname}{\'eds.}
\providecommand{\smfedname}{\'ed.}
\providecommand{\smfmastersthesisname}{M\'emoire}
\providecommand{\smfphdthesisname}{Th\`ese}


\begin{thebibliography}{10}

\bibitem{AATopo2}
{\scshape A.~{Afgoustidis} {\normalfont \smfandname} A.-M. {Aubert}} -- {\og
  {Continuity of the Mackey-Higson bijection}\fg},
  \emph{Pacific J. Math.}, \textbf{310} (2021), no.~2, p.~257--273. 

\bibitem{ArthurJFA}
{\scshape J.~Arthur} -- {\og Intertwining operators and residues. {I}.
  {W}eighted characters\fg}, \emph{J. Funct. Anal.} \textbf{84} (1989), no.~1,
  p.~19--84.

\bibitem{ArthurActa}
\bysame , {\og On elliptic tempered characters\fg}, \emph{Acta Math.}
  \textbf{171} (1993), no.~1, p.~73--138.

\bibitem{ABPSTakagi}
{\scshape A.-M. Aubert, P.~Baum, R.~Plymen {\normalfont \smfandname}
  M.~Solleveld} -- {\og Geometric structure in smooth dual and local
  {L}anglands conjecture\fg}, \emph{Jpn. J. Math.} \textbf{9} (2014), no.~2,
  p.~99--136.

\bibitem{ABPSOrsay}
\bysame , {\og Conjectures about {$p$}-adic groups and their noncommutative
  geometry\fg}, in \emph{Around {L}anglands correspondences}, Contemp. Math.,
  vol. 691, Amer. Math. Soc., Providence, RI, 2017, p.~15--51.

\bibitem{Borel}
{\scshape A.~Borel} -- {\og Linear algebraic groups\fg}, in \emph{Algebraic
  {G}roups and {D}iscontinuous {S}ubgroups ({P}roc. {S}ympos. {P}ure {M}ath.,
  {B}oulder, {C}olo., 1965)}, Amer. Math. Soc., Providence, R.I., 1966,
  p.~3--19.

\bibitem{BKLivre}
{\scshape C.~J. Bushnell {\normalfont \smfandname} P.~C. Kutzko} -- \emph{The
  admissible dual of {${\rm GL}(N)$} via compact open subgroups}, Annals of
  Mathematics Studies, vol. 129, Princeton University Press, Princeton, NJ,
  1993.

\bibitem{ChaoPlymen}
{\scshape K.~F. Chao {\normalfont \smfandname} R.~Plymen} -- {\og Geometric
  structure in the tempered dual of {$\rm SL(4)$}\fg}, \emph{Bull. Lond. Math.
  Soc.} \textbf{44} (2012), no.~3, p.~460--468.


\bibitem{ChoiyGoldberg}
{\scshape K.~Choiy {\normalfont \smfandname} D.~Goldberg} -- {\og Invariance of
  {$R$}-groups between {$p$}-adic inner forms of quasi-split classical
  groups\fg}, \emph{Trans. Amer. Math. Soc.} \textbf{368} (2016), no.~2,
  p.~1387--1410.

\bibitem{CCH}
{\scshape P.~Clare, T.~Crisp {\normalfont \smfandname} N.~Higson} -- {\og
  Parabolic induction and restriction via {$C^*$}-algebras and {H}ilbert
  {$C^*$}-modules\fg}, \emph{Compos. Math.} \textbf{152} (2016), no.~6,
  p.~1286--1318.
  

\bibitem{Connes}
{\scshape A. Connes} -- \emph{Noncommutative Geometry}, Academic Press, Inc., San Diego, CA, 
  1994.

  
\bibitem{Dieudonne}
{\scshape J.~A. Dieudonn\'{e}} -- \emph{La g\'{e}om\'{e}trie des groupes
  classiques}, Springer-Verlag, Berlin-New York, 1971, Troisi\`eme \'{e}dition,
  Ergebnisse der Mathematik und ihrer Grenzgebiete, Band 5.

\bibitem{Dixmier}
{\scshape J. Dixmier } -- \emph{{$C\sp*$}-algebras}, Translated from the French by Francis Jellett,
              North-Holland Mathematical Library, Vol. 15, North-Holland Publishing Co., Amsterdam-New York-Oxford, 1977.
 

\bibitem{EchterhoffEmerson}
{\scshape S.~Echterhoff {\normalfont \smfandname} H.~Emerson } -- {\og {Structure and {$K$}-theory of crossed products by proper
              actions}\fg}, \emph{Expo. Math.} \textbf{29} (2011), no.~3,
  p.~300--344.
  
\bibitem{GoldbergSLN}
{\scshape D.~Goldberg} -- {\og {$R$}-groups and elliptic representations for
  {${\rm SL}_n$}\fg}, \emph{Pacific J. Math.} \textbf{165} (1994), no.~1,
  p.~77--92.

\bibitem{GoldbergSPN}
\bysame , {\og Reducibility of induced representations for {${\rm Sp}(2n)$} and
  {${\rm SO}(n)$}\fg}, \emph{Amer. J. Math.} \textbf{116} (1994), no.~5,
  p.~1101--1151.

\bibitem{GoldbergUN}
\bysame , {\og {$R$}-groups and elliptic representations for unitary
  groups\fg}, \emph{Proc. Amer. Math. Soc.} \textbf{123} (1995), no.~4,
  p.~1267--1276.

\bibitem{GoldbergHerb}
{\scshape D.~Goldberg {\normalfont \smfandname} R.~Herb} -- {\og Some results
  on the admissible representations of non-connected reductive {$p$}-adic
  groups\fg}, \emph{Ann. Sci. \'{E}cole Norm. Sup. (4)} \textbf{30} (1997),
  no.~1, p.~97--146.

\bibitem{GoldbergShahidi}
{\scshape D.~Goldberg {\normalfont \smfandname} F.~Shahidi} -- {\og On the
  tempered spectrum of quasi-split classical groups\fg}, \emph{Duke Math. J.}
  \textbf{92} (1998), no.~2, p.~255--294.

\bibitem{JawdatPlymen}
{\scshape J.~Jawdat {\normalfont \smfandname} R.~Plymen} -- {\og {$R$}-groups
  and geometric structure in the representation theory of {${\rm SL}(N)$}\fg},
  \emph{J. Noncommut. Geom.} \textbf{4} (2010), no.~2, p.~265--279.

\bibitem{KamranPlymen}
{\scshape T.~Kamran {\normalfont \smfandname} R.~Plymen} -- {\og {$K$}-theory
  and the connection index\fg}, \emph{Bull. Lond. Math. Soc.} \textbf{45}
  (2013), no.~1, p.~111--119.

\bibitem{Keys}
{\scshape C.~D. Keys} -- {\og On the decomposition of reducible principal
  series representations of {$p$}-adic {C}hevalley groups\fg}, \emph{Pacific J.
  Math.} \textbf{101} (1982), no.~2, p.~351--388.

\bibitem{KnappStein}
{\scshape A.~W. Knapp {\normalfont \smfandname} E.~W. Stein} -- {\og
  Intertwining Operators for Semisimple Groups, \textsc{II}\fg}, \emph{Invent. Math.}
  \textbf{60} (1980), p.~9--84.
  
\bibitem{MorrisKutzko}
{\scshape P.~Kutzko {\normalfont \smfandname} L.~Morris} -- {\og Level zero
  {H}ecke algebras and parabolic induction: the {S}iegel case for split
  classical groups\fg}, \emph{Int. Math. Res. Not.} (2006), p.~Art. ID 97957,
  40.

\bibitem{Lance}
{\scshape E.~C. Lance} -- \emph{Hilbert {$C^*$}-modules}, London Mathematical
  Society Lecture Note Series, vol. 210, Cambridge University Press, Cambridge,
  1995, A toolkit for operator algebraists.


\bibitem{Langlands}
{\scshape R.~{Langlands}} -- {\og {Friday Morning {S}eminar on the {T}race
  {F}ormula}\fg}, \emph{unpublished} (1984).

\bibitem{NPW}
{\scshape G.~A. Niblo, R.~Plymen {\normalfont \smfandname} N.~Wright} -- {\og
  Stratified {L}anglands duality in the {$A_n$} tower\fg}, \emph{J. Noncommut.
  Geom.} \textbf{13} (2019), no.~1, p.~193--225.

\bibitem{OpdamSolleveld}
{\scshape E.~Opdam {\normalfont \smfandname} M.~Solleveld} -- {\og Extensions
  of tempered representations\fg}, \emph{Geom. Funct. Anal.} \textbf{23}
  (2013), no.~2, p.~664--714.

\bibitem{Plymen1}
{\scshape R.~J. Plymen} -- {\og Reduced {$C^*$}-algebra for reductive
  {$p$}-adic groups\fg}, \emph{J. Funct. Anal.} \textbf{88} (1990), no.~2,
  p.~251--266.

\bibitem{LeungPlymen}
{\scshape R.~J. Plymen {\normalfont \smfandname} C.~W. Leung} -- {\og
  Arithmetic aspect of operator algebras\fg}, \emph{Compositio Math.}
  \textbf{77} (1991), no.~3, p.~293--311.
  
  
\bibitem{Prasad}
{\scshape G.~Prasad} -- {\og Elementary proof of a theorem of
  {B}ruhat-{T}its-{R}ousseau and of a theorem of {T}its\fg}, \emph{Bull. Soc.
  Math. France} \textbf{110} (1982), no.~2, p.~197--202.

\bibitem{Prolla}
{\scshape J.~B. Prolla} -- {\og On the {W}eierstrass-{S}tone theorem\fg},
  \emph{J. Approx. Theory} \textbf{78} (1994), no.~3, p.~299--313.
  
\bibitem{Renard}
{\scshape D.~Renard} -- \emph{Repr\'{e}sentations des groupes r\'{e}ductifs
  {$p$}-adiques}, Cours Sp\'{e}cialis\'{e}s [Specialized Courses], vol.~17,
  Soci\'{e}t\'{e} Math\'{e}matique de France, Paris, 2010.

\bibitem{Rieffel}
{\scshape M.~A. Rieffel} -- {\og Morita equivalence for operator algebras\fg},
  in \emph{Operator algebras and applications, {P}art {I} ({K}ingston, {O}nt.,
  1980)}, Proc. Sympos. Pure Math., vol.~38, Amer. Math. Soc., Providence,
  R.I., 1982, p.~285--298.
  
\bibitem{Shahidi}
{\scshape F.~Shahidi} -- {\og A proof of {L}anglands' conjecture on
  {P}lancherel measures; complementary series for {$p$}-adic groups\fg},
  \emph{Ann. of Math. (2)} \textbf{132} (1990), no.~2, p.~273--330.

\bibitem{ShahidiDuke}
\bysame , {\og Twisted endoscopy and reducibility of induced representations
  for {$p$}-adic groups\fg}, \emph{Duke Math. J.} \textbf{66} (1992), no.~1,
  p.~1--41.

\bibitem{SilbergerKnappStein}
{\scshape A.~J. Silberger} -- {\og The {K}napp-{S}tein dimension theorem for
  {$p$}-adic groups\fg}, \emph{Proc. Amer. Math. Soc.} \textbf{68} (1978),
  no.~2, p.~243--246.

\bibitem{SilbergerRgroupe}
\bysame , {\og The {K}napp-{S}tein dimension theorem for {$p$}-adic groups\fg},
  \emph{Proc. Amer. Math. Soc.} \textbf{68} (1978), no.~2, p.~243--246.

\bibitem{Solleveld2}
{\scshape M.~Solleveld} -- {\og On the classification of irreducible
  representations of affine {H}ecke algebras with unequal parameters\fg},
  \emph{Represent. Theory} \textbf{16} (2012), p.~1--87.

\bibitem{Solleveld3}
\bysame , {\og Topological {K}-theory of affine {H}ecke algebras\fg},
  \emph{Ann. K-Theory} \textbf{3} (2018), no.~3, p.~395--460.

\bibitem{Solleveld1}
\bysame , {\og On completions of {H}ecke algebras\fg}, in \emph{Representations
  of reductive {$p$}-adic groups}, Progr. Math., vol. 328,
  Birkh\"{a}user/Springer, Singapore, 2019, p.~207--262.

\bibitem{Waldspurger}
{\scshape J.-L. Waldspurger} -- {\og La formule de {P}lancherel pour les
  groupes {$p$}-adiques (d'apr\`es {H}arish-{C}handra)\fg}, \emph{J. Inst.
  Math. Jussieu} \textbf{2} (2003), no.~2, p.~235--333.

\bibitem{WaldspurgerGGP2}
\bysame , {\og Une formule int\'{e}grale reli\'{e}e \`a la conjecture locale de
  {G}ross-{P}rasad, 2e partie: extension aux repr\'{e}sentations
  temp\'{e}r\'{e}es\fg}, Ast\'{e}risque, no. 346, 2012, Sur les conjectures de
  Gross et Prasad. I, p.~171--312.

\bibitem{Wassermann}
{\scshape A.~Wassermann} -- {\og Une d\'{e}monstration de la conjecture de
  {C}onnes-{K}asparov pour les groupes de {L}ie lin\'{e}aires connexes
  r\'{e}ductifs\fg}, \emph{C. R. Acad. Sci. Paris S\'{e}r. I Math.}
  \textbf{304} (1987), no.~18, p.~559--562.

\end{thebibliography}
\end{document}